\renewcommand{\BibLabel}{%
    \Hy@raisedlink{\hyper@anchorstart{cite.\CurrentBib}\hyper@anchorend}%
    [\thebib]%
}
\renewcommand{\PrintDOI}[1]{%
  \href{http://dx.doi.org/#1}{\texttt{DOI:#1}}%
}
\renewcommand{\eprint}[1]{#1}
\numberwithin{equation}{section}
\newtheorem{theorem}{Theorem}[section]
\newtheorem{lemma}[theorem]{Lemma}
\newtheorem{proposition}[theorem]{Proposition}
\newtheorem{corollary}[theorem]{Corollary}
\theoremstyle{definition}
\newtheorem{definition}[theorem]{Definition}
\theoremstyle{remark}
\newtheorem{rem}[theorem]{Remark}
\newtheorem{remark}[theorem]{Remark}
\newtheorem{example}[theorem]{Example}
\newcommand{\Z}{{\mathbb{Z}}}
\newcommand{\R}{{\mathbb{R}}}
\newcommand{\C}{{\mathbb{C}}}
\newcommand{\T}{{\mathbb{T}}}
\newcommand{\K}{\mathrm{K}} 
\newcommand{\norm}[1]{\left\| #1 \right\|}
\newcommand{\absv}[1]{\left| #1 \right|}
\newcommand{\Zyl}{\mathscr{Z}} 
\DeclareMathOperator{\HP}{HP}
\DeclareMathOperator{\KK}{\mathrm{KK}}
\DeclareMathOperator{\HH}{\mathrm{H}}
\DeclareMathOperator{\ev}{ev}
\DeclareMathOperator{\id}{id}
\title{\normalfont{Smooth Connes--Thom isomorphism, cyclic homology, and equivariant quantisation}} 
\author{Sayan Chakraborty}
\address{Department of Mathematics, Indian Institute of Science Education and Research Bhopal, 
\newline Bhopal Bypass Road, Bhopal, 462066, India, csayan@iiserb.ac.in}
\author{Xiang Tang}
\address{Department of Mathematics and Statistics, Washington University, St. Louis, MO, 63130, U.S.A., xtang@math.wustl.edu}
\author{Yi-Jun Yao} 
\address{School of Mathematical Sciences, Fudan University, Shanghai, China, 200433, \newline yaoyijun@fudan.edu.cn}
\begin{document}





\maketitle 


\begin{center}
Dedicated to Professor Alain Connes on the Occasion of his 70th Birthday
\end{center}

\begin{abstract}
Using a smooth version of the Connes--Thom isomorphism in Grensing's bivariant $K$-theory for locally convex algebras, we prove an equivariant version of the Connes--Thom isomorphism in periodic cyclic homology. As an application, we prove that periodic cyclic homology is invariant with respect to equivariant strict deformation quantization. 
\end{abstract}




\section{Introduction} \label{subsec:main-setup}
In this paper, we are interested in the following $C^*$-dynamical systems. Let $\R^n$ act strongly continuously on a C*-algebra $A$ by an action $\alpha$.  Also let $G$ be a finite group acting on $A$ by $\beta$. Let $\rho: G\to GL_n(\R)$ be a representation of $G$. As $G$ is finite, there is always a $G$-invariant inner product  on $\R^n$. Without loss of generality, we assume in this paper that the $G$ representation $\rho$ preserves the standard metric on $\R^n$.  We assume that the actions $\alpha$ and $\beta$ are compatible, i.e. 
\begin{equation}\label{eq:action}
\beta_g \alpha_x=\alpha_{\rho_g(x)}\beta_g,\ \ \text{for all}\ g\in G,\ x\in \R^n. 
\end{equation}
When $\rho$ is the trivial group homomorphism, the actions $\alpha$ 
and $\beta$  commute. The above condition really means that there is a strongly continuous action of the semi-direct product $\R^n\rtimes G$ on $A$.  With the above data, we consider the crossed product algebra $A\rtimes_\alpha \R^n$, $A\rtimes_\beta G$, and $(A\rtimes _\alpha \R^n) \rtimes _{\beta\rtimes \rho} G$. 

Connes established in \cite{MR605351} a far reaching generalization of the Thom isomorphism theorem in $K$-theory to noncommutative geometry,
\[
\K_\bullet(A)\cong \K_{\bullet+n}(A\rtimes_\alpha \R^n),
\]
with the $G$-equivariant version proved by Kasparov \cite{MR1388299}
\[
\K_\bullet(A\rtimes G) \cong \K_{\bullet} \Big( \big((A\otimes \mathbb{C}_n)\rtimes \R^n\big)\rtimes G\Big),
\]
where $\mathbb{C}_n$ is the complexified Clifford algebra associated to $\mathbb{R}^n$. This result quickly becomes one of the key tool in the study of noncommutative geometry, e.g. \cite{MR866491, MR1303779, MR1388299}. 

In cyclic homology, Elliott, Natsume, and Nest in \cite{MR945014} proved a cyclic version of the Connes--Thom isomorphism. Let $A^{\infty}$ be the smooth subalgebra of $A$ of smooth vectors for the action $\alpha$. Then $A^{\infty}$ is a locally convex algebra with respect to a system of seminorms $(p_i)$, which is induced from the norm of $A$. In the above setting, $G$ also acts on $A^{\infty}.$ So we consider the crossed product algebras $A^{\infty}\rtimes_\beta G$ and $(A^{\infty}\rtimes_{\alpha} \R^n) \rtimes G.$ Elliott, Natsume, and Nest \cite{MR945014}  proved the following identity in periodic cyclic homology. 
\[
\HP_\bullet(A^\infty)\cong \HP_{\bullet+n}(A^\infty\rtimes \R^n). 
\]

In this article, we prove an equivariant version of the cyclic Connes--Thom isomorphism. 
\begin{theorem}\label{thm:hp-connes-thom}
(Corollary \ref{cor:connes-thom-cyclic}) With the above notations and assumptions, we have
\begin{equation}\label{eq:cyclic}
\HP_\bullet\Big(\big((A^{\infty}\otimes\C_n)\rtimes_\alpha
\R^n\big)\rtimes G\Big)\cong \HP_\bullet(A^{\infty}\rtimes_{\beta}G ).
\end{equation}
\end{theorem}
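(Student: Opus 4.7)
My plan is to deduce the theorem from a smooth, $G$-equivariant refinement of the Connes--Thom element in Grensing's bivariant $\kk$-theory for locally convex algebras, exploiting the bivariant Chern character $\ch\colon \kk \to \HP$ which turns $\kk$-equivalences into $\HP$-isomorphisms. In this way the cyclic homology statement is reduced to a $\kk$-theoretic statement of exactly the same shape as Kasparov's equivariant $K$-theoretic Connes--Thom isomorphism.

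The first step is to construct, in a $G$-equivariant version of smooth $\kk$-theory, a Connes--Thom class
\[
t_{A^{\infty}} \;\in\; \kk^{G}\bigl((A^{\infty}\otimes \C_{n})\rtimes_{\alpha}\R^{n},\; A^{\infty}\bigr).
\]
The Clifford factor $\C_{n}$ is introduced precisely so that the $G$-action, induced by $\rho$ on $\R^{n}$ together with the associated orthogonal action on $\C_{n}$, matches equivariantly the original $\beta$-action on $A^{\infty}$; the compatibility \eqref{eq:action} ensures that $\R^{n}\rtimes G$ genuinely acts on $A^{\infty}$, so that every ingredient of the Connes--Thom construction --- Bott's Gaussian element, the dual action of $\widehat{\R^{n}}$, Takai duality --- can be realised at once smoothly and $G$-equivariantly. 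The hypotheses that $G$ is finite and that $\rho$ preserves the standard inner product on $\R^{n}$ are crucial for keeping all these ingredients within the smooth, $G$-equivariant category.

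The second step is descent under the finite group $G$: since $G$ is finite, the crossed product $(-)\rtimes G$ extends to a functor on smooth $\kk$ and carries $\kk^{G}$-equivalences to $\kk$-equivalences. Applied to $t_{A^{\infty}}$ this yields
\[
\bigl((A^{\infty}\otimes \C_{n})\rtimes_{\alpha}\R^{n}\bigr)\rtimes G \;\simeq_{\kk}\; A^{\infty}\rtimes_{\beta} G.
\]
Composing with the bivariant Chern character $\ch\colon \kk\to \HP$ provided by Grensing's formalism then gives \eqref{eq:cyclic}, completing the argument.

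The principal obstacle is verifying that the Connes--Thom element admits a genuinely $G$-equivariant smooth lift which is $\kk^{G}$-invertible. Connes' and Kasparov's proofs rely on $C^{*}$-analytic tools (dual actions, Takai duality, Kasparov products), and transporting these to locally convex algebras carrying an $\R^{n}\rtimes G$-action --- while retaining smoothness and, crucially, invertibility in Grensing's $\kk$-theory --- is the delicate point. Once the equivariant smooth Connes--Thom $\kk^{G}$-equivalence is in hand, the descent step and the application of the Chern character are essentially formal.
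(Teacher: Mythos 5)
Your high-level plan — build a $G$-equivariant smooth Connes--Thom $\kk$-class, descend along $(-)\rtimes G$, then apply a Chern character — is a reasonable roadmap, but as you yourself note at the end, the entire substance of the theorem lies in the step you defer: producing a genuinely $G$-equivariant, smooth, invertible Connes--Thom element. What you have written is essentially ``if we had a $\kk^G$-theory for locally convex algebras with a Connes--Thom equivalence, a descent functor, and a bivariant Chern character, the statement would follow formally.'' That reduction is correct, but none of those three ingredients is available off the shelf in the smooth setting, so the proposal has a genuine gap.

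What the paper does is also instructive as a contrast, because it deliberately \emph{avoids} the route you propose. The authors explicitly decline to set up a $G$-equivariant version of Grensing's bivariant $K$-theory (``to avoid the technical difficulty in defining an equivariant Kasparov module for locally convex algebras''). Instead they work directly at the level of crossed products by $G$: they construct a Dirac element $y^{\infty}_{n,\alpha}\in AKM\bigl(A^{\infty}\rtimes G,\ (B^{\infty}\rtimes_{\alpha}\R^n)\rtimes G\bigr)$ and a dual Dirac element $x^{\infty}_{n,\alpha}$ by hand, using Connes' pseudodifferential calculus for $\R^n$-actions to make sense of a symbol $\Sigma(\xi)=1\otimes\chi(c(\xi))$ and the multiplier $D^{\infty}_{\alpha}=\int\widehat{\Sigma}(x)V_x\,dx$; the $G$-equivariance of $D^{\infty}_{\alpha}$ and of the (smooth, $G$-equivariant) Takesaki--Takai duality is checked explicitly. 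Invertibility is proved not abstractly but by a deformation $\alpha^s$ of the action, naturality of the double-split extensions under the evaluation maps $\ev_s$, and reduction to equivariant Bott periodicity for the trivial action, which is handled via a $G$-index computation in an $R(G)$-module version of Grensing's index proposition. Finally, no bivariant Chern character is invoked: since $\HP_\bullet$ is itself a split-exact, diffotopy-invariant, $\mathcal{K}^{\infty}$-stable functor on locally convex algebras, the isomorphism statement proved for an arbitrary such functor $\HH$ specializes immediately to $\HP_\bullet$. To complete your proposal along the lines you sketch, you would need to carry out all of these constructions anyway, while also formalizing $\kk^G$, a descent functor, and a natural transformation $\ch\colon\kk\to\HP$ — more infrastructure than the problem requires.
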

The proof of Elliott, Natsume, and Nest \cite{MR945014} is by induction, which makes it impossible to generalize the proof therein to establish an equivariant Connes--Thom isomorphism in cyclic homology. A new approach different from \cite{MR945014} is in demand to prove Theorem \ref{thm:hp-connes-thom}.

In this paper our approach to Theorem \ref{thm:hp-connes-thom} is completely different from the method in \cite{MR945014}. We present a smooth version of the equivariant Connes--Thom isomorphism using Grensing's bivariant $K$-theory \cite{Grensing12} for locally convex algebras, from which Theorem \ref{thm:hp-connes-thom} follows as a corollary. 

\begin{theorem}\label{thm:equ-connes-thom}(Theorem \ref{thm:equ-thom}) With the assumptions in this section, we have the following equation,
\[
\HH\big(((A^\infty\otimes\C_n)\rtimes_\alpha
\R^n\big)\rtimes G)\cong \HH(A^\infty\rtimes_{\beta} G),
\]
where $\HH$ is a split-exact, diffotopy invariant, $\mathcal{K}^\infty$ stable functor on the category of locally convex algebras, and $\C_n$ is the complexified Clifford algebra associated with
$\R^n$, carrying the trivial action of $\R^n$ and the action of $G$ which is induced from the $G$ action on $\R^n.$
\end{theorem}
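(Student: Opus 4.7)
The strategy is to reduce the theorem to the construction of an invertible element in Grensing's bivariant $\kk$-theory. By the universal property of $\kk$ — namely, that every split-exact, diffotopy-invariant, $\mathcal{K}^\infty$-stable functor on locally convex algebras factors uniquely through $\kk$ — it is enough to produce an invertible class
\[
\tau \in \kk\big(A^\infty \rtimes_\beta G,\; ((A^\infty \otimes \C_n) \rtimes_\alpha \R^n) \rtimes G\big),
\]
and then apply $\HH$ to $\tau$ and its inverse.

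First, I would construct a smooth, non-equivariant Connes--Thom class. For an arbitrary locally convex $\R^n$-algebra $B^\infty$, one assembles $\tau_{B^\infty} \in \kk(B^\infty,\, (B^\infty \otimes \C_n)\rtimes_\alpha \R^n)$ from the Dirac element on $\R^n$ together with Clifford multiplication, adapting Kasparov's construction in KK-theory to Grensing's locally convex setting. Invertibility of $\tau_{B^\infty}$ in $\kk$ — the smooth analogue of the Connes--Thom isomorphism — would be verified either by building an explicit dual-Dirac/Bott class in $\kk((B^\infty \otimes \C_n)\rtimes_\alpha \R^n,\, B^\infty)$ and exhibiting smooth homotopies that realize the two compositions as identities, or by reducing via an external product to the scalar case $B^\infty = \C$, where smooth Bott periodicity in $\kk$ is known.

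Next I would use the compatibility \eqref{eq:action} together with the assumption that $\rho: G \to \GL_n(\R)$ preserves the standard metric on $\R^n$: these hypotheses make the Dirac element $G$-equivariant and intertwine the induced $G$-action on $\C_n$ with the Clifford symbol, so that $\tau_{A^\infty}$ carries a canonical $G$-equivariant structure. To descend to the crossed products with $G$, I would invoke that, for finite $G$, the functor $B^\infty \mapsto B^\infty \rtimes G$ on locally convex $G$-algebras preserves split-exactness (via averaging over $G$, which gives a splitting of $B^\infty \rtimes G \to B^\infty$ in the presence of a unit), diffotopy, and $\mathcal{K}^\infty$-stability. Hence $G$-equivariant $\kk$-classes descend to $\kk$-classes between crossed products, and invertibility is preserved, yielding $\tau$.

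The main obstacle is the construction and verification of invertibility of the smooth Connes--Thom class $\tau_{A^\infty}$ within Grensing's framework: while the KK-theoretic version is classical, transporting it to the locally convex category requires careful handling of the smoothness of the Clifford action, the choice of seminorms on $(A^\infty \otimes \C_n) \rtimes_\alpha \R^n$, and the explicit smooth homotopies that implement the Bott identity in $\kk$. Once this smooth (and $G$-equivariant) Connes--Thom class is in place, the descent to the crossed products with the finite group $G$ and the final deduction of the isomorphism for any functor $\HH$ as in the statement are essentially formal consequences of the universal property of $\kk$.
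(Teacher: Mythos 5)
Your high-level strategy --- produce an invertible smooth Connes--Thom class and apply the abstract functor --- is the right shape, but you have not identified the step that actually makes the isomorphism go through, and one of your two proposed routes to invertibility is incorrect.

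The fatal issue is the reduction to the trivial case. You suggest that invertibility might be proved ``by reducing via an external product to the scalar case $B^\infty=\C$, where smooth Bott periodicity is known.'' This cannot work when $\alpha$ is nontrivial: the entire content of the Connes--Thom isomorphism is that $(A^\infty\otimes\C_n)\rtimes_\alpha\R^n$ is \emph{not} an external product $A^\infty\otimes\big(\C_n\rtimes\R^n\big)$, so Bott periodicity does not propagate by tensoring. The correct reduction, which is the heart of Connes' and Elliott--Natsume--Nest's proofs and is the key step in the paper, is the \emph{rescaling of the action}: one considers the family $\alpha^s_x:=\alpha_{sx}$ for $s\in[0,1]$, interpolating between the given action at $s=1$ and the trivial action at $s=0$. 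One then forms the cylinder algebra $D^\infty=\Zyl A^\infty$ with the $\R^n$-action $\gamma_s(f)(x)=\alpha_{xs}(f(x))$, writes down double split extensions over this cylinder, and uses the naturality lemma for quasihomomorphisms together with the diffotopy invariance $\HH(\ev_s)=\HH(\ev_0)$ to conclude that $\HH(y_s)\circ\HH(x_s)$ and $\HH(x_s)\circ\HH(y_s)$ are independent of $s$. This reduces the product to the $s=0$ (trivial action) case, where one uses Bott periodicity --- but with $A^\infty$ still present, not replaced by $\C$. Your phrase ``exhibiting smooth homotopies that realize the two compositions as identities'' gestures at this but does not name the deformation of the action, which is the crucial mechanism.

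There are two further gaps. First, you propose to build the class ``$G$-equivariantly'' in some equivariant version of Grensing's $\kk$ and then descend to crossed products. The paper explicitly avoids developing an equivariant Grensing theory (this is flagged in the Preliminaries as a nontrivial technical obstruction) and instead constructs the Kasparov modules \emph{directly} at the level of $(\,\cdot\,)\rtimes G$, after first verifying $G$-invariance of the multiplier $D^\infty_\alpha$; the required generalizations of Grensing's index theorem and stabilization lemmas are worked out in an appendix. Second, you do not indicate how the Dirac element is to be constructed for a general strongly continuous $\R^n$-action on a locally convex algebra: the paper uses Connes' pseudodifferential calculus with noncommutative symbols, choosing the order-zero symbol $\Sigma(\xi)=1\otimes\chi(c(\xi))$ for a normalizing function $\chi$, and proves Schwartz estimates on $\partial\Sigma/\partial\xi_j$ to get well-defined multipliers and to verify the commutator conditions. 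It also uses a $G$-equivariant Takesaki--Takai duality to identify the double crossed product with $(A^\infty\otimes\C_{2n}\otimes\mathcal K^\infty)\rtimes G$, which is what makes the Dirac element land in the right algebra. Without these ingredients the proposal remains a program rather than a proof.
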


The key ingredient in our proof of the above Theorem \ref{thm:equ-connes-thom} is the pseudodifferential calculus for strongly continuous $\mathbb{R}^n$-actions which was introduced by Connes in \cite{connes80}. Using this pseudodifferential calculus with noncommutative symbols, we introduce a smooth version of the Dirac and dual Dirac operator for $\mathbb{R}^n$-actions, which lead us to  the desired isomorphism in Grensing's theory.  Another important ingredient in our proof is an equivariant Takesaki--Takai duality that there is a $G$-equivariant isomorphism between
$(A^\infty\rtimes_\alpha \R^n)\rtimes_{\widehat{\alpha}}\R_n$ and $A^\infty\otimes \mathcal{K}^\infty$, where $\R_n$ is the Pontryagin dual group with a natural action $A^\infty\rtimes_\alpha \R^n$ and $\mathcal{K}^\infty$ is the algebra of smoothing operators on $\R^n$. 

As an application of Theorem \ref{thm:hp-connes-thom}, we study the cyclic homology of equivariant strict deformation quantization introduced by Rieffel \cite{Rieffel93}.

Let $J$ be a real antisymmetric $n\times n$ matrix.  Define $GL_n(J)$ to be the group of invertible
matrices $g$ such that $g^t J g =J$, where $SL_n(\R, J):=SL_n(\R)\bigcap GL_n(J)$.  Assume that $\rho: G\rightarrow GL_n(\R)$ takes value in $SL_n(\R, J)$. Rieffel \cite{Rieffel93} constructed a strict deformation quantization $A^\infty_J$. It was checked in \cite{tang-yao:K-equi-quan} that the actions $\alpha$ and $\beta$ of $\mathbb{R}^n$ and $G$ extend to $A^\infty_J$ with the same properties. Using Theorem \ref{thm:hp-connes-thom}, we prove the following result about cyclic homology of $A^\infty_J\rtimes G$, which is the cyclic analog of the $K$-theory result obtained by the last two authors in \cite{tang-yao:K-equi-quan} (see also \cite{Ech10}). This result can also be viewed as an equivariant generalization of the homotopy invariance of periodic cyclic homology to locally convex algebras, c.f. \cites{MR823176, Getzler, Goodwillie, MR3669113}.
\begin{theorem}\label{thm:cyclic-quantization} (Corollary \ref{cor:cyclic-quantization})
	For a general deformation quantization $A_{J}^\infty$ defined by Equation (\ref{eq:star}), $\HP_\bullet (A_{J}^{\infty}\rtimes G)$ is independent of the $J$ parameter. 
\end{theorem}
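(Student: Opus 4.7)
The plan is to apply the equivariant cyclic Connes--Thom isomorphism (Theorem \ref{thm:hp-connes-thom}) to $A_J^\infty$ and to exploit the fact that, on the ``crossed product'' side of that isomorphism, Rieffel's deformation becomes trivial as a $G$-algebra. First, since the $\mathbb{R}^n\rtimes G$-action extends from $A^\infty$ to $A_J^\infty$ with the same compatibility \eqref{eq:action} (as recalled in the paragraph preceding the statement), Theorem \ref{thm:hp-connes-thom} applies verbatim to $A_J^\infty$ and yields
\[
\HP_\bullet(A_J^\infty\rtimes_\beta G)\;\cong\; \HP_\bullet\Big(\big((A_J^\infty\otimes \C_n)\rtimes_\alpha \mathbb{R}^n\big)\rtimes G\Big).
\]

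Second, I would establish a $G$-equivariant isomorphism of locally convex algebras
\[
(A_J^\infty\otimes \C_n)\rtimes_\alpha \mathbb{R}^n \;\cong\; (A^\infty\otimes \C_n)\rtimes_\alpha \mathbb{R}^n.
\]
At the $C^*$-level, this is Rieffel's observation \cite{Rieffel93} that the crossed product by the deforming $\mathbb{R}^n$-action absorbs the Moyal twist, realised by an explicit Fourier-type map that is defined directly on the smooth Schwartz crossed product. The hypothesis $\rho(G)\subseteq GL_n(J)$, i.e.\ $g^t J g=J$, is precisely what makes the $\beta$-action an automorphism of the deformed product $\times_J$, and consequently what forces this Fourier isomorphism to intertwine the two diagonal $G$-actions; tensoring with the Clifford factor $\C_n$ (trivial $\mathbb{R}^n$-action, induced $G$-action) preserves the equivariant isomorphism. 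Combining with the previous step, and with the $J=0$ specialisation of Theorem \ref{thm:hp-connes-thom}, yields
\[
\HP_\bullet(A_J^\infty\rtimes G)\;\cong\; \HP_\bullet\Big(\big((A^\infty\otimes\C_n)\rtimes_\alpha\mathbb{R}^n\big)\rtimes G\Big)\;\cong\; \HP_\bullet(A^\infty\rtimes G),
\]
and in particular the left-hand side is independent of $J$.

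The main obstacle is the $G$-equivariant smooth form of Rieffel's undeforming isomorphism in the middle step: one must lift his $C^*$-algebraic argument to the Fr\'echet setting of smooth Schwartz crossed products (checking continuity in the natural seminorms built from the $(p_i)$) and verify via the explicit Fourier formula that the identity $g^t J g=J$ is exactly what makes the isomorphism commute with the $\beta\rtimes\rho$-action. This technical core is essentially the one that already appears in the $K$-theoretic analogue \cite{tang-yao:K-equi-quan}, and I expect it to transpose to the periodic cyclic setting of the present paper with only notational changes, once the smooth Connes--Thom machinery of Theorem \ref{thm:equ-connes-thom} is in hand.
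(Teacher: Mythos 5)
Your proposal is correct, and the ``main obstacle'' you flag is not actually a gap: the $G$-equivariant smooth form of the undeforming isomorphism is exactly what the paper supplies in Propositions \ref{nesh:map} and \ref{chak:equivariant}. Neshveyev's map $\Theta_J\colon A_J^\infty\rtimes\R^n\to A^\infty\rtimes\R^n$ is already formulated on the Schwartz crossed product, and Proposition \ref{chak:equivariant} shows $\beta_g\circ\Theta_J=\Theta_J\circ\beta_g$ using precisely the identity $g^{-1}J(g^t)^{-1}=J$. Since $\R^n$ acts trivially on $\C_n$, one has $(A_J^\infty\otimes\C_n)\rtimes_\alpha\R^n\cong(A_J^\infty\rtimes_\alpha\R^n)\otimes\C_n$, so tensoring $\Theta_J$ with $\id_{\C_n}$ gives the $G$-equivariant isomorphism you want in your middle step, and sandwiching with Theorem \ref{thm:hp-connes-thom} applied to $A_J$ on one side and to $A$ on the other yields $\HP_\bullet(A_J^\infty\rtimes G)\cong\HP_\bullet(A^\infty\rtimes G)$.

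Where you genuinely diverge from the paper's own proof of Corollary \ref{cor:cyclic-quantization} is in how the Clifford factor is handled. The paper proceeds by way of the proof of Corollary \ref{cor:ktheoryoforbifold}: it undeforms at the level of the Clifford-free crossed product $(A_J^\infty\rtimes\R^n)\rtimes G$, then uses a $\operatorname{Spin}^c$ lift of $G\subseteq SO(n)$ (available because $G$ preserves a nondegenerate $J$, hence a symplectic, hence a complex, hence a $\operatorname{Spin}^c$ structure) to trade $(A\rtimes\R^n)\rtimes G$ for $((A\otimes\C_n)\rtimes\R^n)\rtimes G$ before invoking Connes--Thom; and for degenerate $J$ it reduces to the nondegenerate case via a $G$-invariant splitting $\R^n=V\oplus W$ with $J|_V=0$ and $J|_W$ nondegenerate. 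Your route, which carries the Clifford factor through and applies Theorem \ref{thm:hp-connes-thom} on both sides of $\Theta_J\otimes\id_{\C_n}$, sidesteps the $\operatorname{Spin}^c$ lifting argument and the nondegenerate/degenerate dichotomy entirely, assuming (as the paper's Proposition \ref{nesh:map} is stated) that Neshveyev's isomorphism holds for an arbitrary skew-symmetric $J$. This is a modest but real streamlining of the argument, and it hews more closely to the structure of the $K$-theoretic precursor in \cite{tang-yao:K-equi-quan} than the paper's own proof does.
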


This article is organised as follows. In Section \ref{sec:preliminary}, we recall the material about Grensing's bivariant $K$-theory of locally convex algebras. In Section \ref{sec:equ-takai}, we prove an equivariant Takesaki--Takai duality theorem. Theorem \ref{thm:hp-connes-thom} and \ref{thm:equ-connes-thom} are proved in Section \ref{sec:connes-thom}. Theorem \ref{thm:cyclic-quantization} together with examples are presented in Section \ref{sec:quantization}.  In Appendix \ref{sec:equi_Grensing}, we develop an equivariant version of Grensing's results \cite{Grensing12}, which is needed in our proofs. In Appendix \ref{sec:calculus}, we briefly recall Connes' pseudodifferential calculus for $\R^n$-actions, and some analytic properties about the symbol of the dual Dirac operator introduced in the proof of Theorem \ref{thm:equ-connes-thom} is discussed in Appendix \ref{app:schwarz}. \\

\noindent{\bf Acknowledgments}: This work was started when Chakraborty was a Ph.D. student at M\"unster, Germany. He would like to thank Joachim Cuntz and Siegfried Echterhoff for useful discussions. Tang's research is partially supported by NSF, and Yao's research is partially supported by NSFC. Part of this work is completed during Tang's visit of Fudan University and Shanghai Center of Mathematical Sciences. He would like to thank the two institutes for their warm hospitality hosting his visits.  
 
\section{Preliminaries}\label{sec:preliminary}
In this section, we briefly recall some preliminary material about the bivariant $K$-theory of (graded) locally convex algebras. We remark that analogous to the equivariant $KK$-theory \cite{MR1388299}, it may seem natural to introduce and work with a $G$-equivariant version of Grensing's bivariant $K$-theory. To avoid the technical difficulty in defining an equivariant Kasparov module for locally convex algebras, we have chosen to work with the nonequivariant version of bivariant $K$-theory  \cite{Grensing12} in this paper.   
\subsection{Locally convex algebra}
Recall that a locally convex algebra $\mathcal{A}$ is a complete locally convex space endowed with a continuous multiplication $\mathcal{A}\times \mathcal{A}\to \mathcal{A}$ such that for every continuous seminorm $\mathfrak{p}$ on $\mathcal{A}$ there is a continuous seminorm $\mathfrak{q}$ on $\mathcal{A}$ such that for all $a,b\in \mathcal{A}$,
\[
\mathfrak{p}(ab)\leq \mathfrak{q}(a)\mathfrak{q}(b).
\]
Through out the paper, we will always work with projective tensor product of locally convex algebras. By abusing notation, we will use the symbol $\mathcal{A}\otimes\mathcal{B}$ to denote the projective tensor product between two locally convex algebras $\mathcal{A}$ and $\mathcal{B}$.  When a locally convex algebra $\mathcal{A}$ is equipped with a grading operator $\epsilon\in  \operatorname{Aut}(\mathcal{A})$ such that $\epsilon^2=1$, we call $(\mathcal{A},\epsilon)$ a graded locally convex algebra. 

Let $\mathcal{A}$ be a (graded) locally convex algebra. We will use $\mathcal{S}(\mathbb{R}^n, \mathcal{A})$ to denote the locally convex space of Schwartz functions on $\mathbb{R}^n$ with value in $\mathcal{A}$. Suppose that $\mathbb{R}^n$, as an abelian group, acts smoothly isometrically on the algebra $\mathcal{A}$ by the group homomorphism $\alpha: \mathbb{R}^n\to \operatorname{Aut}(\mathcal{A})$. The following twisted convolution product $\ast$ makes $\mathcal{S}(\mathbb{R}^n, \mathcal{A})$ into a locally convex algebra, i.e. 
\[
(f * f')(x) = \int_{\mathbb{R}^n}f(y)\alpha_{y}(f'(x-y))dy,
\]
where $dy$ is the Lebesgue measure on $\mathbb{R}^n$.  This algebra will be denoted by $\mathcal{A}\rtimes \mathbb{R}^n$. 

An automorphism $u\colon \mathcal{A}\to \mathcal{A}$ is said to be \emph{almost isometric} if for all seminorms $\norm{\cdot}_\alpha,$ there exists a positive constant $C_\alpha$ such that 
\[
\norm{u(a)}_\alpha \le C_\alpha\norm{a}_\alpha, \quad \forall a\in \mathcal{A}.
\] 
We assume that $G$ is a finite group or a compact group that acts on $\mathcal{A}$ from the left by almost isometric automorphisms. And by a $G$-locally convex algebra, we mean a locally convex algebra equipped with an almost isometric $G$-action. 

Now suppose that a finite group $G$ acts on $\mathcal{A}$ from the left by almost isometric automorphisms. 
\begin{definition}[{cf.\ \cite[Sec.\ 7]{Perrot08}}]
	 We define the \emph{smooth crossed product} $\mathcal{A}\rtimes G$ as the completion of the vector space spanned by the elements $(a,g)\in \mathcal{A}\oplus G$ in the system of seminorms
	$$
		\norm{\sum_{g\in G}(a_g,g)}_{\alpha} = C_\alpha\sum_{g\in G} \norm{g^{-1}(a_g)}_\alpha,$$  
	 with multiplication given by
	$$
		(a,g)\cdot (a',g') = (ag(a'),gg'), \quad a,a'\in \mathcal{A}, \; g,g'\in G.
	$$
\end{definition}
It can be checked directly that the seminorms $\norm{\cdot}_{\alpha}$ make $\mathcal{A}\rtimes G$ into a locally convex algebra. Furthermore, if $\mathcal{A}$ is graded, then $\mathcal{A}\rtimes G$ is a graded locally convex algebra. Note that, for notational conveniences, we did not distinguish between seminorms of $\mathcal{A}\rtimes G$ and seminorms of $\mathcal{A}.$ 

\subsection{$KK$-theory for locally convex algebras}
In this subsection we briefly recall the smooth bivariant $K$-theory introduced by Grensing \cite{Grensing12}. 

\subsubsection{Abstract Kasparov module} 
 Let $\mathcal{A}$ and $\mathcal{B}$ be (graded) locally convex algebras. An abstract Kasparov module from $\mathcal{A}$ to $\mathcal{B}$ is given by a quadruple $(\alpha, \bar{\alpha}, U, \hat{\mathcal{B}})$, where $\hat{\mathcal{B}}$  is a (graded) algebra containing $\mathcal{B}$, and $U\in\hat{\mathcal{B}}$ is an invertible element, and $\alpha, \bar{\alpha}:\mathcal{A}\to\hat{\mathcal{B}}$  are two (degree 0) morphisms such that the map
$$
 \mathcal{A}\to \hat{\mathcal{B}};\; a\to \alpha(a)-U^{-1}\bar{\alpha}(a)U
$$
is a $\mathcal{B}$-valued continuous map.

\subsubsection{Locally convex Kasparov module} 
 Let $\mathcal{A}$ and $\mathcal{B}$ be (graded) locally convex algebras. A locally convex Kasparov module from $\mathcal{A}$ to $\mathcal{B}$ is given by a (graded) locally convex algebra $\hat{\mathcal{B}}$, whose grading is defined by an element $\epsilon\in \hat{\mathcal{B}}$ with $\epsilon^2=1$, containing $\mathcal{B}$ as a graded subalgebra, an odd element $F\in\hat{\mathcal{B}}$  and a (degree 0) map $\phi:\mathcal{A}\to\hat{\mathcal{B}}$ such that the maps
 \begin{itemize}
\item $\mathcal{B}\to \hat{\mathcal{B}};\; b\to bF,Fb, \epsilon b, b\epsilon$,
 \item  $\mathcal{A}\otimes \mathcal{B}\to \hat{\mathcal{B}};\;(a,b)\to \phi(a)b,b\phi(a)$,
 \item  $\mathcal{A}\to\hat{\mathcal{B}};\; a\to \phi(a)(1-F^2),(1-F^2)\phi(a)$, 
 \item  $\mathcal{A}\to\hat{\mathcal{B}};\; [\phi(a),F]$
\end{itemize}
are all $\mathcal{B}$-valued and continuous. We denote this locally convex Kasparov $(\mathcal{A},\mathcal{B})$ module by $(\hat{\mathcal{B}},\phi,F)$.

\subsubsection{Quasihomomorphism} 
 Let $\mathcal{A}$ and $\mathcal{B}$ be (graded) locally convex algebras. Let $\hat{\mathcal{B}}$ be a (graded) locally convex algebra containing $\mathcal{B}$. A quasihomomorphism from $\mathcal{A}$ to $\mathcal{B}$ is given by a pair of  (degree 0) maps $(\alpha, \bar{\alpha})$ from $\mathcal{A}$ to $\hat{\mathcal{B}}$ such that
 
  \begin{itemize}
\item $\mathcal{A}\to \hat{\mathcal{B}};\; a\to \alpha(a)-\hat{\alpha}(a)$,
 \item  $\mathcal{A}\otimes \mathcal{B}\to \hat{\mathcal{B}};\;(a,b)\to \alpha(a)b,b\alpha(a)$,

\end{itemize}
 are all $\mathcal{B}$-valued maps and continuous.
 
 Let $\mathcal{A}$, $\mathcal{B}$ be (graded) locally convex algebras. The \emph{cylinder algebra} is the subalgebra $\Zyl \mathcal{A}\subseteq C^\infty([0,1], \mathcal{A})$ consisting of $\mathcal{A}$-valued functions whose derivatives of order $\ge 1$ vanish at the endpoints. Since $\Zyl:=\Zyl\C$ is nuclear, therefore $\Zyl \mathcal{A} = \Zyl\otimes \mathcal{A}$. 

Two morphisms $\phi_0,\phi_1\colon \mathcal{A}\to \mathcal{B}$ are called \emph{diffeotopic} if there exists a (degree 0) morphism $\phi\colon \mathcal{A}\to \Zyl \mathcal{B}$ called \emph{diffotopy}, such that $\ev_0\circ \phi = \phi_0$, $\ev_1\circ\phi = \phi_1$. The relation of diffotopy is clearly an equivalence relation. 

If $\mathcal{A}$ carries an action $\alpha$ of $G$, then we define the action of $G$ on $\Zyl \mathcal{A}= \Zyl\otimes \mathcal{A}$  to be $\id \otimes \alpha.$ 


According to  \cite[Definition 18]{Grensing12}, if $\HH$ is a  split-exact functor on the category of locally convex algebras, a quasihomomorphism $(\alpha, \bar{\alpha})$ from $\mathcal{A}$ to $\mathcal{B}$ gives rise to a map from $\HH(\mathcal{A})$ to $\HH(\mathcal{B})$. 



An abstract Kasparov module from $\mathcal{A}$ to $\mathcal{B}$ gives rise to a quasihomomorphism. Suppose that we have an abstract Kasparov $\mathcal{A}$ to $\mathcal{B}$ module $x:=(\alpha, \bar{\alpha}, U, \hat{\mathcal{B}})$. This immediately gives a quasihomomorphism from $\mathcal{A}$ to $\mathcal{B}$ given by $Qh(x) : = (\alpha, U^{-1}\bar{\alpha}U)$. 
Also if we have a locally convex Kasparov $(\mathcal{A},\mathcal{B})$ module by $(\hat{\mathcal{B}},\phi,F)$, we can set elements $P_\epsilon : = \frac{1}{2}(1+\epsilon), P_\epsilon^\perp := 1- P_\epsilon$ and define an abstract Kasparov module from $\mathcal{A}$ to $\mathcal{B}$ which is given by the quadruple $(P_\epsilon\phi, P_\epsilon^\perp \phi, W_F, \hat{\mathcal{B}})$, where 

$$W_F := P_\epsilon F + P_\epsilon^\perp F(2-F^2) + \epsilon(1-F^2).$$ 

See \cite[Proposition 39]{Grensing12} for the details of the previous discussion. 

We will use the following results of Grensing from \cite{Grensing12}. We remark that Grensing \cite{Grensing12} worked with trivially graded locally convex algebras. However, it is not hard to see that all the developments \cite{Grensing12} naturally extend to graded locally convex algebras with the above definitions with essentially identical proofs. So we will use these results for graded locally convex algebras without detail proofs. In the following, by a morphism between graded algebras we always mean a graded morphism.    

\begin{proposition}(\cite[Proposition 22]{Grensing12})\label{quasirules} Let $(\alpha,\bar\alpha):\mathcal{A}\rightarrow \hat {\mathcal{B}}\trianglerighteq \mathcal{B}$ be a quasihomomorphism of (graded) locally convex algebras, $\HH$ a split exact functor  on the category of (graded) locally convex algebras with value in abelian groups. Then the following properties hold:
\begin{enumerate}
\item\label{quasiprecompose} $\HH((\alpha,\bar\alpha)\circ\phi)=\HH(\alpha,\bar\alpha)\circ \HH(\phi)$ for every morphism $\phi:\mathcal{C}\to \mathcal{A}$,
\item\label{quasipostcompose} $\HH(\psi\circ(\alpha,\bar\alpha))=\HH(\psi)\circ \HH(\alpha,\bar\alpha)$ for every homomorphism $\psi:\hat {\mathcal{B}}\to \hat {\mathcal{C}}$ of  locally convex algebras, such that there is a locally convex subalgebra  $\mathcal{C}\subseteq\hat {\mathcal{C}}$ such that $(\psi\circ\alpha,\psi\circ\bar\alpha):\mathcal{A}\rightarrow \hat {\mathcal{C}}\trianglerighteq {\mathcal{C}}$ is a quasihomomorphism,
\item\label{onemoreprop} if $(\beta,\bar\beta):\mathcal{A}'\rightarrow \hat{\mathcal{B}}'\trianglerighteq \mathcal{B}'$ is a quasihomomorphism, and $\phi:\mathcal{A}\to \mathcal{A}'$, $\psi:\mathcal{B}\to \mathcal{B}'$ are homomorphisms of locally convex algebras such that $\psi\circ (\alpha-\bar\alpha)=(\beta-\bar\beta)\circ \phi$, then
$$ \HH(\psi)\circ \HH(\alpha,\bar\alpha)=\HH(\beta,\bar\beta)\circ \HH(\phi),$$
\item\label{negativequasi} $\HH(\alpha,\bar\alpha)=-\HH(\bar\alpha,\alpha)$,
\item\label{orthosum} if $\alpha-\bar\alpha$ is a homomorphism orthogonal to $\bar\alpha$, then $\HH(\alpha,\bar\alpha)=\HH(\alpha-\bar\alpha)$.
\end{enumerate}
\end{proposition}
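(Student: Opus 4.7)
My plan is to realize $\HH(\alpha,\bar\alpha)$ as one component of $\HH(\mu)$ for a single genuine homomorphism $\mu$ into an auxiliary algebra sitting in a split short exact sequence, so that each of the five assertions reduces to functoriality and split-exactness.

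Given the quasihomomorphism $(\alpha,\bar\alpha)\colon \mathcal{A}\to\hat{\mathcal{B}}\trianglerighteq \mathcal{B}$, I would form the pullback algebra
\begin{equation*}
\mathcal{E}:=\mathcal{E}(\alpha,\bar\alpha)=\{(x,y)\in\hat{\mathcal{B}}\oplus\hat{\mathcal{B}}\,:\,x-y\in\mathcal{B}\},
\end{equation*}
which fits into the split short exact sequence $0\to\mathcal{B}\xrightarrow{\iota}\mathcal{E}\xrightarrow{\pi}\hat{\mathcal{B}}\to 0$ with $\iota(b)=(b,0)$, $\pi(x,y)=y$, and splitting $s(x)=(x,x)$. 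The continuity clauses built into the definition of a quasihomomorphism guarantee that $\mu(a):=(\alpha(a),\bar\alpha(a))$ is a well-defined continuous algebra homomorphism $\mathcal{A}\to\mathcal{E}$. Split-exactness of $\HH$ then yields a direct sum decomposition $\HH(\mathcal{E})=\HH(\iota)\HH(\mathcal{B})\oplus\HH(s)\HH(\hat{\mathcal{B}})$, and I set $\HH(\alpha,\bar\alpha):=p_{\mathcal{B}}\circ\HH(\mu)$, where $p_{\mathcal{B}}$ is the projection onto the $\HH(\mathcal{B})$-summand.

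Properties (1) and (2) are now immediate naturality statements. For (1), $\mu_{(\alpha\phi,\bar\alpha\phi)}=\mu_{(\alpha,\bar\alpha)}\circ\phi$, and $p_{\mathcal{B}}$ commutes with precomposition. For (2), the hypothesis ensures that $(x,y)\mapsto(\psi(x),\psi(y))$ sends $\mathcal{E}(\alpha,\bar\alpha)$ into $\mathcal{E}(\psi\alpha,\psi\bar\alpha)$ compatibly with $\iota$, $\pi$, and $s$, so the two projections commute with $\HH(\psi)$. For (3) the cleanest route is via Cuntz's equivalent description: a quasihomomorphism is the same datum as a homomorphism $q\mathcal{A}\to\mathcal{B}$, where $q\mathcal{A}$ is the kernel of the fold map $\mathcal{A}*\mathcal{A}\to\mathcal{A}$, and $\HH(\alpha,\bar\alpha)$ factors as $\HH(q\alpha)\circ\kappa$ with $\kappa\colon\HH(\mathcal{A})\to\HH(q\mathcal{A})$ natural; the hypothesis $\psi(\alpha-\bar\alpha)=(\beta-\bar\beta)\phi$ reads precisely as $\psi\circ q\alpha=q\beta\circ q\phi$, and applying $\HH$ together with naturality of $\kappa$ delivers the claim.

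For (4), the flip $\sigma(x,y)=(y,x)$ intertwines $\mu_{(\alpha,\bar\alpha)}$ with $\mu_{(\bar\alpha,\alpha)}$. A standard argument (cleanest in Cuntz's setup, where the analogous flip restricts to an automorphism of $q\mathcal{A}$ inducing $-\id$ on $\HH(q\mathcal{A})$ by split-exactness) shows that $\HH(\sigma)$ acts as $-\id$ on the $\HH(\mathcal{B})$-summand of the splitting, giving $\HH(\bar\alpha,\alpha)=-\HH(\alpha,\bar\alpha)$. For (5), if $\gamma:=\alpha-\bar\alpha$ is a homomorphism orthogonal to $\bar\alpha$, then $\mu=\iota\circ\gamma+s\circ\bar\alpha$ is a sum of two orthogonal homomorphisms $\mathcal{A}\to\mathcal{E}$ (the orthogonality condition is exactly what makes the sum a homomorphism into $\mathcal{E}$); additivity of $\HH$ on orthogonal homomorphisms, itself a consequence of split-exactness, yields $\HH(\mu)=\HH(\iota)\HH(\gamma)+\HH(s)\HH(\bar\alpha)$, and applying $p_{\mathcal{B}}$ kills the second term, leaving $\HH(\alpha-\bar\alpha)$.

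The main obstacle I anticipate lies not in any of these five formal manipulations but in the preliminary checks: that $\mathcal{E}(\alpha,\bar\alpha)$ genuinely carries a locally convex algebra topology making $\mu$, $\iota$, $\pi$, and $s$ continuous with a continuous splitting, and that the construction is compatible with the graded enhancement (so $s$ is degree-preserving whenever $\hat{\mathcal{B}}$ is graded and $\mathcal{B}$ is a graded subalgebra). These topological and grading points follow from the continuity hypotheses built into the definition of a quasihomomorphism, but must be handled carefully before the diagram chases above go through.
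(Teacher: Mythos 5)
Your approach builds everything on the double algebra $\mathcal{E}(\alpha,\bar\alpha)=\{(x,y)\in\hat{\mathcal{B}}\oplus\hat{\mathcal{B}}: x-y\in\mathcal{B}\}$ with the split exact sequence $0\to\mathcal{B}\to\mathcal{E}\to\hat{\mathcal{B}}\to 0$. This is genuinely different from what Grensing (and this paper, in the paragraph following the proposition) does: there the auxiliary algebra is the twisted direct sum $\mathcal{D}_\alpha:=\mathcal{A}\oplus\mathcal{B}$ with multiplication $(a,b)(a',b')=(aa',\alpha(a)b'+b\alpha(a')+bb')$, sitting in the split exact sequence $0\to\mathcal{B}\to\mathcal{D}_\alpha\to\mathcal{A}\to 0$ with \emph{two} sections $\alpha'=\id_\mathcal{A}\oplus 0$ and $\bar\alpha'=\id_\mathcal{A}\oplus(\bar\alpha-\alpha)$, and $\HH(\alpha,\bar\alpha)$ is extracted from the difference of those two sections.

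There is a real gap in your version. In Grensing's framework, $\trianglerighteq$ means only that $\mathcal{B}$ is a locally convex \emph{subalgebra} of $\hat{\mathcal{B}}$; the defining conditions of a quasihomomorphism supply merely that $\alpha-\bar\alpha$, $\alpha(\cdot)\mathcal{B}$ and $\mathcal{B}\,\alpha(\cdot)$ take values in $\mathcal{B}$. Nothing forces $\mathcal{B}$ to be a (two-sided) ideal in $\hat{\mathcal{B}}$. But closure of your $\mathcal{E}$ under multiplication requires exactly that: for $(x_i,y_i)\in\mathcal{E}$ with $b_i=x_i-y_i\in\mathcal{B}$ one computes $x_1x_2-y_1y_2=y_1b_2+b_1y_2+b_1b_2$, so to conclude this lies in $\mathcal{B}$ you need $\hat{\mathcal{B}}\cdot\mathcal{B}+\mathcal{B}\cdot\hat{\mathcal{B}}\subseteq\mathcal{B}$. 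When $\mathcal{B}$ is merely a subalgebra, $\mathcal{E}$ is not an algebra at all, the split exact sequence $0\to\mathcal{B}\to\mathcal{E}\to\hat{\mathcal{B}}\to 0$ does not exist, and the homomorphism $\mu$ has no target. Cutting $\mathcal{E}$ down to the subalgebra generated by $\mu(\mathcal{A})$ and $\iota(\mathcal{B})$ does give an algebra (this is essentially the image of $\mathcal{D}_\alpha$ under $(a,b)\mapsto(\alpha(a)+b,\bar\alpha(a))$), but its quotient is $\mathcal{A}$, not $\hat{\mathcal{B}}$, so your decomposition of $\HH(\mathcal{E})$ and the projection $p_{\mathcal{B}}$ no longer make sense as written. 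This is precisely why Grensing works with $\mathcal{D}_\alpha$: its twisted multiplication is built from exactly the data the quasihomomorphism provides, with no ideal assumption.

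Your sketches of items (1)--(5) are structurally sound once a correct auxiliary algebra and split exact sequence are in place, and the appeal to the $q\mathcal{A}$ picture for (3) and the flip argument for (4) are the standard moves. But the foundation must be $\mathcal{D}_\alpha$ (or the universal $q\mathcal{A}$ construction), not the naive double $\mathcal{E}$, unless you additionally assume $\mathcal{B}\trianglelefteq\hat{\mathcal{B}}$. You anticipated trouble only in the topological and grading checks; the more fundamental issue is algebraic.
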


If $(\alpha,\bar\alpha):\mathcal{A}\rightarrow \hat {\mathcal{B}}\trianglerighteq \mathcal{B}$ is a quasihomomorphism of (graded) locally convex algebras, the sum $\mathcal{D}:=\mathcal{A}\oplus {\mathcal{B}}$, equipped with the multiplication 
$$(a,b)(a',b'):=(aa',\alpha(a)b'+b\alpha(a')+bb')$$
is a (graded) locally convex algebra $\mathcal{D}_\alpha$\index{$\mathcal{D}_\alpha$}. ${\mathcal{B}}$ sits in $\mathcal{D}_\alpha$ by the inclusion $\iota_{\mathcal{B}}:{\mathcal{B}}\to \mathcal{D}_\alpha,\; b\mapsto (0,b)$, and $\mathcal{A}$ is a quotient of $\mathcal{D}_\alpha$.
Also we have the following split exact sequence
\[\xymatrix{ 0\ar[r]&{\mathcal{B}}\ar[r]&\mathcal{D}_\alpha\ar[r]&\mathcal{A}\ar[r]&0,}\]
with split $\alpha':=\id_{\mathcal{A}}\oplus 0$.  Note that there is another split $\bar{\alpha}':= \id_{\mathcal{A}}\oplus (\bar\alpha-\alpha).$ By abuse of notations, we denote them by $\alpha$ and $\bar{\alpha}$ again, respectively.

\begin{proposition}\cite[Lemma 19]{Grensing12}\label{morphofdoublesplit} If $(\phi_1,\phi_2,\phi_3)$ is a morphism of double split extensions, i.e.:
\begin{equation}\label{homologycommutes}\xymatrix{ 0\ar[r]&{\mathcal{B}}\ar[d]_{\phi_1}\ar[r]^\iota &\mathcal{D}_\alpha\ar[d]_{\phi_2}\ar[r]|{\,\,\pi\,}&{\mathcal{A}}\ar@/_.3cm/[l]_{\alpha}\ar@/^.3cm/[l]^{\bar\alpha}\ar[d]^{\phi_3}\ar[r]&0\\
0\ar[r]&{\mathcal{B}}'\ar[r]^{\iota'}&\mathcal{D}'_\beta \ar[r]|{\,\,\pi'\,}&{\mathcal{A}}'\ar@/_.3cm/[l]_{\beta}\ar@/^.3cm/[l]^{\bar\beta}\ar[r]&0
}\end{equation}
which commutes in the usual sense satisfying $\phi_2\circ\alpha=\beta\circ\phi_3$, $\phi_2\circ\bar\alpha=\bar\beta\circ\phi_3$, then for every split exact functor $\HH$
$$\HH(\phi_1)\circ \HH(\alpha,\bar\alpha)=\HH(\beta,\bar\beta)\circ \HH(\phi_3).$$
\end{proposition}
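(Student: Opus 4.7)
The plan is to reduce the statement to a short diagram chase after applying $\HH$ to the entire diagram (\ref{homologycommutes}). The first step is to make explicit the construction of $\HH(\alpha,\bar\alpha)$ used just before the statement: from the split exact sequence $0 \to \mathcal{B} \xrightarrow{\iota} \mathcal{D}_\alpha \xrightarrow{\pi} \mathcal{A} \to 0$ with two splittings $\alpha, \bar\alpha$, split-exactness of $\HH$ makes $\HH(\iota)$ a split monomorphism, and in particular injective. The difference $\HH(\alpha) - \HH(\bar\alpha):\HH(\mathcal{A}) \to \HH(\mathcal{D}_\alpha)$ is annihilated by $\HH(\pi)$, so it factors uniquely through $\HH(\iota)$, and up to the sign convention of the paper the factor is precisely $\HH(\alpha,\bar\alpha)$, i.e.\
\[
\HH(\iota) \circ \HH(\alpha,\bar\alpha) = \HH(\alpha) - \HH(\bar\alpha),
\]
and similarly $\HH(\iota') \circ \HH(\beta,\bar\beta) = \HH(\beta) - \HH(\bar\beta)$ for the bottom row.

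Next I would apply functoriality of $\HH$ to the hypotheses $\phi_2\circ\iota = \iota'\circ\phi_1$, $\phi_2\circ\alpha = \beta\circ\phi_3$, and $\phi_2\circ\bar\alpha = \bar\beta\circ\phi_3$, obtaining
\[
\HH(\phi_2) \circ \HH(\iota) = \HH(\iota') \circ \HH(\phi_1), \qquad \HH(\phi_2) \circ (\HH(\alpha) - \HH(\bar\alpha)) = (\HH(\beta) - \HH(\bar\beta)) \circ \HH(\phi_3).
\]
Composing on the left with $\HH(\phi_2)$ in the defining identity for $\HH(\alpha,\bar\alpha)$ and then substituting, I obtain
\[
\HH(\iota')\circ\HH(\phi_1)\circ\HH(\alpha,\bar\alpha) = \HH(\phi_2)\circ(\HH(\alpha)-\HH(\bar\alpha)) = (\HH(\beta)-\HH(\bar\beta))\circ\HH(\phi_3) = \HH(\iota')\circ\HH(\beta,\bar\beta)\circ\HH(\phi_3).
\]
Injectivity of $\HH(\iota')$ then cancels it from the leftmost and rightmost terms, giving the desired equality.

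I do not expect any substantial obstacle: the argument is a purely formal diagram chase in the target category of $\HH$. The only subtle point is to verify that the factorisation description of $\HH(\alpha,\bar\alpha)$ through $\HH(\iota)$ genuinely coincides with the map attached to the quasihomomorphism by Grensing's Definition 18, but this is built into the construction of $\mathcal{D}_\alpha$ and is already the key content of Grensing's original scalar Lemma 19. The extension to graded locally convex algebras is painless, since the grading enters only through the requirement that all morphisms in the diagram respect it, which is assumed throughout.
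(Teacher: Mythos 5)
Your proof is correct and is essentially the standard Cuntz-style argument that Grensing's Lemma 19 (which the paper cites without reproving) is based on: characterise $\HH(\alpha,\bar\alpha)$ via the factorisation $\HH(\iota)\circ\HH(\alpha,\bar\alpha)=\HH(\alpha)-\HH(\bar\alpha)$, push the commuting squares through $\HH$, and cancel the split monomorphism $\HH(\iota')$. The one point you correctly flag as requiring care, namely that the factorisation through $\HH(\iota)$ is precisely Grensing's Definition 18, is indeed built in, so the argument closes.
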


\subsection{Notations}
In this paper, we shall denote (graded) $C^*$-algebras by $A, B, \cdots,$ and (graded) locally convex algebras by $\mathcal{A}, \mathcal{B}, \cdots$. If $A$ is equipped with a strongly continuous $\mathbb{R}^n$ action, we denote the subalgebra of $A$ of smooth vectors with respect to an $\mathbb{R}^n$-action by $A^\infty$, which is a locally convex algebra.  $G$ will always be a finite group. All the tensor products we use are projective tensor products of (graded) locally convex algebras. For  the convenience to readers, we give a short list of notations:

\begin{itemize}
\item $e(s)$ denotes the number $e^{2 \pi i s}.$
    \item $\mathcal{K}^\infty$ denotes the (trivially graded) locally convex algebra of smooth compact operators (smoothing operators on $\R^n$), and $\mathcal{K},$ the C*-algebra of compact operators. 
 \item $\HH$ is a split-exact, diffotopy invariant, $\mathcal{K}^\infty$ stable functor on the category of (graded) locally convex algebras.  
  \item $AKM(\mathcal{A},\mathcal{B})$ denotes the set of locally convex Kasparov modules between (graded) locally convex algebras $\mathcal{A}$ and $\mathcal{B}$.    
  \item $\KK^G(A,B)$ denotes the usual equivariant Kasparov group for the C*-algebras $A$ and $B$. We denote the non-equivariant one  by $\KK(A,B).$ 
  \item $QH(\mathcal{A},\mathcal{B})$ denotes the quasihomomorphim group between $\mathcal{A}$ and $\mathcal{B}.$  $Qh(x)$ denotes the quasi-homomorphism group associated to the locally convex Kasparov module $x$.
  \item If $x\in QH(\mathcal{A},\mathcal{B})$, then $\HH(x)$ denotes the map from $\HH(\mathcal{A})$ to $\HH(\mathcal{B})$.
 \end{itemize}
 
Let $R(G)$ be the representation ring of the group $G$. We will introduce an $R(G)$-module structure on $\HH(\mathcal{K}^\infty\rtimes G)$ in Appendix \ref{sec:equi_Grensing} for a diffotopy invariant, split exact, $\mathcal{K}^\infty$ stable functor $\HH$. We will need the following $G$-equivariant variation of Grensing's result in the study of the Connes--Thom isomorphism.  

Recall that $G$ acts on $\mathbb{R}^n$ and therefore acts on $\mathcal{K}^\infty$. Let  $(\mathcal{H}, \varphi, \tilde{F})$ be a $G$-equivariant Kasparov $(\C, \mathcal{K})$ module for $C^*$-algebras. We assume that $(\mathbb{B}(\mathcal{H}), \varphi, \tilde{F})$ defines a locally convex Kasparov $(\C, \mathcal{K}^\infty)$ module.   By the descent construction in equivariant $KK$-theory, $(\mathcal{H}, \varphi, \tilde{F})$ gives rise to a Kasparov $(\C G, \mathcal{K}\rtimes G)$ module for $C^*$-algebras in the following way.  Endow $C(G, \mathcal{H})$ , which we denote by $\mathcal{H}\rtimes G$, with the following operations:
\[
\begin{split}
\langle x, y \rangle _{\mathcal{K}\rtimes G}(t):= &\sum_G\beta_{s^{-1}}\big(\langle x(s),y(st)\rangle _{\mathcal{K}}\big),\\
(x \cdot \lambda)(t):= &\sum_G x(s)\beta_s\big(\lambda(s^{-1}t)\big),
\end{split}
\]
for $x,y\in C(G, \mathcal{H}):=\mathcal{H}\rtimes G$, and $\lambda\in \mathcal{K}\rtimes G$. Define $F$ on $C(G, \mathcal{H})$ by $F(x)(t)=\tilde{F}(x(t))$, and $\phi:\C G\to \mathbb{B}(\mathcal{H}\rtimes G)$ by
\[
\big(\phi(f)x\big)(t):=\sum_s\varphi\big(f(s)\big)\gamma_s\big(x(s^{-1}t)\big),
\]
for $f\in \C G$, $x\in \mathcal{H}\rtimes G$. With the assumption that $(\mathbb{B}(\mathcal{H}), \varphi, \tilde{F})$ is a locally convex Kasparov $(\C, \mathcal{K}^\infty)$ module, it is straightforward to check  that the triple $\big(\mathbb{B}(\mathcal{H}\rtimes G), \phi,  F \big)$ defines a locally convex Kasparov $(\C G , \mathcal{K}^\infty\rtimes G )$ module.  We have the following property generalizing \cite[Proposition 45]{Grensing12}.
\begin{proposition}\cite[Proposition 45]{Grensing12}\label{prop:G-index}
 	Let $x=(\mathbb{B}(\mathcal{H}\rtimes G), \phi, F)$ be a locally convex Kasparov $(\C G , \mathcal{K}^\infty\rtimes G )$ module defined above from a $G$-equivariant Kasparov module $(\mathcal{H}, \varphi, \tilde{F})$ for $C^*$-algebras $(\C, \mathcal{K})$, and $\HH$ be a diffotopy invariant, split exact, $\mathcal{K}^\infty$ stable functor. Then  $\HH(Qh(x)) = \operatorname{index}_G \tilde{F} \circ \theta_*$, where $\theta_* : \HH(\C G) \rightarrow \HH(\mathcal{K}^\infty\rtimes G)$ denotes the $G$-stabilisation map, and $\operatorname{index}_G \tilde{F}$ is the $G$-index of the operator $\tilde{F}$ which is an element of $R(G).$
 \end{proposition}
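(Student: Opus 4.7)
The plan is to follow the strategy of Grensing's proof of the non-equivariant result \cite[Proposition 45]{Grensing12}, tracking $G$-equivariance throughout, and then to invoke the $R(G)$-module structure on $\HH(\mathcal{K}^\infty\rtimes G)$ that will be constructed in Appendix~\ref{sec:equi_Grensing}. The underlying principle is that a $G$-equivariant Kasparov module $(\mathcal{H},\varphi,\tilde{F})$ for $(\C,\mathcal{K})$ represents an element of $\KK^G(\C,\C)\cong R(G)$, and one expects $\HH$ to detect exactly this $G$-index after the $G$-stabilisation $\theta_*$.

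First I would use $G$-equivariant operator homotopy to normalise $(\mathcal{H},\varphi,\tilde{F})$ so that $\tilde{F}$ is a $G$-equivariant Fredholm operator whose kernel $V_+$ and cokernel $V_-$ are finite dimensional $G$-invariant subspaces, with $\operatorname{index}_G \tilde{F} = [V_+]-[V_-]\in R(G)$. It is essential that this homotopy be realised at the level of locally convex Kasparov $(\C,\mathcal{K}^\infty)$ modules, so that applying the descent construction produces a diffotopy of locally convex Kasparov $(\C G,\mathcal{K}^\infty\rtimes G)$ modules; diffotopy invariance of $\HH$ then reduces the problem to computing $\HH(Qh(x))$ in this normal form.

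Next I would decompose $\mathcal{H}=V\oplus\mathcal{H}'$ with $V=V_+\oplus V_-$ and $\tilde{F}|_{\mathcal{H}'}$ invertible, and split the descended triple accordingly. The piece supported on $\mathcal{H}'$ is degenerate, and by the orthogonality and sign properties of quasihomomorphisms collected in Proposition~\ref{quasirules} it contributes zero to $\HH(Qh(x))$. The surviving contribution, attached to the finite dimensional graded $G$-representation $V$, factors through a homomorphism $\C G\to \End(V)\rtimes G\hookrightarrow \mathcal{K}^\infty\rtimes G$; by construction this homomorphism implements precisely the composition of the stabilisation $\theta_*\colon \HH(\C G)\to \HH(\mathcal{K}^\infty\rtimes G)$ with multiplication by $[V_+]-[V_-]$ in the $R(G)$-module structure, yielding the stated formula.

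The main obstacle I anticipate is the compatibility check between the abstract $R(G)$-action defined in Appendix~\ref{sec:equi_Grensing} and the concrete action obtained from a finite dimensional $G$-representation $V$ via the inclusion $\End(V)\rtimes G \hookrightarrow \mathcal{K}^\infty\rtimes G$; verifying that these agree, and that both intertwine correctly with the descent functor, is where most of the technical work will be located. A secondary subtlety is to ensure that the Kasparov-theoretic normalisation in the first step can be carried out within locally convex (rather than merely $C^*$-algebraic) Kasparov modules, which is precisely what the hypothesis that $(\mathbb{B}(\mathcal{H}),\varphi,\tilde{F})$ itself defines a locally convex Kasparov $(\C,\mathcal{K}^\infty)$ module is designed to guarantee.
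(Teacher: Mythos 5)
Your proposal follows essentially the same strategy as the paper: reduce to Grensing's non-equivariant Proposition~45, track $G$-equivariance throughout, and relate the result to the $R(G)$-module structure on $\HH(\mathcal{K}^\infty\rtimes G)$ constructed in Appendix~\ref{sec:equi_Grensing}. The one point where the paper is more explicit than your sketch is precisely the ``main obstacle'' you flag. You propose to show directly that the map $\C G\to \End(V)\rtimes G\hookrightarrow \mathcal{K}^\infty\rtimes G$ realises $[V_+]-[V_-]$ acting on $\theta_*$; the paper instead decomposes the cokernel into $G$-irreducibles, treats a single irreducible $\chi$ of dimension $l$ at a time, and then notes two facts: (i) because $G$ is finite there is a $G$-invariant rank-one projection in $\mathcal{K}^\infty$, so the stabilisation $\theta_*$ itself is $G$-compatible, and (ii) one can therefore apply the element $[\chi]$ of $R(G)$ to reduce to the case where the $G$-action on $\C^l$ is trivial, at which point Grensing's non-equivariant identity $\HH(1-TT')=-l\,\theta_*$ applies verbatim. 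This two-step reduction (irreducible decomposition, then twisting by $[\chi]$ against a $G$-fixed projection) is precisely the device that discharges the compatibility check you anticipate; without it your final ``by construction'' step is an assertion rather than an argument. So your route is correct in outline, but you should be aware that the bridge between the concrete finite-dimensional representation $V$ and the abstract $R(G)$-action is not immediate and is where the paper, like you, concentrates the real work.
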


The proof of the above proposition is presented in Appendix \ref{sec:equi_Grensing}, Proposition \ref{prop:index}. 

\section{The equivariant Takesaki--Takai duality theorem} \label{sec:equ-takai}

We consider a $C^*$-dynamical system $(A, \mathbb{R}^n, \alpha)$. Let $\R_n$ be the Pontryagin dual group of $\R^n$ as an abelian group. We observe that there is a dual action $\hat{\alpha}$ of $\R_n$ on  $A^\infty \rtimes_\alpha \R^n$ given by 
 
 $$ \hat{\alpha}_x(f)(s) = e(\langle x ,s\rangle)f(s).$$
 
 The action of $G$ on a smoothing operator (viewed as an operator on $L^2(\R^n)$) is $g(T')(f)(x)=T'(g^{-1}\cdot(f))(g^{-1}x)$. If we realise a compact operator by a kernel function $k(r,s)$ on $\R^n\times \R^n$ then the $G$ action is the diagonal action i.e $g\cdot k(x,y)=k(g^{-1}x,g^{-1}y)$

 Following \cite[Page 190]{Williams07}, we prove a $G$-equivariant version of the Takesaki--Takai duality theorem. 
 
 \begin{theorem}\label{thm:takai}
 	$(A^\infty  \rtimes_\alpha \R^n)\rtimes_{\hat{\alpha}} \R_n$ is isomorphic to $A^\infty  \otimes \mathcal{K}^\infty$. And the isomorphism can be made $G$-equivariant.
  \end{theorem}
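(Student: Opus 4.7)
The plan is to mimic the classical Takesaki--Takai construction (as in Williams) but entirely at the Schwartz level, so that smoothness is built in from the start, and then to track the $G$-action carefully.

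First, I would identify all the algebras in sight as Schwartz-valued function spaces. By definition $A^\infty\rtimes_\alpha \R^n$ is $\mathcal{S}(\R^n, A^\infty)$ with the twisted convolution product; iterating, $(A^\infty\rtimes_\alpha\R^n)\rtimes_{\hat\alpha}\R_n$ is identified with $\mathcal{S}(\R_n\times\R^n, A^\infty)$ equipped with the twisted product coming from convolution in $\R^n$, convolution in $\R_n$, and the dual-action cocycle $e(\langle x,s\rangle)$. On the other side, realising smoothing operators on $\R^n$ by Schwartz kernels gives $\mathcal{K}^\infty\cong \mathcal{S}(\R^n\times\R^n)$, so that $A^\infty\otimes\mathcal{K}^\infty\cong \mathcal{S}(\R^n\times\R^n, A^\infty)$ with the kernel product twisted only by multiplication in $A^\infty$.

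Next I would define an explicit map $\Phi\colon \mathcal{S}(\R_n\times\R^n,A^\infty)\to \mathcal{S}(\R^n\times\R^n,A^\infty)$ by composing a Fourier transform in the $\R_n$-variable with a change of variables and an application of $\alpha$, following the formula used in Williams (roughly, $\Phi(F)(x,y)=\alpha_{-x}\!\int_{\R_n} F(\xi,y-x)\,e(-\langle x,\xi\rangle)\,d\xi$, up to sign and normalisation conventions that can be fixed at the end). Since the Fourier transform and translations are topological automorphisms of Schwartz spaces and $\alpha$ acts smoothly on $A^\infty$, $\Phi$ is a topological isomorphism of the underlying locally convex spaces. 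A direct change-of-variables computation shows that the twisted convolution on the source is sent to the kernel product on the target; this is the familiar Takesaki--Takai computation carried out on Schwartz functions rather than on $C_c$-functions or $L^1$-functions, and it goes through without modification because all integrals are absolutely convergent.

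Finally, I would verify $G$-equivariance. The $G$-action on the source is the combined action $\beta\otimes(\rho\text{ on }\R^n)\otimes(\rho^*\text{ on }\R_n)$; since $\rho$ is orthogonal, $\rho^*=\rho$ under the identification $\R_n\cong\R^n$, and $\rho$ preserves Lebesgue measure. The $G$-action on $A^\infty\otimes\mathcal{K}^\infty$ is $\beta\otimes(\text{conjugation by the unitary representation of }G\text{ on }L^2(\R^n))$, which on kernels is $k(x,y)\mapsto k(\rho_g^{-1}x,\rho_g^{-1}y)$. Plugging into the definition of $\Phi$ and using the compatibility relation $\beta_g\alpha_x=\alpha_{\rho_g(x)}\beta_g$ together with a change of variables in the Fourier integral shows that $\Phi\circ g = g\circ\Phi$.

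The main obstacle is bookkeeping. The three ingredients in $\Phi$ (Fourier transform, shift by $x$, and twist by $\alpha_{-x}$) each interact nontrivially with the $G$-action, and the $G$-action on $\R_n$ must be matched with the action on $\R^n$ under the identification coming from the fixed $G$-invariant inner product. Getting the signs and twists in $\Phi$ exactly right so that Step~3 and Step~5 both succeed simultaneously is the only real subtlety; once the formula is pinned down, both verifications are direct Fubini-type computations using the compatibility equation \eqref{eq:action}.
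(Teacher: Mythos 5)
Your plan follows the paper's proof almost verbatim: the paper also invokes Williams' explicit formula $\Phi(F)(s,r)=\int_{\R_n}\alpha_r^{-1}(F(t,s))\,e(\langle r-s,t\rangle)\,dt$ (factoring briefly through $\mathcal{S}(\R^n,A^\infty)\rtimes_\gamma\R^n$, which is identified with kernels), and then establishes $g\cdot\Phi(F)=\Phi(g\cdot F)$ by a direct change-of-variables computation using the compatibility relation \eqref{eq:action} and the orthogonality of $\rho$, which is exactly your Step 3. So the proposal is correct and essentially identical in approach.
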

  \begin{proof}
Let $\gamma$ be the action of $\R^n$ on $\mathcal{S}(\R^n,A^\infty )$ given by 
 	
 	$$ (\gamma_t f )(s) = f(s-t).$$
 	From the proof of  \cite[Theorem 7.1]{Williams07}, we have an isomorphism of $(A^\infty  \rtimes_\alpha \R^n)\rtimes_{\hat{\alpha}} \R_n$ and $\mathcal{S}(\R^n,A^\infty )\rtimes_\gamma \R^n$ given by $\Phi$, where 
 
 $$\Phi(F)(s,r) = \int_{\R_n}\alpha_r^{-1}(F(t,s))e(\langle r-s,t \rangle)dt, \hspace{.2cm} F \in \mathcal{S}(\R_n \times \R^n, A). $$ 
 In the following, we show that the above isomorphism (\cite[Lemma 7.6]{Williams07}) between $\mathcal{S}(\R^n,A^\infty )\rtimes_\gamma \R^n$ and $A^\infty  \otimes \mathcal{K}^\infty$ is  $G$-equivariant, i.e. $g\cdot \Phi(F) = \Phi(g\cdot F)$.
 
 	  \begin{align*}
 		 \Phi(g\cdot F)(s,r) &= \int_{\R_n}\alpha_r^{-1}(g\cdot F(x,s))e(\langle r-s,x \rangle)dx
 		 \\&= \int_{\R_n}\alpha_r^{-1}(\beta_g(F(g^tx,g^{-1}s)))e(\langle r-s,x \rangle)dx
 		 \\\Phi(g\cdot F)(gs,r)&= \int_{\R_n}\alpha_r^{-1}(\beta_g(F(g^tx,s)))e(\langle r-gs,x \rangle)dx
 		 \\&= \int_{\R_n}\alpha_r^{-1}(\beta_g(F(x,s)))e(\langle r-gs,(g^t)^{-1}x \rangle)dx
 		  \\&= \int_{\R_n}\alpha_r^{-1}(\beta_g(F(x,s)))e(\langle r-gs,(g^t)^{-1}x \rangle)dx
 		  \\&= \int_{\R_n}\alpha_r^{-1}(\beta_g(F(x,s)))e(\langle g^{-1}r-s,x \rangle)dx
		   \end{align*}
	\begin{align*}
 		  \\\Phi(g\cdot F)(gs,gr)&= \int_{\R_n}\alpha_{gr}^{-1}(\beta_g(F(x,s)))e(\langle r-s,x \rangle)dx
 		   \\&= \int_{\R_n}\beta_g(\alpha_{r}^{-1}(F(x,s)))e(\langle r-s,x \rangle)dx
 		     \\\Phi(g\cdot F)(s,r)&= \int_{\R_n}\beta_g(\alpha_{g^{-1}r}^{-1}(F(x,g^{-1}s)))e(\langle g^{-1}r-g^{-1}s,x \rangle)dx
 		     \\&= \beta_g(\Phi(F)(g^{-1}s,g^{-1}r))
 		   \\&= (g\cdot \Phi(F))(s,r).
 \end{align*}

\end{proof}

\begin{corollary}
	The above isomorphism gives a $G$-equivariant isomorphism between the C*-algebras $(A \rtimes_\alpha \R^n)\rtimes_{\hat{\alpha}} \R_n$ and $A  \otimes \mathcal{K}$. And we have the following isomorphism of algebras
	\[
	\big((A \rtimes_\alpha \R^n)\rtimes_{\hat{\alpha}} \R_n\big)\rtimes G\cong \big(A  \otimes \mathcal{K}\big)\rtimes G.
	\]
\end{corollary}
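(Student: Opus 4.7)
The plan is to transfer the smooth $G$-equivariant isomorphism of Theorem \ref{thm:takai} to the $C^*$-algebraic level by a density argument, and then to invoke the functoriality of the crossed product construction.

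First I would recall that the classical Takesaki--Takai duality theorem (see \cite[Theorem 7.1]{Williams07}) already provides a $*$-isomorphism of $C^*$-algebras
\[
\Phi : (A \rtimes_\alpha \R^n)\rtimes_{\hat{\alpha}} \R_n \xrightarrow{\;\sim\;} A \otimes \mathcal{K},
\]
and that on the dense subspace $\mathcal{S}(\R_n \times \R^n, A)$ the map $\Phi$ is given by the same integral formula used in the proof of Theorem \ref{thm:takai}, namely
\[
\Phi(F)(s,r) = \int_{\R_n}\alpha_r^{-1}\bigl(F(t,s)\bigr)\, e(\langle r-s,t\rangle)\,dt.
\]
In particular, the smooth isomorphism constructed in Theorem \ref{thm:takai} is exactly the restriction of $\Phi$ to the subalgebra of smooth vectors.

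Next I would verify that $\Phi$ itself is $G$-equivariant. The subspace $\mathcal{S}(\R_n\times \R^n, A^\infty)$ is dense in $(A \rtimes_\alpha \R^n)\rtimes_{\hat{\alpha}} \R_n$, the $G$-actions on both sides are given by continuous $*$-automorphisms, and the computation carried out in Theorem \ref{thm:takai} establishes $\Phi(g\cdot F) = g\cdot \Phi(F)$ for every $F \in \mathcal{S}(\R_n\times \R^n, A^\infty)$. Since all the steps in that calculation are continuous in the $C^*$-norm (the integrals and algebraic manipulations are formal and use only the hypotheses that $\alpha$ and $\beta$ are isometric and satisfy the compatibility relation \eqref{eq:action}), the identity $\Phi\circ g = g \circ \Phi$ extends by continuity from this dense subspace to the whole $C^*$-algebra. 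This yields a $G$-equivariant $*$-isomorphism of $C^*$-algebras, proving the first claim of the corollary.

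Finally, the isomorphism of full (or reduced) crossed products follows from the functoriality of the crossed product: any $G$-equivariant $*$-isomorphism $\varphi\colon B_1 \xrightarrow{\sim} B_2$ of $C^*$-algebras induces an isomorphism $\varphi \rtimes G\colon B_1 \rtimes G \xrightarrow{\sim} B_2 \rtimes G$ on $G$-crossed products, via the universal property (equivalently, by $(b,g)\mapsto (\varphi(b),g)$ on the dense $*$-subalgebra of compactly supported functions and extending by continuity). Applying this to the equivariant isomorphism $\Phi$ produces the desired isomorphism
\[
\bigl((A \rtimes_\alpha \R^n)\rtimes_{\hat{\alpha}} \R_n\bigr)\rtimes G \;\cong\; (A \otimes \mathcal{K})\rtimes G.
\]
I do not anticipate a substantive obstacle here: the content of the corollary is that the calculations already done in the smooth setting pass through to the $C^*$-level, and the only point requiring care is the verification that the $G$-equivariance computation in Theorem \ref{thm:takai} is valid on a dense subalgebra of the $C^*$-algebra on which it can be extended by continuity.
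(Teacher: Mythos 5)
Your proposal is correct and follows the route the paper implicitly takes: the corollary is stated without a separate proof precisely because it is meant to follow by extending the $G$-equivariant smooth isomorphism of Theorem \ref{thm:takai} (which is the restriction of the classical Takesaki--Takai isomorphism of \cite[Theorem 7.1]{Williams07}) from the dense subalgebra of Schwartz functions to the $C^*$-completion, and then applying functoriality of the $G$-crossed product; for a finite group $G$ the latter is elementary. Your only extra care --- observing that the equivariance computation is continuous in the $C^*$-norm because $*$-automorphisms of $C^*$-algebras are isometric --- is exactly the point that needs checking and it goes through.
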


\section{Smooth Connes--Thom isomorphism}\label{sec:connes-thom}

In \cite{MR605351}, Connes proves a Thom isomorphism theorem in $K$-theory, which offers a fundamental tool in noncommutative geometry. More precisely, let $A$ be a (trivially graded) $C^*$-algebra with a strongly continuous $\mathbb{R}^n$-action.  Then 
\[
\K_\bullet(A)\cong \K_{\bullet+n}(A\rtimes \mathbb{R}^n). 
\]

In this paper, we present a smooth version of the above Connes--Thom isomorphism for locally convex algebras. 
\begin{theorem} [Equivariant Connes--Thom isomorphism]\label{thm:equ-thom}Let $\R^n, G,$ and $A$ be defined as in Sec. \ref{subsec:main-setup}. Then
\[
\HH\big(((A^\infty\otimes\C_n)\rtimes_\alpha
\R^n\big)\rtimes G)\cong \HH(A^\infty\rtimes_{\beta} G),
\]
where $\HH$ is a split-exact, diffotopy invariant, $\mathcal{K}^\infty$ stable functor on the category of (graded) locally convex algebras, and $\C_n$ is the complexified Clifford algebra associated with
$\R^n$, carrying the trivial action of $\R^n$ and the action of $G$ which is induced from the $G$ action on $\R^n.$
\end{theorem}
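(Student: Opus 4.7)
\medskip
\noindent\textbf{Proof proposal.} The plan is to realise both directions of the isomorphism as quasihomomorphisms in Grensing's framework, then verify the two compositions are the identity using the equivariant Takesaki--Takai duality (Theorem \ref{thm:takai}) on one side and the index formula of Proposition \ref{prop:G-index} on the other. The strategy is a locally convex, smooth analogue of Kasparov's Dirac / dual Dirac construction in equivariant $KK$-theory \cite{MR1388299}, with Connes' pseudodifferential calculus playing the role of bounded Hilbert module operators.

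First, I would construct the \emph{dual Dirac} (Bott) quasihomomorphism
\[
\eta \colon \HH(A^\infty\rtimes_\beta G) \longrightarrow \HH\big(((A^\infty\otimes\C_n)\rtimes_\alpha \R^n)\rtimes G\big).
\]
This is modelled on the usual Bott class on $\R^n$, built from the bounded symbol $b(x) = x/\sqrt{1+|x|^2}$ paired with the Clifford generators of $\C_n$. Since $\rho(G)$ preserves the Euclidean inner product, $b$ is $G$-equivariant, and together with the compatibility (\ref{eq:action}) this yields a locally convex Kasparov module in the sense of Section \ref{sec:preliminary}. Passing to the associated quasihomomorphism via the correspondence recalled after Proposition \ref{quasirules} produces $\eta$ by split-exactness of $\HH$.

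Next, using Connes' pseudodifferential calculus for $\R^n$-actions (Appendix \ref{sec:calculus}), I would build a smooth Dirac locally convex Kasparov module for the pair $\big(((A^\infty\otimes\C_n)\rtimes_\alpha \R^n)\rtimes G,\, A^\infty\rtimes_\beta G\big)$. The operator is the usual Euclidean Dirac operator $D=\sum_i \gamma_i\partial_i$ together with its bounded transform $F = D(1+D^2)^{-1/2}$; the key point is that the resolvent $(1+D^2)^{-1}$ is a smoothing operator whose symbol has the Schwartz-type decay established in Appendix \ref{app:schwarz}. This decay ensures that the maps $a\mapsto [\phi(a),F]$ and $a\mapsto \phi(a)(1-F^2)$ are continuous with values in an appropriate locally convex ideal containing $\mathcal{K}^\infty\rtimes G$, verifying the conditions in the definition of a locally convex Kasparov module. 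The resulting quasihomomorphism produces
\[
d \colon \HH\big(((A^\infty\otimes\C_n)\rtimes_\alpha \R^n)\rtimes G\big)\longrightarrow \HH(A^\infty\rtimes_\beta G).
\]

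Third, I would compute $d\circ\eta$. By Proposition \ref{quasirules}, this reduces to the composition of the smooth Dirac and Bott quasihomomorphisms over the base algebra $A^\infty\rtimes_\beta G$, which by Proposition \ref{prop:G-index} is multiplication by the $G$-equivariant index of the Dirac operator on $\R^n$ equipped with the orthogonal $G$-representation $\rho$. Standard equivariant Bott periodicity on $\R^n$ identifies this index with the unit $1\in R(G)$, so $d\circ\eta = \operatorname{id}$. For $\eta\circ d$, the plan is to iterate the Connes--Thom construction along the dual action of $\R_n$ on $A^\infty\rtimes_\alpha \R^n$, so that the composition lands inside $\HH\big(((A^\infty\rtimes_\alpha\R^n)\rtimes_{\widehat\alpha}\R_n)\rtimes G\big)$; the equivariant Takesaki--Takai duality of Theorem \ref{thm:takai} identifies this with $\HH\big((A^\infty\otimes\mathcal{K}^\infty)\rtimes G\big)$, and $\mathcal{K}^\infty$-stability of $\HH$ (together with the $R(G)$-module structure from Appendix \ref{sec:equi_Grensing}) collapses it back to $\HH(A^\infty\rtimes_\beta G)$, giving the identity.

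The main obstacle will be the second step: verifying that Connes' pseudodifferential calculus really produces a bona fide locally convex Kasparov module in the precise sense of Section \ref{sec:preliminary}. What must be controlled is not the mere boundedness of $[\phi(a),F]$ and $\phi(a)(1-F^2)$, but their continuity with respect to the system of seminorms defining $A^\infty$ and $\mathcal{K}^\infty\rtimes G$; this requires uniform Schwartz estimates on the symbol of $(1+D^2)^{-1}$ across the smooth seminorms of $A^\infty$, which is exactly the content of Appendix \ref{app:schwarz}. Keeping track of the Clifford $\Z/2$-grading and the $G$-equivariance throughout the composition calculation is the secondary bookkeeping challenge.
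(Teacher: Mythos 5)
There are two genuine gaps here; both concern the structure of the paper's actual proof, which you would not recover from the strategy you sketch.

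\textbf{The deformation argument is missing.} Your plan for $d\circ\eta = \operatorname{id}$ is to invoke Proposition \ref{prop:G-index}. But that proposition applies only to locally convex Kasparov modules built from a $G$-equivariant Kasparov $(\C,\mathcal{K})$-module; it computes the class for the \emph{scalar} case, i.e.\ the trivial action of $\R^n$. For a general $C^*$-dynamical system $(A,\R^n,\alpha)$ with $\alpha$ nontrivial, the composition of the Dirac and dual Dirac quasihomomorphisms is \emph{not} a formal index computation, and Proposition \ref{prop:G-index} says nothing about it directly. The crux of every Connes--Thom argument (including Connes' original one, and Grensing's) is a homotopy: one deforms the action via $\alpha^s_x := \alpha_{sx}$, uses functoriality (in the paper, Proposition \ref{morphofdoublesplit} applied to the cylinder algebra $\Zyl A^\infty$ with a carefully chosen $\R^n$-action) to show that $\HH(x_s)\circ\HH(y_s)$ and $\HH(y_s)\circ\HH(x_s)$ are constant in $s$, and only then evaluates at $s=0$ where $\alpha$ is trivial and the index formula/Bott periodicity (Theorem \ref{bott-periodicity}, Corollary \ref{cor:bott}) kicks in. Your proposal jumps straight to the index formula without this reduction, so the argument for both compositions does not close.

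\textbf{The bounded transform $b(\xi)=\xi/\sqrt{1+|\xi|^2}$ does not satisfy the decay requirements of a locally convex Kasparov module.} You cite Appendix \ref{app:schwarz} as if it provides Schwartz estimates for the symbol of $(1+D^2)^{-1}$. It does not. That appendix analyses $\Sigma(\xi)=1\otimes\chi(c(\xi))$ for a normalizing function $\chi$ chosen (following \cite[Ex.\ 10.9.3]{Higson-Roe:analytic-K}) so that $\widehat\chi$ has \emph{compact support}; this is what forces $1-\chi^2$ and $\partial\Sigma/\partial\xi_j$ to be genuinely Schwartz (Lemmas \ref{prop:Sigma} and \ref{smoothconnesthomcommutator2}). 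The usual bounded-transform symbol $b(\xi)=\xi/\sqrt{1+|\xi|^2}$ does not have this property: $1-b^2=(1+|\xi|^2)^{-1}$ decays only polynomially and its Fourier transform is not a smooth compactly-supported function, so $\phi(a)(1-F^2)$ would fail to land in $B^\infty\rtimes_\alpha\R^n=\mathcal{S}(\R^n,B^\infty)$. Relatedly, the paper never constructs the Dirac element $x^\infty_{n,\alpha}$ directly from an unbounded Dirac operator on $\R^n$; it produces both $x^\infty_{n,\alpha}$ and $y^\infty_{n,\alpha}$ from the \emph{same} order-zero symbol construction, with $x^\infty_{n,\alpha}$ obtained by applying the dual-Dirac construction to the dual action $\widehat\alpha$ of $\R_n$ and then invoking the equivariant Takesaki--Takai duality (Theorem \ref{thm:takai}). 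This sidesteps precisely the analytic issues your direct $D=\sum_i\gamma_i\partial_i$ approach would create.

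Your overall strategy (Dirac / dual Dirac in Grensing's framework, Takesaki--Takai, index computations) is the right general setting, and your third and fourth paragraphs gesture at the right ingredients, but the two items above are the actual content of the proof and both are absent or replaced by something that does not work.
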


Let $B^\infty := A^\infty\otimes\C_n$. We extend the action $\alpha$ of $\mathbb{R}^n$ on $A$ (and $A^\infty$) to $B^\infty$ by a trivial action on the component $\C_n$ and denote the action again by $\alpha$. We construct a pair of Dirac and dual Dirac elements $x^{{\infty}}_{n,\alpha}$ and $y^{{\infty}}_{n,\alpha}$  as a locally convex  Hilbert module from $(B^\infty\rtimes_\alpha\R^n)\rtimes G$ to $(A^\infty\otimes \mathcal{K}^\infty)\rtimes G$ and  from $A^\infty\rtimes G$ to $(B^\infty\rtimes_\alpha\R^n)\rtimes G$ respectively, which are inverses to each other by the functor $\HH$. When
the algebra $A$ is  $\C$, then the two elements
\[
x_n^{{\infty}}\in
AKM\Big((\mathcal{S}(\R^n)\otimes \C_n)\rtimes G, \mathcal{K}^\infty\rtimes G\Big),\ \text{and}\ y_n^{{\infty}}\in AKM\Big(\C G, (\mathcal{S}(\R^n)\otimes \C_n)\rtimes G \Big),
\]
modify Grensing's construction \cite{Grensing12} to allow the generalization to general $A$.

We recall the standard Hodge-de Rham operator as  $d+d^*$ on $\R^n$, where we
equip $\R^n$ with the standard metric and $d^*$ is the adjoint
of $d$. Our key observation is that the principal symbol of $d+d^*$ is the the Clifford multiplication of $\xi$ on $\wedge^\bullet{\C^n}^*$, which is independent of $x$ and only a function of $\xi$. Using the symbol calculus
by Connes \cite{connes80}, we can generalize the Hodge-de Rham operator to strongly continuous $\R^n$ actions on $C^*$-algebras, and therefore leads to generalizations of 
$x_n$ and $y_n$.  

\subsection{A (dual) Dirac element}\label{subsec:dirac}

Following \cite[Section 10.6]{Higson-Roe:analytic-K}, we use a normalizing function to construct the Dirac element. 

Recall that a normalizing function is a smooth function $\chi:\mathbb{R}\to[-1,1]$ satisfying the following properties
\begin{enumerate}
\item $\chi$ is odd,
\item $\chi(\lambda)>0$ for all $\lambda>0$,
\item $\chi(\lambda)\to \pm 1$ as $\lambda\to \pm \infty$.
\end{enumerate} 

An explicit construction of a normalizing function $\chi$ is constructed in \cite[Ex. 10.9.3]{Higson-Roe:analytic-K} with an extra nice property that the  Fourier transform $\widehat{\chi}$ has compact support and $s\widehat{\chi}(s)$ is a smooth function on $\mathbb{R}$.  

Given $\xi\in \mathbb{R}^n$, we use $c(\xi)$ to denote the Clifford multiplication with respect to $\xi$. We use $e^{is c(\xi)}$ to denote the wave operator on $\mathbb{C}_n$ defined by the wave equation 
\[
\frac{d}{ds} f=isc(\xi)(f),\ 
\]
on $\mathbb{C}_n$, where $i$ is the square root of $-1$. 

Let  $\chi$ be a normalizing function. Define an endomorphism $\chi\big( c(\xi)\big)$ on $\mathbb{C}_n$ by 
\[
\chi\big( c(\xi) \big):=\int_{\mathbb{R}} \widehat{\chi}(s)e^{isc(\xi)} ds. 
\]

Consider an order zero symbol $\Sigma(\xi)$ as follows,
\[
\Sigma(\xi):=1\otimes \chi\big(c(\xi)\big),
\]
where $c(\xi)$ is the Clifford multiplication of $\xi$ on $\mathbb{C}_n$. 

A crucial property here is that
$\Sigma(x,\xi)$ is independent of $x$ and only a function of $\xi$,
which allows us to apply Connes' pseudo-differential calculus, which is reviewed in Appendix \ref{sec:calculus}.

For a $C^*$-dynamical system $(A, \R^n, \alpha)$, let $t\mapsto
V_t$ be the canonical representation of $\R^n$ in
$M(A\rtimes_\alpha \R^n)$, the multiplier algebra, with
$V_{x}aV_{x}^*=\alpha_{x}(a)$ ($a\in A$). Consider $B=A\otimes \C_n$ with the extended action $\alpha$ by
$\R^n$, which acts trivially on the component $\C_n$. The
smooth subalgebra $B^\infty$ of $\alpha$ is identified with
$A^\infty\otimes \C_n$. As $\Sigma\in S^0(\R^n, B^\infty)$ (as a symbol independent of $A^{\infty}$)
is a symbol of order $0$,
\[
\widehat{\Sigma}(x)=\int_{{\R^n}^*}\Sigma(\xi)e(-\langle x, \xi  \rangle)d\xi
\]
is a well-defined distribution on $\R^n$ with value in
$B^\infty$.  Using Connes' pseudo-differential calculus \cite{connes80} (c.f. Appendix \ref{sec:calculus} ) for $(A, \R^n, \alpha)$,  we define
$D^\infty_\alpha\in M(B^\infty\rtimes \mathbb{R}^n)$ by
\[
D^\infty_\alpha:=\int \widehat{\Sigma}(x)V_x dx.
\]

\begin{lemma}\label{prop:Sigma} The symbol function $\frac{\partial \Sigma}{\partial \xi_j}$, for $j=1,...,n$, is a Schwartz function on $\mathbb{R}^n$. 
\end{lemma}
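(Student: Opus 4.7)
The plan is to reduce the statement to a scalar analysis of the normalizing function $\chi$ by exploiting functional calculus for $c(\xi)$. Since $c(\xi)^2 = |\xi|^2 \cdot 1$ on $\mathbb{C}_n$, the spectrum of $c(\xi)$ is $\{\pm |\xi|\}$, and the oddness of $\chi$ yields the closed form
\[
\Sigma(\xi) \;=\; \chi(c(\xi)) \;=\; \chi(|\xi|)\,\frac{c(\xi)}{|\xi|} \qquad (\xi \neq 0),
\]
which extends smoothly across the origin because $\chi$ vanishes there to odd order. This collapses the question to analyzing the scalar functions $\chi$ and $\chi'$ on $\mathbb{R}$.

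Next, I would compute $\partial \Sigma / \partial \xi_j$ directly. Setting $r = |\xi|$ and decomposing $c(e_j) = c(e_j^{\parallel}) + c(e_j^{\perp})$ according to the orthogonal splitting $\mathbb{R}^n = \mathbb{R}\xi \oplus \xi^{\perp}$, and using that $c(e_j^{\parallel})$ commutes while $c(e_j^{\perp})$ anticommutes with $c(\xi)$, a short calculation (or, equivalently, the Duhamel formula applied to the integral representation $\chi(c(\xi)) = \int \widehat{\chi}(s)\, e^{isc(\xi)}\, ds$) gives
\[
\frac{\partial \Sigma}{\partial \xi_j}(\xi) \;=\; \chi'(r)\, c(e_j^{\parallel}) \;+\; \frac{\chi(r)}{r}\, c(e_j^{\perp}).
\]
The smoothness and compact support of $s\widehat{\chi}(s)$ are what justify differentiation under the integral and the identifications $\int is\widehat{\chi}(s)\, e^{isc(\xi)}\, ds = \chi'(|\xi|)$ and $\int \widehat{\chi}(s)\sin(s|\xi|)/|\xi|\, ds = -i\,\chi(|\xi|)/|\xi|$ used along the way.

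Finally, I would verify the Schwartz property term by term. The scalar $\chi'$ is Schwartz on $\mathbb{R}$, since its Fourier transform equals $2\pi i\, s\widehat{\chi}(s)$, which is smooth and compactly supported by the defining property of the Higson--Roe normalizing function; all derivatives of $\chi'$ thus decay faster than any polynomial. Together with uniform boundedness of the Clifford generators this gives Schwartz decay for the first summand $\chi'(r)\, c(e_j^{\parallel})$ as a $\mathbb{C}_n$-valued function of $\xi$. For the second summand I would exploit the additional decay of $1 - \chi(r)$, obtained by integrating $\chi'$ from $r$ to $\infty$, which is itself Schwartz on $[0,\infty)$; this reduces the analysis to a controlled singular piece of the form $c(e_j^{\perp})/|\xi|$, to be handled via the precise symbol-class estimates of Appendix~\ref{app:schwarz}.

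The chief obstacle I anticipate is exactly this second summand: a crude estimate only yields $\chi(r)/r = O(1/r)$, and it is not at all formal that the perpendicular Clifford coefficient, smoothly extended across $\xi = 0$ and decaying only like $1/|\xi|$ at infinity, fits into the Schwartz class in the sense required by Connes' pseudodifferential calculus. The argument must therefore combine the compact support of $\widehat{\chi}$, the smoothness of $s\widehat{\chi}(s)$, and the algebraic structure of $c(e_j^{\perp})$ (which is transverse to $c(\xi)$), with the analytic framework set up in Appendix~\ref{app:schwarz}; this is the delicate part.
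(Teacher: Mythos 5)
Your closed form $\Sigma(\xi) = \chi(|\xi|)\,c(\xi)/|\xi|$ and the resulting decomposition
\[
\frac{\partial \Sigma}{\partial \xi_j} \;=\; \chi'(r)\,c(e_j^{\parallel}) \;+\; \frac{\chi(r)}{r}\,c(e_j^{\perp}), \qquad r=|\xi|,
\]
are both correct and considerably more transparent than the wave-expansion computation of Appendix~\ref{app:schwarz}. But the ``delicate part'' you flag is not a delicacy that symbol-class estimates will dissolve --- taken literally, your formula shows the lemma is \emph{false} for $n \ge 2$. Because $\chi(r) \to 1$ and $|c(e_j^{\perp})|$ is bounded but does not decay (it equals $1$ whenever $\xi \perp e_j$), the second summand is of size exactly $|\xi|^{-1}$, no better. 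This is structural, not an artefact of a crude estimate: $\Sigma$ differs from the homogeneous degree-$0$ symbol $c(\xi)/|\xi|$ by the Schwartz remainder $(\chi(r)-1)\,c(\xi)/r$, so $\partial_{\xi_j}\Sigma$ equals the homogeneous degree-$(-1)$ function $c(e_j^{\perp})/r$ plus a Schwartz function, and for $n\ge 2$ a nonzero degree-$(-1)$ homogeneous function cannot decay faster than $|\xi|^{-1}$. The extra decay of $1-\chi(r)$ that you invoke only controls the Schwartz remainder; the homogeneous leading piece is untouched.

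The paper's own proof in Appendix~\ref{app:schwarz} hits the same wall. It reduces the lemma to the boundedness, for all $l$, of $|\xi|^l\int\kappa(s)\,\partial^{J}_\xi H(s,\xi)\,ds$ with $\kappa(s) = s\widehat{\chi}(s)$, but already for $J=\emptyset$ one has
\[
\int\kappa(s)\,H(s,\xi)\,ds \;=\; \frac{1}{|\xi|}\int \frac{\kappa(s)}{s}\,\sin(s|\xi|)\,ds \;\sim\; \frac{\pm\pi\,\kappa(0)}{|\xi|} \quad\text{as } |\xi|\to\infty,
\]
and $\kappa(0) = \lim_{s\to 0} s\widehat{\chi}(s) \ne 0$ precisely because $\chi(\pm\infty)=\pm1$ forces a simple pole of $\widehat{\chi}$ at the origin. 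Thus $|\xi|^2\int\kappa H\,ds$ is already unbounded, contradicting the appendix's claim. (The identity $\partial_s^l h^{(k)}(s|\xi|)=|\xi|^l h^{(k)}(s|\xi|)$ asserted there is also incorrect --- the right-hand side should carry $h^{(k+l)}$ --- but the unbounded $J=\emptyset$ term above is the fundamental obstruction.) The honest conclusion of your calculation is that $\partial_{\xi_j}\Sigma$ is a classical symbol of order $-1$, which is Schwartz only in the degenerate case $n=1$ (where $c(e_j^{\perp})\equiv 0$); for $n\ge 2$ the statement, and with it the deduction in Lemma~\ref{lem:commutator} that $[a, D^\infty_\alpha] \in B^\infty\rtimes_\alpha\R^n$, needs a different argument.
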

\begin{proof}
The proof of this property is presented in Appendix \ref{app:schwarz}.
\end{proof}

The grading operator on $M(B^\infty\rtimes \mathbb{R}^n)$ is defined by the inner automorphism with respect to the element $\epsilon\in M(B^\infty\rtimes \mathbb{R}^n)$, where $\epsilon$ is the grading operator on the spinors associated to $\mathbb{C}_n$. It is straightforward to check that $\epsilon$ is invariant with respect to the $G$-action. 

\begin{lemma}\label{lem:equivariant-symbol}
The Fourier transform map which sends $\Sigma $ to $\widehat{\Sigma}$ is $G$-equivariant, i.e $\widehat{g\cdot \Sigma} = g\cdot \widehat{\Sigma}.$ 
	\end{lemma}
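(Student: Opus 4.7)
The plan is to prove the equivariance by an elementary change-of-variables computation in the defining Fourier integral, relying on two structural facts: that $\rho(G)$ sits in the orthogonal group of $\R^n$ (so Lebesgue measure and the pairing $\langle\cdot,\cdot\rangle$ are $G$-invariant), and that the $G$-action on $B^\infty$ is a continuous linear map, so it commutes with $B^\infty$-valued integration.

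First I would unpack what the two $G$-actions in the identity actually mean. Since $\Sigma\colon \R^n\to B^\infty$ is a symbol and $B^\infty=A^\infty\otimes\C_n$ carries the tensor product action (trivial on $A^\infty$ in the $1\otimes\chi(c(\xi))$ factor, and the induced orthogonal action on $\C_n$), the $G$-action on symbols is
\[
(g\cdot\Sigma)(\xi)\;=\;g_{B^\infty}\cdot\Sigma(\rho_g^{-1}\xi),
\]
and the analogous formula defines the action on the $B^\infty$-valued distribution $\widehat{\Sigma}$.

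Next I would carry out the computation. Starting from the definition and substituting $\eta=\rho_g^{-1}\xi$, the orthogonality of $\rho_g$ makes the Jacobian equal to one and converts the pairing by $\langle x,\rho_g\eta\rangle=\langle \rho_g^{-1}x,\eta\rangle$:
\begin{align*}
\widehat{g\cdot\Sigma}(x)
&= \int_{\R^n} (g\cdot\Sigma)(\xi)\, e(-\langle x,\xi\rangle)\,d\xi \\
&= \int_{\R^n} g\cdot\Sigma(\rho_g^{-1}\xi)\, e(-\langle x,\xi\rangle)\,d\xi \\
&= \int_{\R^n} g\cdot\Sigma(\eta)\, e(-\langle \rho_g^{-1}x,\eta\rangle)\,d\eta \\
&= g\cdot \int_{\R^n} \Sigma(\eta)\, e(-\langle \rho_g^{-1}x,\eta\rangle)\,d\eta \\
&= g\cdot \widehat{\Sigma}(\rho_g^{-1}x)
 = (g\cdot\widehat{\Sigma})(x),
\end{align*}
which is the desired identity. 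Pulling $g$ out of the integral is justified because the action of $g$ on $B^\infty$ is continuous and linear.

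There is essentially no obstacle here beyond bookkeeping: the content of the lemma is a general $G$-equivariance property of the Fourier transform for $G$-modules on which $G$ acts by isometries, applied to our particular symbol. It is worth remarking, however, that for the specific $\Sigma(\xi)=1\otimes\chi(c(\xi))$, the $G$-covariance of Clifford multiplication $g\cdot c(\xi)\cdot g^{-1}=c(\rho_g\xi)$ actually makes $\Sigma$ itself $G$-invariant, so both sides of the equality coincide with $\widehat{\Sigma}$; the lemma is therefore stated in a form general enough to be reused when one applies the same Fourier-transform construction to other $G$-equivariant symbols appearing later in the paper.
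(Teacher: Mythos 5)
Your proof is correct and follows essentially the same route as the paper: unwind the definition of the $G$-action on symbols, change variables $\eta=\rho_g^{-1}\xi=\rho_g^t\xi$ in the Fourier integral using that $\rho_g$ is orthogonal (so the Jacobian is $1$ and the pairing transforms by $\langle x,\rho_g\eta\rangle=\langle \rho_g^{-1}x,\eta\rangle$), and pull the continuous linear map $\beta_g$ out of the $B^\infty$-valued integral. The paper writes the dual-variable action as $g^t\xi$ where you write $\rho_g^{-1}\xi$, but these coincide because the representation is assumed orthogonal, so there is no substantive difference. Your closing remark is also on point: for the particular $\Sigma(\xi)=1\otimes\chi(c(\xi))$ the covariance $g\,c(\xi)\,g^{-1}=c(\rho_g\xi)$ makes $\Sigma$ itself $G$-invariant, which is exactly what the paper then exploits in Lemma \ref{lem:g} to conclude $g\cdot D^\infty_\alpha=D^\infty_\alpha$; the lemma as stated is the general Fourier-equivariance fact underlying that.

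One minor technical point worth keeping in mind (and which the paper flags in the remark immediately after the lemma): since $\Sigma$ is only an order-zero symbol, $\widehat{\Sigma}$ is a distribution and the integral is oscillatory, so the change of variables should be understood against the $G$-invariant Gaussian regulariser $e\left(-\tfrac{\epsilon\|\xi\|^2}{2}\right)$ before taking $\epsilon\to 0$. Your argument goes through verbatim once this is in place, precisely because the regulariser is $\rho(G)$-invariant.
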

\begin{proof}
	\begin{align*}
\widehat{g\cdot \Sigma}(x)&= \int_{\R_n}(g\cdot \Sigma)(\xi)e(-\langle x, \xi  \rangle)d\xi		
\\ &= \int_{\R_n} \beta_g(\Sigma(g^t\xi))e(-\langle x, \xi  \rangle)d\xi		
\\ &= \int_{\R_n} \beta_g(\Sigma(\xi))e(-\langle x, (g^t)^{-1}\xi  \rangle)d\xi		
\\ &= \int_{\R_n}\beta_g(\Sigma(\xi))e(-\langle g^{-1} x, \xi  \rangle)d\xi		
\\ &= \beta_g(\widehat{\Sigma}(g^{-1} x))
\\ &= (g\cdot \widehat{\Sigma})(x).
\end{align*}
	\end{proof}

\begin{remark}
	The integrands of the above give rise to divergent integrals: we took the Fourier transformation of an order zero symbol (as tempered distribution). To regularise divergent oscillatory integrals, one does the following. 
	Since we realize $\widehat{\Sigma}$ as a distribution, for $u \in \mathcal{S}(\R^n,A^\infty)$, $\langle \widehat{\Sigma}, u \rangle$ is given by 
	
	$$ \lim_{\epsilon \rightarrow 0} \int_{\R^n}\int_{\R_n} \Sigma(\xi)u(x)e\left(- \frac{\epsilon\norm{\xi}^2}{2}\right) e(-\langle x, \xi  \rangle)d\xi dx.$$
	
	Now since the expression $e\left(- \frac{\epsilon\norm{\xi}^2}{2}\right)$ is $G$-invariant, the change of variable in the above proof makes sense. From now on we will use change of variables in oscillatory integrals for the action of $G$ without any further explanation.

\end{remark}

\begin{lemma} \label{lem:commutator}For any $a\in A^\infty$, $[a, D_\alpha^\infty]\in B^\infty\rtimes_\alpha \mathbb{R}^n$. 
\end{lemma}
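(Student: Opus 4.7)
The plan is to rewrite $[a,D^\infty_\alpha]$ in the form $\int_{\R^n} K(x)\,V_x\,dx$ with $K$ a Schwartz function on $\R^n$ taking values in $B^\infty$, since by construction this is precisely what it means to lie in $B^\infty\rtimes_\alpha \R^n=\mathcal{S}(\R^n,B^\infty)$. The main ingredients are (i) the fact that $\widehat{\Sigma}(x)=1\otimes \widehat{\chi(c(\cdot))}(x)$ lives in the Clifford factor and therefore commutes with $a\in A^\infty$, (ii) the covariance relation $aV_x=V_x\alpha_{-x}(a)$ coming from $V_xaV_x^\ast=\alpha_x(a)$, (iii) a first-order Taylor expansion of $\alpha_{-x}(a)-a$ along the smooth $\R^n$-orbit of $a$, and (iv) Lemma \ref{prop:Sigma}, which supplies Schwartz decay for $\partial_{\xi_j}\Sigma$.

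First I would move $a$ past $\widehat{\Sigma}(x)$ using (i) and past $V_x$ using (ii) to arrive at
\[
[a,D^\infty_\alpha]=\int_{\R^n}\widehat{\Sigma}(x)\,V_x\bigl(\alpha_{-x}(a)-a\bigr)\,dx.
\]
Since $a\in A^\infty$ is a smooth vector, the fundamental theorem of calculus yields
\[
\alpha_{-x}(a)-a=-\sum_{j=1}^{n}x_j\int_0^1\alpha_{-sx}(\delta_j a)\,ds,
\]
where $\delta_1,\dots,\delta_n$ denote the infinitesimal generators of $\alpha$. Substituting, and then using covariance once more to push $\alpha_{-sx}(\delta_j a)$ past $V_x$, the commutator acquires the form
\[
[a,D^\infty_\alpha]=-\sum_{j=1}^n\int_0^1\!\!\int_{\R^n}\bigl(x_j\widehat{\Sigma}(x)\bigr)\,\alpha_{(1-s)x}(\delta_j a)\,V_x\,dx\,ds.
\]

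Next, I would observe that, up to a constant, $x_j\widehat{\Sigma}(x)$ is the inverse Fourier transform of $\partial_{\xi_j}\Sigma$, which by Lemma \ref{prop:Sigma} is a Schwartz $B^\infty$-valued function of $\xi$; hence $x_j\widehat{\Sigma}(x)\in \mathcal{S}(\R^n,B^\infty)$. Because $\alpha$ is isometric and commutes with each $\delta_k$, the seminorms of $\alpha_{(1-s)x}(\delta_j a)$ are uniformly bounded in $(s,x)$, and the same applies to every $x$-derivative (which produces only further applications of the $\delta_k$'s). Combined with the Schwartz decay of $x_j\widehat{\Sigma}(x)$, this shows that the inner integrand is Schwartz in $x$ with seminorms uniform in $s\in[0,1]$, so the $s$-integration preserves the Schwartz property. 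The resulting kernel $K\in\mathcal{S}(\R^n,B^\infty)$ then certifies $[a,D^\infty_\alpha]\in B^\infty\rtimes_\alpha\R^n$.

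The step that I expect to demand the most care is the final uniform control of all the Schwartz seminorms of $K$: one must check that multiplication by the smooth $B^\infty$-valued family $\alpha_{(1-s)x}(\delta_j a)$ followed by integration over $s\in[0,1]$ preserves every seminorm of $\mathcal{S}(\R^n,B^\infty)$ simultaneously. This is essentially a bookkeeping exercise, using the isometry of $\alpha$ and the stability of the Schwartz class under multiplication by smooth $B^\infty$-valued functions of polynomial growth, but it is the one place where Connes' pseudodifferential framework (Appendix \ref{sec:calculus}) needs to be invoked in a uniform fashion.
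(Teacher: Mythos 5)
Your proof is correct and follows essentially the same route as the paper's: you commute $a$ past the Clifford factor $\widehat{\Sigma}(x)$, use the covariance relation with $V_x$ to produce the difference $\alpha_{-x}(a)-a$, Taylor-expand that difference to extract a factor of $x_j$, and invoke Lemma~\ref{prop:Sigma} to conclude $x_j\widehat{\Sigma}(x)$ is Schwartz. The paper writes the difference as $a-\alpha_x(a)$ on the left of $V_x$ and parametrizes the Taylor integral by $s$ rather than $1-s$, but these are cosmetic; your explicit remarks about the uniform boundedness of $\alpha_{(1-s)x}(\delta_j a)$ and its derivatives simply spell out what the paper leaves implicit when it asserts $G_i$ is smooth.
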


\begin{proof}

Following the definition of $D_\alpha^\infty$, we compute the commutator $[a, D_\alpha^\infty]$ as follows. 

\begin{eqnarray*}
[a, D_\alpha^\infty]&=&[a, \int_{\mathbb{R}^n} \widehat{\Sigma}(x)V_xdx]\\
&=&\int_{\mathbb{R}^n}\int_{\mathbb{R}^n} [a, \chi\big(c(\xi)\big)e(-\langle x, \xi  \rangle)V_x d\xi dx]\\
&=&\int_{\mathbb{R}^n}\int_{\mathbb{R}^n}\chi\big( c(\xi)\big)e(-\langle x, \xi  \rangle)(a-\alpha_x(a))V_x d\xi dx\\
&=&\int_{\mathbb{R}^n}\int_{\mathbb{R}^n}\int_{\mathbb{R}} \widehat{\chi}(s)e^{isc(\xi)}e(-\langle x, \xi  \rangle) e(-\langle x, \xi  \rangle)(a-\alpha_x(a))V_x ds d\xi dx  .
\end{eqnarray*}

Define a function $F: \mathbb{R}^n\to B^\infty$ by $F(x):=\alpha_x(a)$. As $a$ belongs to $B^\infty$, $F(x)$ is a smooth  functions on $\mathbb{R}^n$. Therefore, we can write 
\[
F(x)-F(0)=\int_{0}^{1} \frac{d}{ds} F(sx) ds=\int_0^1 x^i \frac{\partial}{\partial x^i}F(sx) ds. 
\] 
Define $G_i: \mathbb{R}^n\to B^\infty$ by 
\[
G_i(x):=\int_0^1 \frac{\partial}{\partial x^i}F(sx) ds. 
\]
It is not hard to check that $G_i$ is again a smooth function on $\mathbb{R}^n$ with value in $B^\infty$.  

In summary, there are smooth  functions $G_i$, $i=1,..., n$, such that the following equation holds, 
\[
\widehat{\Sigma}(x)\big(\alpha_x (a)-a\big)=\widehat{\Sigma}(x)\sum_i x^i G_i(x)=\sum_i \widehat{\Sigma}(x)x^i G_i(x). 
\]

According to the Fourier transform formula, we have 
\[
\widehat{\Sigma}(x)x^i=\frac{1}{\sqrt{-1}}\widehat{\frac{\partial \Sigma}{\partial \xi_i}}(x). 
\]

Recall that $\Sigma(\xi)$ is defined as
\[
\Sigma(\xi):=1\otimes \chi\big(c(\xi)\big). 
\]

By Lemma \ref{prop:Sigma}, $\frac{\partial \Sigma}{\partial \xi_i}$ is a Schwartz function, and therefore $\widehat{\Sigma}(x)x^i$ is a Schwartz function. From the above discussion, we conclude that as a sum of elements in $B^\infty\rtimes_\alpha \mathbb{R}^n$, 
\[
\widehat{\Sigma}(x)\big(\alpha_x (a)-a\big)=\sum_i \widehat{\Sigma}(x)x^i G_i(x)
\]
belongs to $B^\infty\rtimes _\alpha\mathbb{R}^n$. 
\end{proof}

\begin{lemma}\label{smoothconnesthomcommutator2}
	$a(1-(D^\infty_{\alpha})^2) \in B^\infty \rtimes_\alpha \R^n$ for $a \in A^\infty \hookrightarrow M(B^\infty \rtimes_\alpha \R^n)$. 
	\end{lemma}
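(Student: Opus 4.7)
The plan is to compute the ``symbol'' of $(D^\infty_\alpha)^2$ explicitly and show that $1-(D^\infty_\alpha)^2$ is represented by a Schwartz kernel taking values in the Clifford factor $\C_n \subseteq M(B^\infty)$; left-multiplication by $a\in A^\infty$ will then land it inside $B^\infty\rtimes_\alpha\R^n=\mathcal{S}(\R^n,B^\infty)$.

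First I will compute $(D^\infty_\alpha)^2$. Since $\Sigma(\xi)=1\otimes\chi(c(\xi))$ is independent of $x$ and takes values in $\C_n$, on which $\alpha$ acts trivially, we have $\alpha_x(\widehat{\Sigma}(y))=\widehat{\Sigma}(y)$, so
\[
(D^\infty_\alpha)^2=\iint \widehat{\Sigma}(x)V_x\widehat{\Sigma}(y)V_y\,dx\,dy=\iint\widehat{\Sigma}(x)\widehat{\Sigma}(y)V_{x+y}\,dx\,dy=\int\widehat{\Sigma^2}(z)V_z\,dz.
\]
Using the Clifford identity $c(\xi)^2=\|\xi\|^2$ together with the oddness of $\chi$, functional calculus yields
\[
\Sigma(\xi)^2=\chi\bigl(c(\xi)\bigr)^2=\chi^2(\|\xi\|)\,I_{\C_n}.
\]
For the normalising function constructed in \cite[Example 10.9.3]{Higson-Roe:analytic-K}, the function $\tau(\xi):=\bigl(1-\chi^2(\|\xi\|)\bigr)I_{\C_n}$ is Schwartz on $\R^n$ with values in $\C_n$ (by the same argument as for Lemma~\ref{prop:Sigma}, cf.\ Appendix~\ref{app:schwarz}), so its Fourier transform $\widehat{\tau}$ is Schwartz with values in $\C_n$ as well.

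Combining these with the identity $V_0=1$ rewrites the previous step as an equality of multipliers
\[
1-(D^\infty_\alpha)^2=\int\widehat{\tau}(z)V_z\,dz.
\]
Finally I left-multiply by $a\in A^\infty$: since $a\in A^\infty\subseteq M(B^\infty)$ and $\widehat{\tau}(z)\in\C_n\subseteq M(B^\infty)$ sit in the two commuting tensor factors of $B^\infty=A^\infty\otimes\C_n$, the product $a\cdot\widehat{\tau}(z)=a\otimes\widehat{\tau}(z)$ lies in $B^\infty$ for every $z$, and is Schwartz in $z$. Therefore
\[
a\bigl(1-(D^\infty_\alpha)^2\bigr)=\int a\,\widehat{\tau}(z)\,V_z\,dz\in\mathcal{S}(\R^n,B^\infty)=B^\infty\rtimes_\alpha\R^n,
\]
as required.

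The main obstacle I anticipate is making the symbol-product identity rigorous: $\Sigma$ is an order-zero symbol, so $\widehat{\Sigma}$ is only a tempered distribution and the displayed integrals are oscillatory. I expect to handle this exactly as in the proof of Lemma~\ref{lem:commutator}, regularising by a Gaussian factor $e(-\epsilon\|\xi\|^2/2)$ and passing to the limit $\epsilon\to 0^+$ within Connes' pseudodifferential calculus reviewed in Appendix~\ref{sec:calculus}. The crucial simplification throughout is that $\Sigma$ has no $x$-dependence, so every correction term in the symbol composition formula vanishes and no lower-order terms need be tracked.
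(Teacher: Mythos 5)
Your proof is correct and takes essentially the same route as the paper: both reduce to showing that $1-\Sigma^2$ is a Schwartz (order $-\infty$) symbol because $1-\chi^2$ is Schwartz on $\R$, so that $1-(D^\infty_\alpha)^2=\int\widehat{1-\Sigma^2}(x)V_x\,dx\in B^\infty\rtimes_\alpha\R^n$ and left-multiplication by $a\in A^\infty$ stays there. The only cosmetic difference is that you factor $\Sigma(\xi)^2=\chi^2(\|\xi\|)I_{\C_n}$ via $c(\xi)^2=\|\xi\|^2$, whereas the paper keeps $\chi(c(\xi))^2$ unexpanded; both rest on the same observation about the normalising function.
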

	
	\begin{proof}
		
	Noting that
		 $\hat{\Sigma} * \hat{\Sigma} $ is well-defined as a distribution, we have
		$$(1-(D^\infty_{\alpha})^2) = \int_{\R^n} (\widehat{1-\Sigma^2}(x))V_{x} dx.$$
		Now $$(1-\Sigma^2)(\xi) = 1 \otimes \big(1-\chi(c(\xi))^2\big).$$ 

As the function $s\widehat{\chi}(s)$ is a smooth function with compact support, the corresponding Fourier transform, $\frac{d \chi}{d\lambda}$, is a Schwartz function. Furthermore, $\chi^2(\lambda)\to 1$ as $\lambda \to \pm \infty$. It follows that $\chi^2(\lambda)-1$ is a Schwartz function. Therefore,  $(1-\Sigma^2)$ is a  Schwartz function and hence $-\infty$ order symbol. So the element $\int_{\R^n} (\widehat{1-\Sigma^2}(x))V_{x} dz	$ is in $B^\infty\rtimes_\alpha \R^n$ which proves our claim. 
	\end{proof}
	
\begin{lemma}\label{lem:g}
	$g\cdot D^\infty_{\alpha} = D^\infty_{\alpha}$.
\end{lemma}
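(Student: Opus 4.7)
The plan is to reduce $G$-invariance of $D_\alpha^\infty$ to $G$-invariance of its symbol $\Sigma$, which in turn follows from the compatibility of the orthogonal $G$-action on $\mathbb{R}^n$ with the spinorial $G$-action on $\mathbb{C}_n$.

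First I would unpack what $g\cdot D_\alpha^\infty$ means. The action of $G$ on $B^\infty\rtimes_\alpha\mathbb{R}^n$ comes from the semi-direct product condition $\beta_g\alpha_x=\alpha_{\rho_g(x)}\beta_g$; extended to the multiplier algebra it satisfies $g\cdot V_x = V_{\rho_g(x)}$, while on $B^\infty$-valued Schwartz functions it is $(g\cdot F)(x)=\beta_g(F(\rho_g^{-1}x))$. Plugging these into the defining integral
\[
g\cdot D_\alpha^\infty = \int_{\mathbb{R}^n} \beta_g(\widehat{\Sigma}(x))\,V_{\rho_g(x)}\,dx,
\]
and performing the change of variable $y=\rho_g(x)$ (whose Jacobian is $1$ because $\rho_g$ is orthogonal) yields
\[
g\cdot D_\alpha^\infty = \int_{\mathbb{R}^n} \beta_g\bigl(\widehat{\Sigma}(\rho_g^{-1}y)\bigr)\,V_{y}\,dy = \int_{\mathbb{R}^n} (g\cdot\widehat{\Sigma})(y)\,V_y\,dy.
\]

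Next I would invoke Lemma~\ref{lem:equivariant-symbol} to identify $g\cdot\widehat{\Sigma}$ with $\widehat{g\cdot\Sigma}$. Therefore it suffices to show $g\cdot\Sigma=\Sigma$ as a symbol. Since $\Sigma(\xi)=1\otimes\chi(c(\xi))$ and $\beta_g$ acts trivially on $A^\infty$ in the first tensor factor but via the spinorial extension on $\mathbb{C}_n$, the key identity is $\beta_g(c(\eta)) = c(\rho_g(\eta))$. Combined with the fact that $\beta_g$ (as an algebra automorphism) commutes with the functional calculus defining $\chi$, namely
\[
\beta_g\bigl(\chi(c(\eta))\bigr)=\int_\mathbb{R}\widehat{\chi}(s)\,e^{is\,\beta_g(c(\eta))}\,ds = \chi\bigl(c(\rho_g(\eta))\bigr),
\]
we compute
\[
(g\cdot\Sigma)(\xi) = \beta_g\bigl(\Sigma(g^t\xi)\bigr) = 1\otimes \chi\bigl(c(\rho_g\cdot g^t\xi)\bigr) = 1\otimes\chi(c(\xi)) = \Sigma(\xi),
\]
where in the penultimate step I use $g^t=\rho_g^{-1}$ because $\rho_g$ is orthogonal with respect to the standard metric on $\mathbb{R}^n$.

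The only subtlety is that the integrals defining $D_\alpha^\infty$ and $\widehat{\Sigma}$ are oscillatory and must be interpreted distributionally, so one should verify that the change of variable $y=\rho_g(x)$ is legal at the distributional level. This is handled exactly as in the regularisation remark following Lemma~\ref{lem:equivariant-symbol}: after inserting the $G$-invariant Gaussian damping $e(-\epsilon\|\xi\|^2/2)$, all manipulations are on absolutely convergent integrals, and the $G$-invariance of the regulariser lets us pass to the limit $\epsilon\to 0$. This is the only real technical point; the algebraic identity $g\cdot\Sigma=\Sigma$ is essentially immediate from orthogonality of $\rho_g$.
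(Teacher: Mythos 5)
Your proof is correct and follows essentially the same route as the paper's: reduce $G$-invariance of $D_\alpha^\infty$ to the identity $\widehat{\Sigma}(gx)=\beta_g(\widehat{\Sigma}(x))$ via a change of variables with Jacobian $1$, and deduce that identity from the $G$-equivariance of the symbol $\Sigma$. The paper proves the required Fourier-transform step from scratch rather than citing Lemma~\ref{lem:equivariant-symbol}, and leaves the Clifford compatibility $\beta_g(c(\eta))=c(\rho_g(\eta))$ implicit, but the content is the same.
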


\begin{proof}
We start with checking $\widehat{\Sigma}(gx) = g\cdot \widehat{\Sigma}(x).$ Indeed
\begin{align*}
\widehat{\Sigma}(gx) &=\int_{{\R_n}}\Sigma(\xi)e(-\langle gx, \xi  \rangle)d\xi
\\&=\int_{{\R_n}}\Sigma(\xi)e(-\langle x, g^t\xi  \rangle)d\xi
\\&=\int_{{\R_n}}\Sigma((g^t)^{-1}\xi)e(-\langle x, \xi  \rangle)d\xi
\\&=\int_{{\R_n}}\Sigma(g\cdot \xi)e(-\langle x, \xi  \rangle)d\xi
\\&=\int_{{\R_n}}\beta_g(\Sigma(\xi))e(-\langle x, \xi  \rangle)d\xi
\\&=g\cdot \widehat{\Sigma}(x).
\end{align*}

Using the above property, we verify the following equality.
\begin{align*}
g\cdot D^\infty_{\alpha} &=\int_{\R^n} g\cdot \widehat{\Sigma}(x)V_{gx} dx
\\ &= \int_{\R^n} \widehat{\Sigma}(gx)V_{gx} dx 
\\ &= \int_{\R^n} \widehat{\Sigma}(x)V_x dx 
\\ &= D^\infty_{\alpha}.
\end{align*}
\end{proof}

As the above locally convex Kasparov module $\big(M(B^\infty\rtimes \mathbb{R}^n), \iota, D^\infty_\alpha\big)$ is $G$-invariant, we introduce the following Kasparov module for the corresponding crossed product algebras. Consider $M\big((B^\infty\rtimes _\alpha \mathbb{R}^n)\rtimes G\big)$ with the natural inclusion map $\iota: A^\infty\rtimes G\to M\big((B^\infty \rtimes_\alpha \mathbb{R}^n)\rtimes G\big)$. The grading operator $\epsilon$ on $M(B^\infty\rtimes \mathbb{R}^n)$ extends to a grading operator on $M\big((B^\infty\rtimes \mathbb{R}^n)\rtimes G\big)$ defined by
\[
\tilde{\epsilon}(f)(g)=\epsilon\big(f(g)\big)),\ \forall g\in G.
\]

The operator $D^\infty_{\alpha}$ extends to an element $\widetilde{D}^\infty_{\alpha}$ in $M\big((B^\infty\rtimes \mathbb{R}^n)\rtimes G\big)$ by
\[
\widetilde{D}^\infty_\alpha(f)(g)=D^\infty_\alpha(f(g)),\ \forall g\in G.
\] 

With the above preparation, we directly conclude the following proposition by Lemma \ref{lem:commutator}, \ref{smoothconnesthomcommutator2}, \ref{lem:g}. 
\begin{proposition}\label{prop:Dirac}
The multiplier $\widetilde{D}^\infty_\alpha$ of the algebra $(B^{\infty}\rtimes \R^n)\rtimes G$ defines a locally convex Kasparov module $(M\big((B^{\infty}\rtimes \R^n)\rtimes G\big), \iota, \widetilde{D}^\infty_{\alpha} )$ in $AKM\big(A^{\infty}\rtimes G, (B^{\infty}\rtimes_{\alpha}\R^n)\rtimes G\big)$, which we will denote by $y^\infty_{n, \alpha}$. 
\end{proposition}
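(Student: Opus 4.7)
The plan is to verify the four bullet-point conditions in the definition of a locally convex Kasparov module. Three preliminary observations set the stage: the grading element $\tilde\epsilon$ squares to $1$ because $\epsilon$ does; $\widetilde{D}^\infty_\alpha$ is odd since $c(\xi)$ is odd on $\C_n$ and $\chi$ is an odd normalizing function, so $\Sigma(\xi)=1\otimes\chi(c(\xi))$ is odd, hence so is $D^\infty_\alpha$ and therefore $\widetilde{D}^\infty_\alpha$; and by Lemma \ref{lem:g}, the formula $\widetilde{D}^\infty_\alpha(f)(g):=D^\infty_\alpha(f(g))$ genuinely defines a multiplier of $(B^\infty\rtimes\R^n)\rtimes G$, the $G$-invariance of $D^\infty_\alpha$ being exactly what is needed to make the formula compatible with the crossed-product multiplication.

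The first two continuity conditions are automatic. The maps $b\mapsto b\widetilde{D}^\infty_\alpha,\,\widetilde{D}^\infty_\alpha b,\,\tilde\epsilon b,\,b\tilde\epsilon$ and $(a,b)\mapsto \iota(a)b,\,b\iota(a)$ land in $(B^\infty\rtimes_\alpha\R^n)\rtimes G$ because $\widetilde{D}^\infty_\alpha$, $\tilde\epsilon$, and $\iota(a)$ are all multipliers of this locally convex algebra, and continuity is then the joint continuity of the multiplier action.

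For the two substantive conditions I would first reduce from $A^\infty\rtimes G$ back to $A^\infty$. For $a=\sum_{g\in G}a_g\cdot g\in A^\infty\rtimes G$, Lemma \ref{lem:g} yields $\widetilde{D}^\infty_\alpha\cdot g=g\cdot \widetilde{D}^\infty_\alpha$, so
\[
[\iota(a),\widetilde{D}^\infty_\alpha]=\sum_g[a_g,D^\infty_\alpha]\cdot g,\qquad \iota(a)\bigl(1-(\widetilde{D}^\infty_\alpha)^2\bigr)=\sum_g a_g\bigl(1-(D^\infty_\alpha)^2\bigr)\cdot g.
\]
By Lemmas \ref{lem:commutator} and \ref{smoothconnesthomcommutator2} each summand lies in $B^\infty\rtimes_\alpha\R^n$, hence the whole expressions lie in $(B^\infty\rtimes_\alpha\R^n)\rtimes G$; the left-multiplied versions $\bigl(1-(\widetilde{D}^\infty_\alpha)^2\bigr)\iota(a)$ are handled in the same way.

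The main obstacle is the continuity estimate for the reduced maps $A^\infty\to B^\infty\rtimes_\alpha\R^n$ sending $a\mapsto[a,D^\infty_\alpha]$ and $a\mapsto a\bigl(1-(D^\infty_\alpha)^2\bigr)$. I would revisit the proofs of Lemmas \ref{lem:commutator} and \ref{smoothconnesthomcommutator2} with continuity in view. For the commutator, the identity $[a,D^\infty_\alpha]=\sum_i\int\widehat{\Sigma}(x)\,x^i\,G_i(x)\,V_x\,dx$ with $G_i(x)=\int_0^1\partial_{x^i}\alpha_{sx}(a)\,ds$ realises $a\mapsto G_i$ as a continuous map from $A^\infty$ into $C^\infty(\R^n,B^\infty)$, since the seminorms on $A^\infty$ by construction control derivatives of $\alpha_x(a)$ in $x$. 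Combined with the Schwartz property of $\widehat{\Sigma}(x)x^i=-i\,\widehat{\partial_{\xi_i}\Sigma}(x)$ furnished by Lemma \ref{prop:Sigma}, a routine convolution estimate delivers continuity into $B^\infty\rtimes_\alpha\R^n$. The same template, using that $1-\Sigma^2$ is Schwartz, dispatches the $\bigl(1-(D^\infty_\alpha)^2\bigr)$ condition. Finally, since the seminorms on $A^\infty\rtimes G$ dominate the seminorms on each component $a_g$, continuity transfers directly from $A^\infty$ to $A^\infty\rtimes G$, completing the verification.
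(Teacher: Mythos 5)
Your argument is correct and follows the paper's own route: the paper's proof of this proposition is precisely the remark that it follows directly from Lemmas \ref{lem:commutator}, \ref{smoothconnesthomcommutator2}, and \ref{lem:g}, which are exactly the three inputs you invoke after reducing from $A^\infty\rtimes G$ to $A^\infty$ via the $G$-invariance of $D^\infty_\alpha$. Your explicit treatment of the continuity estimates (which the paper leaves implicit in "directly conclude") is a sound unpacking, not a different approach.
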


Applying Proposition \ref{prop:Dirac} to the dual action  $\hat{\alpha}$ of $\mathbb{R}_n$ on $(B^{\infty}\otimes \C_n) \rtimes_{\alpha}\R^n$, we obtain an element 
\[
\begin{split}
\Big(M\Big(\big(((B^{\infty}\otimes \C_n) \rtimes_{\alpha} \R^n)&\rtimes_{\hat{\alpha}}\mathbb{R}_n\big)\rtimes G\Big), \iota, D^\infty_{\hat{\alpha}} \Big) \\
&\in 
AKM\Big((B^{\infty}\rtimes_{\alpha}\R^n)\rtimes G, \big(((B^{\infty}\otimes \C_n)\rtimes_{\alpha}\R^n) \rtimes_{\hat{\alpha}} \mathbb{R}_n\big)\rtimes G \Big). 
\end{split}
\]
Using the $G$-equivariant Takesaki--Takai duality (Theorem \ref{thm:takai}), we have the isomorphism
\[
 \big(((B^{\infty}\rtimes \C_n) \rtimes_{\alpha}\R^n) \rtimes_{\hat{\alpha}} \mathbb{R}_n\big)\rtimes G\cong (A  \otimes \C_{2n} \otimes  \mathcal{K}^\infty)\rtimes G. 
\]
With the above isomorphism, we get a Dirac element 
\[
x^{{\infty}}_{n,\alpha} \in AKM\big((B^{\infty}\rtimes_{\alpha}\R^n)\rtimes G,(A^{\infty}\otimes \C_{2n} \otimes \mathcal{K}^\infty)\rtimes G\big).
\]

In summary, we have constructed in this section a (dual) Dirac element for an $\R^n$-action, i.e. 
\[
\begin{split}
x^{{\infty}}_{n,\alpha} &\in AKM\big((B^{\infty}\rtimes_{\alpha}\R^n)\rtimes G, (A^{\infty}\otimes \C_{2n} \otimes \mathcal{K}^\infty)\rtimes G\big),\\
y^{{\infty}}_{n,\alpha} &\in AKM\big(A^{\infty}\rtimes G, (B^{\infty}\rtimes_{\alpha}\R^n)\rtimes G\big).
\end{split}
\]
We next prove $\HH(y^{{\infty}}_{n,\alpha})$ and
$\HH(x^{{\infty}}_{n,\alpha})$ are inverses to each other for a split exact, diffotopy invariant, $\mathcal{K}^\infty$ stable functor $\HH$. 

\subsection{Equivariant Bott periodicity}\label{subsec:bott}
When  $\alpha$ is  trivial, we call the elements $y^{{\infty}}_{n,\alpha}$ and $x^{{\infty}}_{n,\alpha}$ by $y^{{\infty}}_{n}$ and  $x^{{\infty}}_{n}$ respectively. If we take the extension of the elements $y^{{\infty}}_{n}$ and  $x^{{\infty}}_{n}$  to the corresponding C*-algebras to get elements in $\KK\Big(\big(B\otimes C_0(\R^n)\big)\rtimes G,A\rtimes G\Big)$ and $\KK\Big(A\rtimes G,\big(B\otimes C_0(\R^n)\big)\rtimes G\Big)$, we get the usual class of Kasparov Dirac-dual Dirac element, since the usual class and the extended class are compact perturbation to each other.

\begin{remark}
	Grensing \cite{Grensing12} defined similar elements. Our $y^{{\infty}}_{n,\alpha}$ and $x^{{\infty}}_{n,\alpha}$  are slightly different from his. But as we want to use  $\mathcal{K}^\infty$-stability rather than stability with the Schatten ideals, we prefer to work with this modified class. Grensing's class and our class match up to compact perturbation if we view the classes as KK elements (with the obvious extensions) in the corresponding C*-algebras.
\end{remark}

\begin{theorem}\label{bott-periodicity}
	 Let $\HH$ be a split-exact, diffotopy invariant, $\mathcal{K}^\infty$ stable functor. Then for the trivial action $\alpha,$  $\HH(x^{{\infty}}_{n,\alpha})$ and $\HH(y^{{\infty}}_{n,\alpha})$ are inverse to one another.
\end{theorem}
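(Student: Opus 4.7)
The plan is to reduce to the case $A=\C$ and then invoke $G$-equivariant Bott periodicity in Kasparov's $KK^G$-theory, using Proposition \ref{prop:G-index} to transfer the identity from the $C^*$-algebraic setting into the locally convex one.

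First, for trivial $\alpha$ the crossed product simplifies to $B^\infty\rtimes_\alpha\R^n\cong B^\infty\otimes\mathcal{S}(\R^n)$, so $x^\infty_{n,\alpha}$ and $y^\infty_{n,\alpha}$ are obtained from the universal classes $x^\infty_n$ and $y^\infty_n$ (the case $A=\C$) by exterior product with $A^\infty\rtimes G$. By naturality of quasihomomorphisms (Proposition \ref{quasirules}) and functoriality of $\HH$ under tensoring by a $G$-locally convex algebra on which $\R^n$ acts trivially, it is enough to treat $A=\C$.

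Next I would compute the composition $y^\infty_n\sharp x^\infty_n\in AKM\bigl(\C G,(\C_{2n}\otimes\mathcal{K}^\infty)\rtimes G\bigr)$, observing that its underlying $C^*$-algebraic Kasparov data is, up to compact perturbation, the descent of the standard $G$-equivariant Dirac Kasparov module on $L^2(\R^n,\wedge^\bullet\C^n)$; this is the content of the Kasparov product formula applied to the Dirac and dual Dirac symbols built in Subsection \ref{subsec:dirac}. Proposition \ref{prop:G-index} then identifies $\HH(y^\infty_n\sharp x^\infty_n)$ with $\operatorname{index}_G(D)\circ\theta_*$. Kasparov's equivariant Bott periodicity \cite{MR1388299} gives $\operatorname{index}_G(D)=[1]\in R(G)$, and combined with Clifford 2-periodicity and the isomorphism $\theta_*$ from $\mathcal{K}^\infty$-stability, this yields $\HH(x^\infty_n)\circ\HH(y^\infty_n)=\id$ under the appropriate identification of source and target.

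For the reverse composition I would apply a $G$-equivariant rotation homotopy in the spirit of Atiyah: the Kasparov product in the opposite order, when realised on $\R^n\times\R^n$, is $G$-equivariantly diffotopic to a flip and hence, after one further rotation by $\pi/2$, to a $\mathcal{K}^\infty$-stabilised identity. Translating this into the locally convex framework produces a $G$-equivariant diffotopy of quasihomomorphisms to which diffotopy invariance and $\mathcal{K}^\infty$-stability of $\HH$ can be applied. The main obstacle will be realizing this rotation homotopy at the level of smooth/locally convex algebras rather than $C^*$-algebras while remaining $G$-equivariant: one must keep the interpolating family of symbols inside the order-zero class to which Connes' pseudodifferential calculus applies and simultaneously respect the $G$-action on $\R^n$ together with the induced action on the Clifford component. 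Once this compatibility is in place, based on Lemma \ref{lem:equivariant-symbol} and the analytic estimates recorded in Appendix \ref{app:schwarz}, the argument follows the blueprint of Grensing's non-equivariant Bott periodicity \cite{Grensing12}.
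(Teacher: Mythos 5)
Your proposal follows essentially the same route as the paper's proof: reduce (implicitly in the paper, explicitly in your reduction to $A=\C$) to the $(\C G,\mathcal{K}^\infty\rtimes G)$ setting, apply Proposition \ref{prop:G-index} together with Kasparov's equivariant Bott periodicity to evaluate one composition via the $G$-index, and handle the other composition via the $G$-equivariant rotation argument following the two parts of \cite[Theorem 57]{Grensing12}. Your added remark making the $A=\C$ reduction explicit is a helpful clarification of a step the paper leaves implicit, but the overall strategy is the same.
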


\begin{proof}

We notice that the elements in 
\[
\KK\Big(\big(B\otimes C_0(\R^n)\big)\rtimes G,A\rtimes G\Big)\ \text{and}\ \KK\Big(A\rtimes G,\big(B\otimes C_0(\R^n)\big)\rtimes G\Big)
\]
are the usual Kasparov Dirac and dual Dirac elements. Now let $y$ and $x$ are the quasihomomorphisms associated to $y^{{\infty}}_{n}$ and  $x^{{\infty}}_{n}$ respectively. We wish to calculate $\HH(y) \circ \HH(x)$. Now let $\gamma$ be the product of these two elements whose index is 1 (using Kasparov's Bott periodicity \cite{MR1388299}). And therefore the element $\gamma$  has a trivial $G$ index. And the equation $\HH(y^{{\infty}}_{n}) \circ \HH(x^{{\infty}}_{n}) = 1$ follows from the second part of the proof of \cite[Theorem 57]{Grensing12} using  the standard ($G$-equivariant) rotation argument, Proposition \ref{prop:G-index}, and $\mathcal{L}^p$ replaced by $\mathcal{K}^\infty$ . 

Now using the arguments similar to Grensing's (the first part of \cite[Theorem 57]{Grensing12}) and Proposition \ref{prop:G-index}, we can similarly prove $\HH(x^{{\infty}}_{n}) \circ \HH(y^{{\infty}}_{n}) = 1. $

\end{proof}

As a corollary of Theorem \ref{bott-periodicity}, when $A=\mathbb{C}$, we have the following version of Bott-periodicity. 

\begin{corollary}\label{cor:bott}Let $\HH$ be a split-exact, diffotopy invariant, $\mathcal{K}^\infty$-stable functor. Then for the trivial $\alpha,$  $\HH(x^{{\infty}}_{n,\alpha})$ and $\HH(y^{{\infty}}_{n,\alpha})$ establish an isomorphism between $\HH(\mathbb{C}G)$ and $\HH\Big(\big(\C_n\otimes \mathcal{S}(\mathbb{R}^n)\big)\rtimes G\Big)$.
\end{corollary}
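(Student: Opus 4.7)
The plan is to obtain the corollary as a direct specialization of Theorem \ref{bott-periodicity} to the trivial $C^*$-algebra $A = \C$ equipped with the trivial $\R^n$-action. First I would unpack the relevant algebras under this substitution. Since every vector is smooth for the trivial action, $A^\infty = \C$, and hence $A^\infty \rtimes_\beta G = \C G$ (the $G$-action on the one-dimensional algebra is necessarily trivial). The extended algebra $B^\infty = A^\infty \otimes \C_n$ collapses to $\C_n$, and because $\alpha$ acts trivially on $B^\infty$, the twisted convolution product in $B^\infty \rtimes_\alpha \R^n$ reduces to the ordinary convolution product, yielding $B^\infty \rtimes_\alpha \R^n = \C_n \otimes \mathcal{S}(\R^n)$. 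Forming the further crossed product with the diagonal $G$-action gives $\bigl(\C_n \otimes \mathcal{S}(\R^n)\bigr)\rtimes G$, which is precisely the algebra appearing on the right-hand side of the claimed isomorphism.

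Next I would note that under these identifications the dual Dirac element $y^\infty_{n,\alpha}$ produced in Proposition \ref{prop:Dirac} is exactly the class $y^\infty_n \in AKM\bigl(\C G,\,(\C_n \otimes \mathcal{S}(\R^n))\rtimes G\bigr)$, and similarly $x^\infty_{n,\alpha}$ becomes the element $x^\infty_n$ of the opposite direction constructed from it via the $G$-equivariant Takesaki--Takai duality (Theorem \ref{thm:takai}). Applying Theorem \ref{bott-periodicity} then gives that $\HH(y^\infty_n)$ and $\HH(x^\infty_n)$ are mutually inverse, so in particular
\[
\HH(y^\infty_n)\colon \HH(\C G)\;\xrightarrow{\;\cong\;}\;\HH\bigl((\C_n\otimes \mathcal{S}(\R^n))\rtimes G\bigr)
\]
is an isomorphism with inverse $\HH(x^\infty_n)$, which is the asserted Bott periodicity.

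The only nontrivial point to verify is the identification of the target of $\HH(x^\infty_n)$. A priori, after invoking Takesaki--Takai duality, $x^\infty_n$ lands in $\HH\bigl((\C\otimes \C_{2n}\otimes \mathcal{K}^\infty)\rtimes G\bigr)$. Here I would use $\mathcal{K}^\infty$-stability of $\HH$ to remove the smooth compacts, and the graded Morita equivalence between $\C_{2n}$ and $\C$ (together with its $G$-equivariant refinement from Appendix \ref{sec:equi_Grensing}, which endows $\HH(\mathcal{K}^\infty \rtimes G)$ with the correct $R(G)$-module structure) to identify $\HH\bigl((\C_{2n}\otimes \mathcal{K}^\infty)\rtimes G\bigr)$ with $\HH(\C G)$. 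This is the place where one could expect technical work, but since all these pieces are already in place in the preceding sections and appendices, the identification is essentially bookkeeping. Once that is done, the corollary follows as a direct transcription of Theorem \ref{bott-periodicity}.
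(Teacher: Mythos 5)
Your proof is correct and takes the same route as the paper, which presents Corollary~\ref{cor:bott} as an immediate specialization of Theorem~\ref{bott-periodicity} to $A=\mathbb{C}$ with trivial $\alpha$; you have simply made explicit what the algebras, the two Kasparov elements, and the Takesaki--Takai identification become in that case. One small correction to your last paragraph: the $G$-equivariant identification of $\HH\bigl((\C_{2n}\otimes\mathcal{K}^\infty)\rtimes G\bigr)$ with $\HH(\C G)$ is not what the $R(G)$-module structure of Appendix~\ref{sec:equi_Grensing} supplies; rather it comes from $\mathcal{K}^\infty$-stability together with the fact that the diagonal $G$-action on $\R^n\oplus\R_n\cong\C^n$ is complex-linear, so $\C_{2n}$ carries a $G$-equivariant spinor module and hence a $G$-equivariant graded Morita equivalence with $\C$.
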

\subsection{Proof of the equivariant Connes--Thom isomorphism}\label{subsec:proof-Thom}
With the development in Section \ref{subsec:dirac} and \ref{subsec:bott}, we are ready to prove Theorem \ref{thm:equ-thom}. 

\begin{proof}
Define a deformation $\alpha^s$ of the action $\alpha$ for $s\in
[0,1]$. Define $\alpha^s:\R^n\times A^\infty \to A^\infty $ by
\[
\alpha^s_x(a):=\alpha_{sx}(a),\qquad a\in A, x\in \R^n.
\]
Then the same formulas as above define 
\[
\begin{split}
y^{{\infty}}_{n,\alpha^s}&\in AKM\Big(A^{\infty}\rtimes G, \big(B^{\infty}\rtimes_{\alpha^s}\R^n\big)\rtimes G\Big),\\
x^{{\infty}}_{n,\alpha^s}&\in AKM\Big(\big(B^{\infty}\rtimes_{\alpha^s} \R^n\big)\rtimes G, \big(A^{\infty}\otimes \C_{2n} \otimes \mathcal{K}^{\infty}\big)\rtimes G\Big). 
\end{split}
\]
Now let $y_s:=y^{{\infty}}_{n,\alpha^s} = (\beta_s,\bar{\beta}_s)$ and $x_s:=x^{{\infty}}_{n,\alpha^s} = (\hat{\beta}_s,\bar{\hat{\beta}}_s)$. Denote the corresponding quasihomomorphisms also by $y_s$ and $x_s$ respectively.

Let us now consider the algebra $D^\infty =\Zyl A^\infty$, and the $\R^n$ action on the algebra $D^\infty$ by $\gamma_s(f)(x) = \alpha_{xs}(f(x)).$ Now we have the following elements:
\begin{enumerate}
\item $$\HH(\ev_s) \in QH \big(D^\infty\rtimes G, A^\infty\rtimes G\big)\quad \text{(since $\ev_s$ is $G$-equivariant)},$$
\item $$ \HH(\widehat{\ev_s}) \in QH\Big(\big((D^\infty\otimes \C_n) \rtimes \R^n\big)\rtimes G ,\big((A^\infty \otimes \C_n) \rtimes \R^n\big)\rtimes G \Big)\quad \text{$(\widehat{\ev_s}$ is extended from $\\ev_s$)},$$
\item $$\HH(\widehat{\widehat{\ev_s}}) = \HH(\ev_s \otimes \id \otimes \id  ) \in QH((D^\infty\otimes \C_{2n} \otimes \mathcal{K}^\infty)\rtimes G, (A^\infty \otimes \C_{2n} \otimes \mathcal{K}^\infty)\rtimes G)$$ (using the Takesaki--Takai isomorphism (Theorem \ref{thm:takai}) and viewing $\ev_s \otimes \id \otimes \id$ as a $G$-equivariant map from $D^\infty\otimes \C_{2n} \otimes \mathcal{K}^\infty$ to $A^\infty \otimes \C_{2n} \otimes \mathcal{K}^\infty$),
\end{enumerate}
together with 
\[
\begin{split}
y^\infty&:= y_{n,\gamma}^\infty = (\gamma, \overline{\gamma})\in QH\Big(D^\infty\rtimes G,\big((D^\infty \otimes \C_n) \rtimes \R^n\big)\rtimes G\Big),\\
x^\infty&:= x_{n,\gamma}^\infty = (\hat{\gamma}, \overline{\hat{\gamma}})\in QH\Big(\big((D^\infty \otimes \C_n) \rtimes \R^n\big)\rtimes G,(D^\infty \otimes \C_{2n} \otimes \mathcal{K}^\infty)\rtimes G \Big),
\end{split}
\]
which satisfy 

$$\HH(\ev_s)\circ \HH(y_s) = \HH(y^\infty) \circ \HH(\widehat{\ev_s}),\ \text{and}\ \HH(x^\infty)\circ \HH(\ev_s) = \HH(\widehat{\ev_s}) \circ \HH(x_s).$$

The above two equations come from the following  diagrams and Proposition \ref{morphofdoublesplit}:

\begin{equation}\label{dia1}\xymatrix{ 0\ar[r]&{\big((A^\infty \otimes \C_n) \rtimes \R^n\big)\rtimes G}\ar[r] &\mathcal{D}_{\hat{\beta}_s}\rtimes G \ar[r]&{(A^\infty \otimes \C_{2n} \otimes \mathcal{K}^\infty)\rtimes G }\ar[r]&0\\
0\ar[r]&{\big((D^\infty \otimes \C_n) \rtimes \R^n\big)\rtimes G}\ar[u]_{\widehat{\ev_s}}\ar[r]^{}&\mathcal{D}_{\hat{\gamma}}\rtimes G\ar[u]_{\overline{\ev_s}}\ar[r]&{(D^\infty \otimes \C_{2n}\otimes \mathcal{K}^\infty)\rtimes G }\ar[u]_{\ev_s \otimes \id \otimes \id } \ar[r]&0
}\end{equation}
and 
\begin{equation}\label{dia2}\xymatrix{ 0\ar[r]&{A^\infty\rtimes G}\ar[r] &\mathcal{D}_{{\beta}_s}\rtimes G \ar[r]&{\big((A^\infty \otimes \C_n )\rtimes \R^n\big)\rtimes G}\ar[r]&0\\
0\ar[r]&{D^\infty\rtimes G}\ar[u]_{\ev_s}\ar[r]^{}&\mathcal{D} _{\gamma}\rtimes G\ar[u]_{\overline{\ev_s}}\ar[r]&{\big((D^\infty \otimes \C_n) \rtimes \R^n\big)\rtimes G}\ar[u]_{\widehat{\ev_s}} \ar[r]&0
}\end{equation}

The above two diagrams are reinterpretations of the naturality of the Thom elements. The diagrams commute and also commute with the double-splits (as in Proposition \ref{morphofdoublesplit}). Note that using $\mathcal{K}^\infty$-stability of $\HH,$  $\HH(\ev_s \otimes \id \otimes \id ) = \HH(\ev_s).$

Now

 	  \begin{align*}
 		 (\HH(y^\infty)\circ \HH(x^\infty)) \circ \HH(\ev_s)   &=  \HH(y^\infty)\circ (\HH(x^\infty) \circ \HH(\ev_s))
 		 \\&= \HH(y^\infty)\circ (\HH(\widehat{\ev_s}) \circ \HH(x_s))
 		\\&= (\HH(y^\infty)\circ \HH(\widehat{\ev_s})) \circ \HH(x_s)
 		  \\&= \HH(\ev_s)\circ \HH(y_s) \circ \HH(x_s).
 	\end{align*}

But using diffotopy invariance of  the $\HH$ functor, we have $\HH(\ev_s)= \HH(\ev_0).$ So   
 		$$ \HH(y_s)\circ \HH(x_s) =  \HH(\ev_0)^{-1} \circ (\HH(y^\infty) \circ \HH(x^\infty))\circ \HH(\ev_0)$$ shows that $ \HH(y_s)\circ \HH(x_s)$ is independent of $s$. Similarly starting with  
$ \HH(x^\infty)\circ \HH(y^\infty) \circ \HH(\widehat{\ev_s}),$ we conclude that  	$ \HH(x_s)\circ \HH(y_s)$ is independent of $s.$ This implies that 

$$\HH(x^{{\infty}}_{n,\alpha})\circ
\HH(y^{{\infty}}_{n,\alpha})= \HH(x^{{\infty}}_{n,\alpha^0})\circ
\HH(y^{{\infty}}_{n,\alpha^0}).$$
The product $\HH(x^{{\infty}}_{n,\alpha})\circ
\HH(y^{{\infty}}_{n,\alpha})$ is reduced to the computation to the Bott-periodicity result, Theorem \ref{bott-periodicity}:

\[
\HH(x^{{\infty}}_{n,\alpha^0})\circ
\HH(y^{{\infty}}_{n,\alpha^0})=1,
\]
where the action of $\R^n$ on $A^\infty$ is trivial. We conclude that
$\HH(x^{{\infty}}_{n,\alpha}) \circ
\HH(y^{{\infty}}_{n,\alpha})$ is equal to $1$ in $QH\big(A^\infty\rtimes G, (A^\infty \otimes \mathcal{K}^\infty)\rtimes G\big).$ And a similar computation shows that
$\HH(y^{{\infty}}_{n,\alpha}) \circ
\HH(x^{{\infty}}_{n,\alpha})$ is 1 in $QH\big((B^\infty\rtimes_{\alpha}\R^n)\rtimes G, (B^\infty \rtimes_{\alpha}\R ^n)\rtimes G\big)$.
\end{proof}

We have the following corollary of Theorem \ref{thm:equ-thom}.  

\begin{corollary}\label{cor:connes-thom-cyclic} With all the notations and conditions as in Theorem \ref{thm:equ-thom}, when $\HH$ is  the periodic cyclic homology functor $\HP_\bullet$, we have
\[
\HP_\bullet(((A^{\infty}\otimes\C_n)\rtimes_\alpha
\R^n)\rtimes G)\cong \HP_\bullet(A^{\infty}\rtimes_{\beta}G ).
\]

\end{corollary}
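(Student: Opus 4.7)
The plan is to obtain this corollary as an immediate consequence of Theorem \ref{thm:equ-thom}. All that needs to be done is to verify that the periodic cyclic homology functor $\HP_\bullet$ satisfies the three hypotheses imposed on $\HH$ in that theorem: split-exactness, diffotopy invariance, and $\mathcal{K}^\infty$-stability on the category of (graded) locally convex algebras. Once these properties are in place, the isomorphism
\[
\HP_\bullet\Big(((A^{\infty}\otimes\C_n)\rtimes_\alpha \R^n)\rtimes G\Big)\cong \HP_\bullet(A^{\infty}\rtimes_{\beta}G)
\]
is obtained by simply specialising Theorem \ref{thm:equ-thom} to $\HH=\HP_\bullet$.

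First, I would note that diffotopy invariance of $\HP_\bullet$ is a classical result going back to Connes and Goodwillie: a smooth homotopy $\phi : \mathcal{A} \to \Zyl \mathcal{B}$ produces, via the Cartan homotopy formula on the $(b,B)$-bicomplex, an explicit chain homotopy between the induced maps on the periodic cyclic complex. The adaptation to locally convex algebras is worked out, for example, in \cite{MR945014} and \cite{MR823176}. Next, split-exactness is a direct consequence of the Cuntz--Quillen excision theorem for periodic cyclic homology of locally convex algebras: any split short exact sequence $0 \to \mathcal{B} \to \mathcal{D} \to \mathcal{A} \to 0$ yields a split six-term exact sequence in $\HP_\bullet$, giving the required direct sum decomposition $\HP_\bullet(\mathcal{D}) \cong \HP_\bullet(\mathcal{B}) \oplus \HP_\bullet(\mathcal{A})$. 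Finally, $\mathcal{K}^\infty$-stability asserts that the corner embedding $\iota : \mathcal{A} \hookrightarrow \mathcal{A} \otimes \mathcal{K}^\infty$ defined by a rank-one idempotent induces an isomorphism on $\HP_\bullet$; this is established by means of the canonical operator trace on $\mathcal{K}^\infty$, which provides an inverse $\operatorname{tr}_\ast$ in periodic cyclic homology, together with a Morita-type argument adapted to the smooth setting.

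The step I expect to be the main obstacle is the verification of $\mathcal{K}^\infty$-stability. While Morita invariance for finite matrix algebras is a standard textbook computation, extending this to the algebra of smoothing operators requires attention to the projective tensor product topology on $\mathcal{A} \otimes \mathcal{K}^\infty$ and to the continuity of the trace pairing on the relevant $(b,B)$-bicomplexes. These verifications, however, are already available in the literature through the work of Cuntz, Meyer, and Grensing on cyclic homology of locally convex algebras, so they may be invoked directly. With all three properties confirmed, the conclusion follows immediately from Theorem \ref{thm:equ-thom}.
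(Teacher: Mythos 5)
Your proposal is correct and follows essentially the same route as the paper: the paper's proof is simply the observation that $\HP_\bullet$ is split-exact, diffotopy invariant, and $\mathcal{K}^\infty$-stable on locally convex algebras (citing Connes's foundational work), after which Theorem \ref{thm:equ-thom} applies verbatim. Your more detailed attribution of each property to the relevant literature (Connes/Goodwillie for diffotopy invariance, Cuntz--Quillen excision for split-exactness, the operator trace for $\mathcal{K}^\infty$-stability) is accurate and simply expands the paper's terse citation.
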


\begin{proof} 
We observe that  periodic cyclic homology, $\HP(-)$, has all the properties which are assumed for the functor $\HH$ in Theorem \ref{thm:equ-thom}, c.f. \cite{MR823176}.  
With the $\mathcal{K}^\infty$-invariance of $\HP$ in the category of locally convex algebra, the result follows from Theorem \ref{thm:equ-thom}. 
\end{proof}


\section{Application to the Rieffel strict deformation quantization}\label{sec:quantization}
In this section, we apply the equivariant Connes--Thom isomorphism Theorem \ref{thm:equ-thom} for locally convex algebras to study cyclic homology of Rieffel's strict deformation quantization. 
Recall that given a strongly continuous action $\alpha$ of $\R^n$ on a $C^*$-algebra $A$, and a skew-symmetric form $J$ on
$\R^n$, Rieffel \cite{Rieffel93} constructed a strict deformation quantization $A_J$ of $A$ via oscillatory integrals, 
\begin{equation}\label{eq:star}
a\times_J b:=\int_{\R^n\times \R^n}
\alpha_{Jx}(a)\alpha_{y}(b)e( x\cdot y) dx dy,
\end{equation}
for $x,y\in \R^n$, and $a,b\in A^\infty$. The first copy of $\R^n$ in $\R^n\times \R^n$ is basically $\R_n$ after the identification of $\R_n$ and $\R^n$ (see the discussion at Page 11 of \cite[Chapter 2]{Rieffel93}.) $\R^n$ acts on $A_J$ by the same action $\alpha$ (\cite[Proposition 2.5]{Rieffel93}), and we denote the smooth vectors for this action by $A^\infty_J.$

We recall the following result about Rieffel's strict deformation quantization from \cite{Nesh14}:
\begin{proposition}\label{nesh:map}
The map $\Theta _J$ from $A^\infty_J \rtimes \R^n$	to $A^\infty \rtimes \R^n$	defined by 
$$
\Theta _J(f)(x) = \int_{\R^n} \alpha_{Jy}(\hat{f}(y))e(x\cdot y)dy
$$
is an isomorphism, where $\hat{f}$ is the Fourier transformation of $f \in \mathcal{S}(\R^n, A)$ and $e(t) := e^{2\pi i t} $.

\end{proposition}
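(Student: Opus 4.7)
The plan is to identify $\Theta_J$ with a composition of elementary Fourier-analytic operations—making bijectivity transparent—and then to verify multiplicativity by a direct calculation after passing to the Fourier side.

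First I would observe that $\Theta_J = \mathcal{F}^{-1} \circ T_J \circ \mathcal{F}$, where $\mathcal{F}$ is the Fourier transform on $\mathcal{S}(\R^n, A^\infty)$ and $T_J$ is the pointwise operator $(T_J u)(y) = \alpha_{Jy}(u(y))$. Since $\alpha$ is a smooth isometric $\R^n$-action on $A^\infty$, $T_J$ is a continuous linear isomorphism of $\mathcal{S}(\R^n, A^\infty)$ with inverse $T_{-J}$. Equivalently, collapsing the $x$-integral in the definition of $\widehat{\Theta_J(f)}(z)$ to a delta in $z$ gives $\widehat{\Theta_J(f)}(z) = \alpha_{Jz}(\hat{f}(z))$, so $\Theta_{-J}$ is a two-sided inverse. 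This already exhibits $\Theta_J$ as a topological linear isomorphism between the underlying Schwartz spaces of $A_J^\infty \rtimes \R^n$ and $A^\infty \rtimes \R^n$.

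Next I would establish multiplicativity, $\Theta_J(f \ast_J g) = \Theta_J(f) \ast \Theta_J(g)$. Using the identity $\widehat{\Theta_J(h)}(z) = \alpha_{Jz}(\hat{h}(z))$, this reduces to an identity of Fourier transforms in $z$. Unfolding Rieffel's oscillatory definition of $\times_J$ inside the twisted convolution yields
\[
\alpha_{Jz}\bigl(\widehat{f\ast_J g}(z)\bigr) = \int\!\!\!\int\!\!\!\int\!\!\!\int \alpha_{J(s+z)}(f(y))\, \alpha_{Jz+t+y}(g(w))\, e(s\cdot t - y\cdot z - w\cdot z)\, ds\, dt\, dy\, dw,
\]
while a direct expansion of $\Theta_J(f) \ast \Theta_J(g)$ followed by Fubini gives
\[
\widehat{\Theta_J(f) \ast \Theta_J(g)}(z) = \int\!\!\!\int \alpha_{Jp}\bigl(\hat{f}(p)\bigr)\, \alpha_{y+Jz}\bigl(\hat{g}(z)\bigr)\, e(y\cdot(p - z))\, dp\, dy.
\]
Substituting the Fourier representations of $\hat{f}, \hat{g}$ into the second expression and carrying out the change of variables $p = s + z$ identifies the two expressions term for term, with the antisymmetry $z \cdot Jz = 0$ eliminating the only possible cross term. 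Beyond this, only the abelian composition rule $\alpha_{u+v} = \alpha_u \alpha_v$ is used.

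The principal technical obstacle is the rigorous manipulation of the oscillatory integrals, whose absolute convergence fails; one cannot naively apply Fubini or change variables under the integral sign. This is handled in the standard Rieffel fashion by inserting a Gaussian regulator $e^{-\epsilon\|\xi\|^2/2}$ to make all integrals absolutely convergent, justifying Fubini and the affine substitutions at the level $\epsilon > 0$, and then passing to $\epsilon \to 0^+$ in the Schwartz topology on $\mathcal{S}(\R^n, A^\infty)$; the integrands have symbol behaviour of order $\le 0$, so these limits exist. Combined with the topological bijectivity established above and the elementary fact that both products extend continuously to the smooth crossed products, this gives the desired isomorphism of locally convex algebras.
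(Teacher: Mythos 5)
The paper does not prove this statement itself: it cites Neshveyev \cite{Nesh14}*{Theorem 1.1} and moves on. Your write-up therefore supplies a self-contained argument where the paper defers to the literature, and the argument you give is sound. Factoring $\Theta_J = \mathcal{F}^{-1}\circ T_J\circ \mathcal{F}$, where $(T_J u)(y)=\alpha_{Jy}(u(y))$, is exactly what the displayed formula for $\Theta_J$ encodes; continuity of $T_J$ on $\mathcal{S}(\R^n,A^\infty)$ holds because $\alpha$ is isometric on $A^\infty$ and the generators $\delta_k$ act continuously, so the derivatives $\partial_{y_j}\bigl[\alpha_{Jy}(u(y))\bigr]=\alpha_{Jy}\bigl(\textstyle\sum_k J_{kj}\delta_k u(y)+\partial_{y_j}u(y)\bigr)$ stay Schwartz. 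The multiplicativity check also goes through: starting from your left-hand expression, substituting the Fourier representations $f(y)=\int\hat f(p)e(p\cdot y)\,dp$, $g(w)=\int\hat g(q)e(q\cdot w)\,dq$, integrating out $w$ to force $q=z$, passing to $r=t+y$, and then integrating out $y$ to force $p=s+z$ collapses the fourfold integral to $\int\!\!\int \alpha_{J(s+z)}(\hat f(s+z))\,\alpha_{Jz+r}(\hat g(z))\,e(s\cdot r)\,ds\,dr$, which is precisely your expression for $\widehat{\Theta_J(f)\ast\Theta_J(g)}(z)$. (In this particular parametrization I did not actually need the cancellation $z\cdot Jz=0$; it only appears in alternative routes that do not eliminate $q$ first. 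It is harmless, but you may wish to double-check that it is really where your computation uses it.) The regularization caveat is correctly acknowledged and is handled in the standard way with a Gaussian cutoff. In short, you have reconstructed what the cited Neshveyev isomorphism amounts to, by the natural route that the formula itself dictates; the paper buys brevity by citing, and you buy transparency by computing.
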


\begin{proof}
	See \cite[Theorem 1.1]{Nesh14}.
\end{proof}

Let $G$ be a finite group acting on $A$ as in Section \ref{subsec:main-setup}. This means that $G$  acts  on $A$ by $\beta$ satisfying
\[
\beta_g\alpha_x=\alpha_{\rho_g(x)}\beta_g,\ \ \ \text{for any}\ g\in
G, x\in \R^n.
\]

We compute
\begin{align*}
\beta_g(a)\times_J \beta_g(b)&= \int_{\R^n\times \R^n}
\alpha_{Jx}(\beta_g(a))\alpha_{y}(\beta_g(b))e( x\cdot y) dx dy				
\\ &= \int_{\R^n\times \R^n}
\alpha_{(g^t)^{-1}Jg^{-1}x}\beta_g(a)\alpha_{gy} \beta_g(b)e(x\cdot gy) dx dy
\\ &= \int_{\R^n\times \R^n}
\alpha_{(g^t)^{-1}Jx}\beta_g(a)\alpha_{gy} \beta_g(b)e( gx\cdot gy) dx dy
\\ &= \int_{\R^n\times \R^n}
\beta_g \alpha_{Jx}(a)\beta_g \alpha_{y}(b)e( x\cdot y) dx dy
\\ &= \beta_g(a \times_J b).
\end{align*}
The above computation shows that the action of $G$ on $A_J$ by $\beta$ is well defined. So we get a $G$ action on $A^\infty_J \rtimes \R^n$ and $A^\infty \rtimes \R^n$. Abusively we call both actions by $\beta$ again.

The following property ensures that we can make the isomorphism in Proposition \ref{nesh:map} $G$-equivariant.

\begin{proposition}\label{chak:equivariant}
With the notations introduced in Proposition \ref{nesh:map},
$$
\beta_g(\Theta_J(f))= \Theta_J(\beta_g(f))
$$
\end{proposition}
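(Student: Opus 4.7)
The plan is to unfold both sides of the claimed identity into their defining oscillatory integrals and match them after one change of variable, using systematically the three compatibilities at our disposal: the covariance $\beta_g\alpha_x = \alpha_{\rho_g(x)}\beta_g$, the orthogonality of $\rho_g$ (so $\rho_g^t = \rho_g^{-1}$), and the hypothesis $\rho_g^t J \rho_g = J$ which, combined with orthogonality, yields the key commutation $J\rho_g = \rho_g J$.

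First I would pin down the $G$-action on $A^\infty\rtimes_\alpha \R^n$ induced by $(\beta,\alpha,\rho)$. From $\beta_g(V_x) = V_{\rho_g(x)}$, which is forced by the covariance condition applied to the canonical unitaries $V_x$, one finds
\[
(\beta_g f)(x) \;=\; \beta_g\bigl(f(\rho_g^{-1} x)\bigr), \qquad f \in \mathcal{S}(\R^n, A^\infty).
\]
A change of variable $t\mapsto \rho_g(t)$ in the Fourier integral defining $\widehat{\beta_g(f)}$, which is legitimate because $|\det \rho_g|=1$ and the phase transforms as $e(-y\cdot \rho_g t)=e(-\rho_g^{-1}y\cdot t)$ by orthogonality, then yields
\[
\widehat{\beta_g(f)}(y) \;=\; \beta_g\bigl(\widehat{f}(\rho_g^{-1}y)\bigr).
\]

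Substituting this into the definition of $\Theta_J$ gives
\[
\Theta_J(\beta_g f)(x) \;=\; \int_{\R^n} \alpha_{Jy}\,\beta_g\bigl(\widehat{f}(\rho_g^{-1}y)\bigr)\,e(x\cdot y)\,dy.
\]
A second change of variable $y\mapsto \rho_g(y)$, the identity $x\cdot \rho_g(y)=\rho_g^{-1}(x)\cdot y$ coming from orthogonality, and the commutation $J\rho_g = \rho_g J$ coming from the $J$-preservation, turn this into
\[
\int_{\R^n} \alpha_{\rho_g(Jy)}\,\beta_g\bigl(\widehat f(y)\bigr)\,e(\rho_g^{-1}(x)\cdot y)\,dy.
\]
Applying $\alpha_{\rho_g(Jy)}\beta_g = \beta_g\alpha_{Jy}$ and pulling $\beta_g$ outside the integral identifies this expression with $\beta_g\bigl(\Theta_J(f)(\rho_g^{-1}x)\bigr) = \beta_g(\Theta_J(f))(x)$, which is the desired equality.

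The main difficulty is purely bookkeeping rather than conceptual: both hypotheses on $\rho$ must be invoked (one to commute $\rho_g$ past the pairing $x\cdot y$, the other to commute $\rho_g$ past $J$), and each change of variable takes place inside an oscillatory integral that is only defined after the Gaussian regularisation $e\bigl(-\tfrac{\epsilon}{2}\|y\|^2\bigr)$ discussed in the remark following Lemma \ref{lem:equivariant-symbol}. The Gaussian cutoff is itself $\rho_g$-invariant by orthogonality, so the substitutions leave it unchanged, and the limit $\epsilon\to 0$ may be taken at the very end.
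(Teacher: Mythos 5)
Your proposal is correct and follows essentially the same route as the paper: unfold $\Theta_J$, use $(\beta_g f)(x)=\beta_g(f(\rho_g^{-1}x))$ on the crossed product, change variables in the (regularised) oscillatory integral, invoke orthogonality for the pairing and orthogonality together with $\rho_g^t J\rho_g = J$ for the $J$-term, and finish with the covariance $\alpha_{\rho_g(Jy)}\beta_g = \beta_g\alpha_{Jy}$. The only cosmetic difference is that you isolate $\widehat{\beta_g f}(y)=\beta_g(\hat f(\rho_g^{-1}y))$ as an intermediate lemma, whereas the paper carries both changes of variable inside one chain of equalities.
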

\begin{proof}

\begin{align*}
\Theta_J(\beta_g(f))(x) &= \int_{\R^n} \alpha_{Jy}(\widehat{\beta_g f}(y))e(x\cdot y)dy				
\\ &= \int_{\R^n} \alpha_{Jy}(\int_{\R^n} \beta_g f(t)e(-y\cdot t)dt))e(x\cdot y)dy
\\
&= \int_{\R^n}\int_{\R^n} \alpha_{Jy} \beta_g (f(g^{-1}t))e(-y\cdot t)e(x\cdot y)dtdy
\\
&= \int_{\R^n}\int_{\R^n} \beta_g \alpha_{g^{-1}Jy}  (f(g^{-1}t))e(-y\cdot t)e(x\cdot y)dtdy
\\
&= \int_{\R^n}\int_{\R^n} \beta_g \alpha_{g^{-1}Jy}  (f(t))e(-y\cdot gt)e(x\cdot y)dtdy
\\
&= \int_{\R^n}\int_{\R^n} \beta_g \alpha_{g^{-1}J(g^t)^{-1}y}  (f(t))e(-y\cdot t)e(g^{-1}x\cdot y)dtdy
\\
&= \int_{\R^n}\int_{\R^n} \beta_g \alpha_{Jy}  (f(t))e(-y\cdot t)e(g^{-1}x\cdot y)dtdy
\\
&= \beta_g(\Theta_J(f))(x),
\end{align*}
where in the above formulas we have used $gx$ to denote $\rho_g(x)$.
\end{proof}

\begin{rem}

	In the above, since $y \in \R_n$,  abusively we have written $gy$ for $(g^t)^{-1}y$  and we have used the fact that $g^{-1}J(g^t)^{-1} = J.$ In general one should be careful with $\R_n$ and $\R^n.$

\end{rem}

\begin{example}\label{ex:NC}

Recall that an $n$-dimensional noncommutative torus $A_{\theta}$ is
the universal C*-algebra  generated by unitaries $U_1$, $U_2$,  $U_3$, $\cdots$, $U_n$
subject to the relations
\[
U_k U_j = \exp (2 \pi i \theta_{jk} ) U_j U_k
\]
for $j, k = 1, 2, 3, \cdots, n$ and $\theta =: (\theta_{jk})$ being a skew symmetric real $n \times n$ matrix. 
If we look at the smooth holomorphically closed subalgebra $A^\infty_{\theta}$ of $A_{\theta}$. 
The algebra $A^\infty_{\theta}$ can also be viewed as the Rieffel strict deformation quantization \cite{Rieffel93} of $C^{\infty}(\mathbb{T}^n )$ by the translation action of $\R^n$ on $C^\infty(\mathbb{T}^n)$ and $\theta$ a skew symmetric form on $\mathbb{R}^n$. 

\end{example}
\begin{example}\label{ex:G}

Let $G$ be  $\Z_2$ or $\Z_3$ or $\Z_4$ or $\Z_6$ as finite cyclic groups (can be viewed as matrices in $SL_2(\Z)$) acting on $\R^2$. Since the action is $\Z^2$ preserving, the $2$-torus ${\mathbb
T}^{2}=\R^{2}/\Z^{2}$ inherits an action of
$G$ from the $G$ action on $\R^{2}$.  Let $A$ be the $C^*$-algebra
of continuous functions on ${\mathbb T}^{2}.$ The
group $\R^{2}$ acts on $\T^{2}$ by translation. For $\theta \in \R$, we consider the symplectic form $ \theta dx_1 \wedge dx_2$, also denoted by $\theta.$ So $A_\theta$ is just like the previous example of a 2 dimensional noncommutative torus. The action $\alpha$
(and $\beta$) of $\R^{2}$ (and $G$) on $A$ satisfy Eq. (\ref{eq:action}). Now the $G$ action on $A_\theta$ is well defined.

Recall that we can also consider the twisted group algebra  $C^*(\Z^2\rtimes G, \omega_{\theta})$, where $\omega_{\theta}$ is a 2-cocycle of $\Z^2$ ($\omega_{\theta}(x,y):= e^{2\pi i \langle \theta x,y\rangle}$, $\theta$ being a real number) extended trivially to the semi-direct product. These algebras are considered in \cite{Ech10}. Now it is not hard to see that  $C^*(\Z^2\rtimes G, \omega_{\theta}) = A_{\theta} \rtimes G,$ where the latter is defined as in the previous paragraph (see \cite[Lemma 2.10]{Ech10}).  In general, with the above 2-cocycles, the twisted group algebras of groups like $\Z^n\rtimes G$ are basically coming from equivariant strict deformation quantization of $\R^n$ action on $A = C(\T^n).$

\end{example}

\begin{corollary}\label{cor:ktheoryoforbifold}
	$\HP_\bullet(A_{\theta}^\infty \rtimes G)$ is independent of $\theta$ parameter. 
\end{corollary}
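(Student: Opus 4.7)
The plan is to reduce both sides of the comparison $\HP_\bullet(A_\theta^\infty\rtimes G)$ versus $\HP_\bullet(A^\infty\rtimes G)$ (where $A=C(\T^n)$) to a common object, using the cyclic equivariant Connes--Thom isomorphism (Corollary \ref{cor:connes-thom-cyclic}) as a bridge. The strategy works in the generality of Theorem \ref{thm:cyclic-quantization}: any two deformation parameters $J$ and $J'$ can be compared via the ``undeformed'' case $J=0$.

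First, I would apply Corollary \ref{cor:connes-thom-cyclic} to the $\R^n$-action $\alpha$ (together with the compatible $G$-action $\beta$) on both the deformed algebra $A_\theta^\infty$ and the undeformed algebra $A^\infty$, yielding
\[
\HP_\bullet(A_\theta^\infty\rtimes G)\cong \HP_\bullet\Big(\big((A_\theta^\infty\otimes\C_n)\rtimes_\alpha\R^n\big)\rtimes G\Big),
\]
\[
\HP_\bullet(A^\infty\rtimes G)\cong \HP_\bullet\Big(\big((A^\infty\otimes\C_n)\rtimes_\alpha\R^n\big)\rtimes G\Big).
\]
The goal is then to match up the two right-hand sides by producing a $G$-equivariant isomorphism of the underlying crossed-product algebras.

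Second, I would invoke Neshveyev's isomorphism $\Theta_J\colon A_\theta^\infty\rtimes\R^n\to A^\infty\rtimes\R^n$ from Proposition \ref{nesh:map}, which is shown in Proposition \ref{chak:equivariant} to be $G$-equivariant. Since $\C_n$ carries the trivial $\R^n$-action, tensoring with $\C_n$ is compatible with forming the $\R^n$-crossed product, giving a $G$-equivariant isomorphism
\[
(A_\theta^\infty\otimes\C_n)\rtimes_\alpha\R^n \;\cong\; (A^\infty\otimes\C_n)\rtimes_\alpha\R^n .
\]
Taking crossed products by $G$ on both sides preserves this isomorphism, and applying $\HP_\bullet$ gives
\[
\HP_\bullet\Big(\big((A_\theta^\infty\otimes\C_n)\rtimes_\alpha\R^n\big)\rtimes G\Big)\cong \HP_\bullet\Big(\big((A^\infty\otimes\C_n)\rtimes_\alpha\R^n\big)\rtimes G\Big).
\]
Chaining the three displayed isomorphisms yields $\HP_\bullet(A_\theta^\infty\rtimes G)\cong \HP_\bullet(A^\infty\rtimes G)$, and the right-hand side is manifestly independent of $\theta$.

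The only point that requires care is the bookkeeping of the $G$-equivariance through the tensor with $\C_n$: one must check that the $G$-action on $\C_n$ induced from $\rho$ commutes with the isomorphism $\Theta_J\otimes\id_{\C_n}$, which is immediate because $\Theta_J$ is $G$-equivariant and the $G$-action on the tensor product is by definition the diagonal one. I do not expect a serious obstacle here; essentially all the analytic work has already been done in Theorem \ref{thm:equ-thom} and in Propositions \ref{nesh:map}--\ref{chak:equivariant}, so this corollary is purely a formal assembly of those results.
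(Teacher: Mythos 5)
Your proof is correct, and it reaches the conclusion by a route that is genuinely different from --- and somewhat leaner than --- the paper's. The paper first transports the problem to the \emph{undecorated} $\R^2$-crossed products using Propositions \ref{nesh:map} and \ref{chak:equivariant}, obtaining $\HP_\bullet\big((A_\theta^\infty\rtimes\R^2)\rtimes G\big)=\HP_\bullet\big((A^\infty\rtimes\R^2)\rtimes G\big)$, and then descends on each side by combining Corollary \ref{cor:connes-thom-cyclic} with a $Spin^c$-lift argument: because $G$ is cyclic and preserves both the metric and the symplectic form, the homomorphism $G\to SO(2)$ lifts to $Spin^c(2)$, so the $G$-action on $\C_2$ is inner (of the form $\Ad\pi$ for a unitary representation $\pi$ of $G$), and inserting or removing the Clifford factor inside the $G$-crossed product is therefore a matrix stabilisation invisible to $\HP_\bullet$. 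Your argument avoids the lift entirely: you apply Corollary \ref{cor:connes-thom-cyclic} to each side \emph{first}, and then match the $\C_n$-decorated $\R^n$-crossed products directly through the $G$-equivariant isomorphism $\Theta_\theta\otimes\id_{\C_n}$, so the Clifford factor never needs to be removed. What this buys is that the $Spin^c$-lift hypothesis simply disappears (it is automatic here because $G$ is cyclic, but it is still an extra fact invoked in the paper's proof), and your argument also carries over verbatim to Corollary \ref{cor:cyclic-quantization} without the decomposition $\R^n=V\oplus W$ used there to reduce to nondegenerate $J$: Proposition \ref{nesh:map} requires no nondegeneracy, and Corollary \ref{cor:connes-thom-cyclic} makes no reference to $J$ at all.
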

\begin{proof}

Here we have $A = C(\mathbb{T}^2)$ and $J=\theta.$ 
	From the above discussion and Proposition \ref{chak:equivariant} we get,
	
	$$(A_\theta^\infty \rtimes \R^2 )\rtimes G \simeq (A^\infty \rtimes \R^2) \rtimes G.$$
	
	Now applying the $\HP$ functor on the both sides we get,
	
	\begin{equation}
	\HP_\bullet\Big((A_\theta^\infty \rtimes \R^2 )\rtimes G\Big) = \HP_\bullet\Big(( A^\infty \rtimes \R^2) \rtimes G\Big).
	\end{equation}
	
	Now since in this particular case $G$ is a finite cyclic group, the $G$ action on $\R^2$ is $spin^c$ preserving. Indeed, the diagram 
	
	\begin{center}

\begin{tikzcd}
                &                                 &                                   & G                 \arrow{d}          &   \\
    0 \arrow{r} & \mathbb{S}^1 \arrow{r}   & Spin^c(n)                 \arrow{r}        & SO(n)           \arrow{r} & 0 \\
    
\end{tikzcd}
\end{center}

\noindent determines a group 2-cocycle on $SO(n)$. And since the restriction of this cocycle to $G$ is trivial (as $G$ is cyclic and preserving the symplectic structure $J$), the lift

	\begin{center}
		\begin{tikzcd}
                &                                 &                                   & G  \arrow[ld, dashrightarrow]               \arrow{d}          &   \\
    0 \arrow{r} & \mathbb{S}^1 \arrow{r}   & Spin^c(n)                 \arrow{r}        & SO(n)           \arrow{r} & 0 ,\\
    \end{tikzcd}
    \end{center}
   \noindent is always possible. Hence 
\begin{align*}
\HP_\bullet\Big((A_{\theta}^\infty \rtimes \R^2) \rtimes G\Big) 
&= \HP_\bullet \Big(\big( (A_{\theta}^\infty \otimes \C_2) \rtimes \R^2\big)\rtimes G\Big)
\\ &= \HP_\bullet(A_{\theta}^\infty\rtimes G)\ (\text{by Corollary \ref{cor:connes-thom-cyclic}}).
\end{align*}
Similar computation gives $\HP_\bullet( (A \rtimes \R^2) \rtimes G) = \HP_\bullet( A \rtimes G)$. So the claim follows from the isomorphism given by Equation (\ref{eq:cyclic}).

	\end{proof}

Now, from \cite[Theorem 0.1]{Ech10}, we know that  $\K_0(A_\theta\rtimes \Z_2) = \Z^6$, $\K_0(A_\theta\rtimes \Z_3) = \Z^8$, $\K_0(A_\theta\rtimes \Z_4) = \Z^9$, $\K_0(A_\theta\rtimes \Z_6) = \Z^{10}$ and $\K_1 = 0$ for all the cases. It is also well known that $A_\theta^\infty \rtimes G$ is holomorphically closed inside $A_\theta \rtimes G,$ for such G (see \cite[Proposition 6.6]{chak19}). Since the Chern character from $\K_*(C^\infty(\mathbb{T}^2)\rtimes G) \otimes \C$ to $\HP_*(C^\infty(\mathbb{T}^2)\rtimes G)$ is an isomorphism (by a result of Baum and Connes, see \cite[Page 279-80, equation 11 and 13]{Soll05}), we conclude that $\HP_0(A_\theta^\infty\rtimes \Z_2) = \C^6$, $\HP_0(A_\theta^\infty\rtimes \Z_3) = \C^8$, $\HP_0(A_\theta^\infty\rtimes \Z_4) = \C^9$, $\HP_0(A_\theta^\infty\rtimes \Z_6) = \C^{10}$ and $\HP_1 = 0$ for all these algebras.

\begin{corollary}\label{cor:cyclic-quantization}
	For a general deformation $A_{J}^\infty$ defined by Equation (\ref{eq:star}), $\HP_\bullet(A_{J}^{\infty}\rtimes G)$ is independent of the $J$ parameter. 
\end{corollary}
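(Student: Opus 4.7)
The plan is to reduce the statement to the equivariant Connes--Thom isomorphism in cyclic homology (Corollary \ref{cor:connes-thom-cyclic}) by showing that, after stabilizing by the Clifford algebra $\C_n$ and crossing with $\R^n$, the deformation parameter $J$ disappears thanks to Neshveyev's isomorphism (Proposition \ref{nesh:map}). This essentially generalizes the proof of Corollary \ref{cor:ktheoryoforbifold} from $A=C(\mathbb{T}^2)$ to an arbitrary $A$, avoiding the ad hoc $Spin^c$ discussion used there.

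First, I would observe that because the $\R^n$-action on $\C_n$ is trivial, the Rieffel deformation commutes with tensoring by $\C_n$; more precisely, if we set $B=A\otimes \C_n$ with the $\R^n$-action $\alpha\otimes \id$ and the compatible $G$-action $\beta\otimes \rho$, then the Rieffel deformation $B_J^\infty$ is canonically identified with $A_J^\infty\otimes \C_n$, and this identification is both $\R^n$- and $G$-equivariant. Next, applying Proposition \ref{nesh:map} to $B$ yields an isomorphism
\[
\Theta_J\colon (A_J^\infty\otimes \C_n)\rtimes_\alpha \R^n \;\xrightarrow{\;\cong\;}\; (A^\infty\otimes \C_n)\rtimes_\alpha \R^n,
\]
and by Proposition \ref{chak:equivariant} this is $G$-equivariant. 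Crossing with $G$ gives an isomorphism of locally convex algebras
\[
\bigl((A_J^\infty\otimes \C_n)\rtimes_\alpha \R^n\bigr)\rtimes G \;\cong\; \bigl((A^\infty\otimes \C_n)\rtimes_\alpha \R^n\bigr)\rtimes G.
\]

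Applying the functor $\HP_\bullet$ to both sides, and then invoking Corollary \ref{cor:connes-thom-cyclic} for the action of $\R^n$ on $A_J$ and on $A$ respectively, I obtain
\[
\HP_\bullet(A_J^\infty\rtimes_\beta G) \;\cong\; \HP_\bullet\bigl(((A_J^\infty\otimes\C_n)\rtimes_\alpha \R^n)\rtimes G\bigr) \;\cong\; \HP_\bullet\bigl(((A^\infty\otimes\C_n)\rtimes_\alpha \R^n)\rtimes G\bigr) \;\cong\; \HP_\bullet(A^\infty\rtimes_\beta G),
\]
which is manifestly independent of $J$.

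The only subtle step is verifying that Neshveyev's map $\Theta_J$ continues to make sense and remains $G$-equivariant after the trivial $\R^n$-action is extended to the $\C_n$-factor; but since $\C_n$ is finite-dimensional and the $\R^n$-action on it is trivial, the oscillatory integral defining $\Theta_J$ clearly acts as the identity on the $\C_n$-component, and the $G$-equivariance is a direct consequence of Proposition \ref{chak:equivariant} combined with the compatibility of the $G$-action on $\C_n$ (induced by $\rho$) with $\alpha\otimes \id$. Once this is in place, the chain of isomorphisms is routine, and the main nontrivial ingredient is simply the equivariant Connes--Thom isomorphism already established as Corollary \ref{cor:connes-thom-cyclic}.
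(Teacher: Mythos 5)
Your proof is correct and actually takes a cleaner route than the paper. The paper's own argument first reduces to the case that $J$ is nondegenerate, applies Neshveyev's isomorphism (Proposition \ref{nesh:map}) to $A$ itself to get $(A_J^\infty\rtimes \R^n)\rtimes G\cong (A^\infty\rtimes\R^n)\rtimes G$, and then invokes a $Spin^c$-lifting argument (as in Corollary \ref{cor:ktheoryoforbifold}) to justify inserting and removing the Clifford factor $\C_n$ before applying the equivariant Connes--Thom isomorphism; the degenerate case is handled by splitting $\R^n$ into a $G$-invariant sum $V\oplus W$ with $J|_V=0$ and $J|_W$ nondegenerate. You instead apply Neshveyev's isomorphism directly to $B=A\otimes\C_n$, which is legitimate because the $\R^n$-action on $\C_n$ is trivial so that $B_J^\infty = A_J^\infty\otimes\C_n$ and $\Theta_J$ acts as the identity on the Clifford factor, and the $G$-equivariance from Proposition \ref{chak:equivariant} persists since $(B,\alpha\otimes\id,\beta\otimes\rho)$ still satisfies (\ref{eq:action}). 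This produces exactly the algebras appearing in Corollary \ref{cor:connes-thom-cyclic}, so the chain $\HP_\bullet(A_J^\infty\rtimes G)\cong\HP_\bullet(((A_J^\infty\otimes\C_n)\rtimes\R^n)\rtimes G)\cong\HP_\bullet(((A^\infty\otimes\C_n)\rtimes\R^n)\rtimes G)\cong\HP_\bullet(A^\infty\rtimes G)$ closes immediately, with no $Spin^c$ discussion and no case split on the rank of $J$. Your route is more economical and uniform; what the paper's route arguably buys is a conceptual link between the hypothesis $\rho(G)\subseteq SL_n(\R,J)$ and the existence of a $Spin^c$-lift, but that hypothesis is already needed just for $\beta$ to act on $A_J$ at all, so nothing essential is lost by your simplification.
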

\begin{proof}
	We first prove the case that $J$ is nondegenerate. From Proposition \ref{nesh:map} we get,
	$$A^\infty_J \rtimes \R^n  \simeq A^\infty \rtimes \R^n .$$ 
	Again, the $G$ action on $\mathbb{R}^n$ preserves the matrix $J$ and therefore preserves a $spin^c$ structure. 
	Now applying the functor $HP_\bullet$ on the both sides we get the desired conclusion for $A^\infty_J$ with a skew symmetric $n\times n$ matrix $J$ by  a similar computation as in  the proof of Corollary \ref{cor:ktheoryoforbifold}. 
	
	For a general $J$, we decompose $\mathbb{R}^n$ into a direct sum of $V\oplus W$ for two $G$-invariant subspaces of $\mathbb{R}^n$ where the restriction of $J$ on $V$ is zero, and the restriction of $J$ on $W$ is nondegenerate. One directly checks that the algebra $A^\infty_J$ is isomorphic to $A^\infty_{J|_W}$ with the $W$-action on $A$ and $J|_W$. We can then deduce the desired result for general $A^\infty_J=A^\infty_{J|_W}$ from the above nondegenerate case.   
	\end{proof}

\appendix

\section{An equivariant version of Grensing's results}\label{sec:equi_Grensing}

Let $G$ be a finite group. We develop an equivariant version of Grensing's results \cite{Grensing12} and prove Proposition \ref{prop:G-index}.

The following result is a straight forward generalisation of Grensing's work \cite[Lemma 34]{Grensing12}. Since $G$ is a finite group, $\C G$ is isomorphic to $\oplus_{i=1}^d M_{n_i}(\C)$ with a trivial grading, where $d$ is the number of conjugacy classes of $G$, and for $i=1,...,d$, $\C^{n_i}$ is an irreducible $G$-representation. 
\begin{lemma}\label{lemma:induction_equivariant} Let $(\alpha,\bar\alpha):\C G\rightarrow\hat {\mathcal{B}}\trianglerighteq \mathcal{B}\rtimes G$ be a quasihomomorphism. Then there is a quasihomomorphism $(\alpha',\bar\alpha'):\C G \rightarrow M_{2d}({(\mathcal{B}\rtimes G)}^+)\trianglerighteq M_{2d}(\mathcal{B}\rtimes G)$ such that
$$\HH(\theta_\mathcal{B})\circ \HH(\alpha,\bar\alpha)\circ \HH(\kappa)^{-1} =\HH(\alpha',\bar\alpha')$$
for every split exact $\mathcal{K}^\infty$-stable functor, where $\theta_{\mathcal{B}}:\mathcal{B}\to M_{2d}(\mathcal{B})$ denotes the $G$-stabilisation, and $\kappa$ is the stabilisation map from $\C^d$ to $\oplus_{i=1}^d M_{n_i}(\C).$
\end{lemma}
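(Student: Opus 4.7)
The plan is to reduce this equivariant statement to Grensing's original non-equivariant \cite[Lemma 34]{Grensing12}, using the Wedderburn decomposition $\C G \cong \bigoplus_{i=1}^d M_{n_i}(\C)$ to cut the quasihomomorphism $(\alpha, \bar\alpha)$ into $d$ pieces, one for each irreducible representation of $G$. Note that throughout the argument the $G$-structure has been entirely absorbed into the target algebra $\mathcal{B} \rtimes G$, so the source is now an ordinary (finite-dimensional, non-equivariant) locally convex algebra; this is what makes Grensing's non-equivariant machinery applicable.

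First I would let $e_i \in M_{n_i}(\C) \subset \C G$ denote a rank-one projection in the $i$-th Wedderburn block and form the corner algebras $e_i \hat{\mathcal{B}} e_i \trianglerighteq e_i (\mathcal{B}\rtimes G) e_i$. Restricting $(\alpha,\bar\alpha)$ along the inclusion $\C \hookrightarrow M_{n_i}(\C)$, $z \mapsto z e_i$, yields a quasihomomorphism $(\alpha_i,\bar\alpha_i): \C \to \hat{\mathcal{B}} \trianglerighteq \mathcal{B}\rtimes G$ for each $i$. By $\mathcal{K}^\infty$-stability of $\HH$ and the fact that $M_{n_i}(\C)$ is a finite-dimensional matrix algebra (hence a corner of $\mathcal{K}^\infty$), the composition $\HH(\alpha,\bar\alpha) \circ \HH(\kappa)^{-1}$ decomposes as the direct sum of the $\HH(\alpha_i,\bar\alpha_i)$, via the standard Morita-style argument using Proposition \ref{quasirules}\eqref{quasipostcompose} and \eqref{quasiprecompose}. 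Concretely, applying Proposition \ref{morphofdoublesplit} to the morphism of double split extensions induced by the corner embeddings identifies $\HH(\alpha,\bar\alpha) \circ \HH(\kappa)^{-1}$ with $\bigoplus_{i=1}^d \HH(\alpha_i,\bar\alpha_i)$.

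Next, for each $i$, apply Grensing's non-equivariant \cite[Lemma 34]{Grensing12} to the quasihomomorphism $(\alpha_i,\bar\alpha_i): \C \to \hat{\mathcal{B}} \trianglerighteq \mathcal{B}\rtimes G$; this yields a quasihomomorphism $(\alpha_i',\bar\alpha_i'): \C \to M_2((\mathcal{B}\rtimes G)^+) \trianglerighteq M_2(\mathcal{B}\rtimes G)$ satisfying $\HH(\theta_i) \circ \HH(\alpha_i,\bar\alpha_i) = \HH(\alpha_i',\bar\alpha_i')$ where $\theta_i\colon \mathcal{B}\rtimes G \to M_2(\mathcal{B}\rtimes G)$ is the corner embedding. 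Assembling these $d$ quasihomomorphisms into a block-diagonal quasihomomorphism
\[
(\alpha',\bar\alpha')\colon \C G = \bigoplus_{i=1}^d M_{n_i}(\C) \longrightarrow M_{2d}\bigl((\mathcal{B}\rtimes G)^+\bigr) \trianglerighteq M_{2d}(\mathcal{B}\rtimes G),
\]
where on each $M_{n_i}(\C)$ factor we first compress via $e_i$ and then apply $(\alpha_i',\bar\alpha_i')$ into the $i$-th $2\times 2$ block, gives a well-defined quasihomomorphism because the matrix blocks $M_{n_i}(\C)$ are orthogonal.

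Finally, one checks the stated identity $\HH(\theta_{\mathcal{B}}) \circ \HH(\alpha,\bar\alpha) \circ \HH(\kappa)^{-1} = \HH(\alpha',\bar\alpha')$ by chasing diagrams: the block decomposition identifies $\HH(\theta_{\mathcal{B}})$ with $\bigoplus_i \HH(\theta_i)$ (up to the canonical isomorphism $M_{2d}(\mathcal{B}\rtimes G) \cong \bigoplus_i M_2(\mathcal{B}\rtimes G)$ coming from $\mathcal{K}^\infty$-stability), and under this identification both sides restrict on the $i$-th factor to $\HH(\theta_i)\circ \HH(\alpha_i,\bar\alpha_i) = \HH(\alpha_i',\bar\alpha_i')$. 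The main technical obstacle I anticipate is the careful bookkeeping in the first step: verifying that the corner-and-sum reduction genuinely realises $\HH(\alpha,\bar\alpha)\circ\HH(\kappa)^{-1}$ rather than an off-by-$n_i$ variant, which requires invoking $\mathcal{K}^\infty$-stability once per factor in a consistent way, exactly as in Grensing's matrix-algebra manipulations.
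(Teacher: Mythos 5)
Your proposal is correct and follows essentially the same route as the paper's own proof: decompose $\C G$ via Wedderburn into the blocks $M_{n_i}(\C)$, precompose $(\alpha,\bar\alpha)$ with the rank-one corner inclusion $\C\hookrightarrow M_{n_i}(\C)$ to get $d$ quasihomomorphisms from $\C$, apply Grensing's non-equivariant \cite[Lemma 34]{Grensing12} to each, and then take the direct sum. The paper's proof is more terse and cites \cite[Part (i) of Corollary 56]{Grensing12} where you invoke Propositions \ref{quasirules} and \ref{morphofdoublesplit} for the same decomposition step, but these are just different choices of prior results for the identical argument.
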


\begin{proof}

For each summand of $\C G$, $M_{n_i}(\C),$ consider the quasihomomorphism  $\kappa$ from $\C$ to $M_{n_i}(\C)$ given by stabilisation. Now the composition of  $\kappa$ and  $(\alpha,\bar\alpha)$, is given by a quasihomomorphism (using \cite[Part (i) of Corollary 56]{Grensing12}) from $\C$ to $\mathcal{B}\rtimes G$. So by \cite[Lemma 34]{Grensing12}, this quasihomomorphism gives rise to a quasihomomorphism $(\alpha',\bar\alpha') : \C  \rightarrow M_{2}({(\mathcal{B}\rtimes G)}^+)\trianglerighteq M_{2}(\mathcal{B}\rtimes G)$ such that  
$$\HH(\theta)\circ \HH(\alpha,\bar\alpha)\circ \HH(\kappa)^{-1} =\HH(\alpha',\bar\alpha'),$$ where $\theta$ is the $M_2$ stabilisation. Now the result follows from taking the direct sum of each summand.

\end{proof}

As an immediate corollary we get the following generalisation of \cite[Part (i) of Corollary 56]{Grensing12}.

\begin{corollary}\label{sec:product_Grensing} Every two quasihomomorphisms $(\alpha,\bar\alpha):\C G \rightarrow \hat {\mathcal{B}} \trianglerighteq \mathcal{B}\rtimes G$ and $(\beta,\bar\beta):\mathcal{B}\rtimes G\rightarrow\hat C \trianglerighteq C$ 
have (up to $M_{2d}$-stabilisation) a product with respect to split exact $\mathcal{K}^\infty$-stable functors.
\end{corollary}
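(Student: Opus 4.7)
The plan is to mirror Grensing's non-equivariant product construction \cite[Corollary 56]{Grensing12}, replacing \cite[Lemma 34]{Grensing12} by its equivariant version, Lemma \ref{lemma:induction_equivariant}. The key observation is that after $M_{2d}$-stabilisation the first quasihomomorphism can be arranged to land in matrices over the unitisation of $\mathcal{B}\rtimes G$; once this is done, the obstruction to composing with $(\beta,\bar\beta)$ disappears, because the homomorphisms $\beta,\bar\beta\colon \mathcal{B}\rtimes G\to\hat C$ extend canonically, and stably functorially, to $M_{2d}\bigl((\mathcal{B}\rtimes G)^+\bigr)\to M_{2d}(\hat C)^+$.

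More concretely, I would first invoke Lemma \ref{lemma:induction_equivariant} to replace $(\alpha,\bar\alpha)$ by a quasihomomorphism
\[
(\alpha',\bar\alpha')\colon \C G \to M_{2d}\bigl((\mathcal{B}\rtimes G)^+\bigr)\,\trianglerighteq\, M_{2d}(\mathcal{B}\rtimes G)
\]
satisfying $\HH(\alpha',\bar\alpha')=\HH(\theta_\mathcal{B})\circ\HH(\alpha,\bar\alpha)\circ\HH(\kappa)^{-1}$. Next I would apply the matrix functor $M_{2d}$ to $(\beta,\bar\beta)$ and extend the resulting pair unitally to a quasihomomorphism
\[
\bigl(M_{2d}(\beta^+),\,M_{2d}(\bar\beta^+)\bigr)\colon M_{2d}\bigl((\mathcal{B}\rtimes G)^+\bigr) \to M_{2d}(\hat C)^+\,\trianglerighteq\, M_{2d}(C),
\]
using that $\beta$ and $\bar\beta$ agree on units, so their difference is $C$-valued and entrywise $M_{2d}$-extension preserves the ideal. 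The product is then the composition
\[
\bigl(M_{2d}(\beta^+)\circ \alpha',\,M_{2d}(\bar\beta^+)\circ\bar\alpha'\bigr)\colon \C G \to M_{2d}(\hat C)^+\,\trianglerighteq\, M_{2d}(C),
\]
which is visibly a quasihomomorphism because $\alpha'(x)-\bar\alpha'(x)\in M_{2d}(\mathcal{B}\rtimes G)$ and the inner-ideal continuity conditions transport through $M_{2d}$ (for the projective tensor product). Applying the functor $\HH$ and using parts (\ref{quasiprecompose}) and (\ref{quasipostcompose}) of Proposition \ref{quasirules}, one then checks that this class realises $\HH(\beta,\bar\beta)\circ\HH(\alpha,\bar\alpha)$ modulo the $\mathcal{K}^\infty$-stability isomorphism $\HH(\theta_\mathcal{B})$, which is precisely what ``product up to $M_{2d}$-stabilisation'' means.

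The main obstacle is purely bookkeeping: carrying along the unitisations $(\cdot)^+$, identifying $M_{2d}(\hat C^+)$ with $M_{2d}(\hat C)^+$ up to the canonical inclusion, and verifying that the resulting composition formula on $\HH$ depends only on the original quasihomomorphism classes and not on the particular representative $(\alpha',\bar\alpha')$ furnished by Lemma \ref{lemma:induction_equivariant}. There is no new analytic content beyond Lemma \ref{lemma:induction_equivariant}; the role of the finite group $G$ is only to dictate the size $2d$ of the stabilisation, since $\C G\cong \bigoplus_{i=1}^d M_{n_i}(\C)$ has $d$ simple summands and these are absorbed simultaneously by a single $M_{2d}$-stabilisation in Lemma \ref{lemma:induction_equivariant}.
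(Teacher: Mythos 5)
Your proposal follows the same route as the paper's (very terse) proof: invoke the equivariant standard-form Lemma~\ref{lemma:induction_equivariant} to land the first quasihomomorphism in $M_{2d}\bigl((\mathcal{B}\rtimes G)^+\bigr)$, then run the argument of Grensing's Corollary~56(i) to form the product, using $\mathcal{K}^\infty$-stability in place of Schatten-ideal stability. You have fleshed out the bookkeeping the paper leaves implicit by citing Grensing, and your observations about the role of $d$ and the unitisations match what is needed; the only caveat is that the step asserting the composite is ``visibly a quasihomomorphism'' in fact relies on the specific normal form produced by Lemma~34/Lemma~\ref{lemma:induction_equivariant} (that $\bar\alpha'$ be scalar), which is exactly the content of Grensing's Corollary~56(i) that you are explicitly mirroring, so no new gap is introduced relative to the paper.
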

\begin{proof} $\mathcal{K}^\infty$-stability implies $M_{2d}$-stability. Now the same arguments of \cite[Part (i), Corollary 56]{Grensing12} and  Lemma \ref{lemma:induction_equivariant} give the result.\end{proof}

Now we define an $R(G)$ module structure of $\HH(\mathcal{A}\rtimes G)$ following the construction in \cite[Section 2.7, Definition 2.7.8] {MR911880} for a  split-exact, $\mathcal{K}^\infty$-stable functor $\HH$ on the category of (graded) locally convex algebras. For any element $\rho$ of $R(G),$ we define a morphism $[\rho]$ from $\HH(\mathcal{A}\rtimes G)$ to itself in the following way. 

Suppose that $\rho$ is a $G$ representation on a finite dimensional vector space $V$ of dimension $l$, and so it induces an action of $G$ on $L(V).$  Firstly, we have an algebra homomorphism $\varphi: \mathcal{A}\rtimes G\to \big(L(V)\rtimes \mathcal{A})\big)\rtimes G$ mapping $a\otimes f$ to $(id\otimes a)\otimes f$. Secondly,  the  crossed product $(L(V)\otimes \mathcal{A})\rtimes G$ is isomorphic to $(M_l(\C)\otimes \mathcal{A})\rtimes G$, where the $G$ action on $M_l(\C)$ is trivial.  The isomorphism $F$ from $(M_l(\C)\otimes \mathcal{A})\rtimes G$  to $(L(V)\otimes \mathcal{A})\rtimes G$ is given by $F(M\otimes f)(x)= M\rho_{x^{-1}}\otimes f(x).$  Thirdly, we observe that $(M_l(\C)\otimes \mathcal{A})\rtimes G$  is  isomorphic to $M_l(\mathcal{A}\rtimes G)$ via an isomorphism $\psi$. In summary, we have obtained a homomorphism $\Phi_V: \mathcal{A}\rtimes G\to M_l(\mathcal{A}\rtimes G)$. 

Let $\C$ be the trivial $G$ representation. We apply the above construction of $\Phi_V$ to the $G$ representation on the direct sum $V\oplus \mathbb{C}$. We obtain a homomorphism $\Phi_{V\oplus \mathbb{C}}$ from $\mathcal{A}\rtimes G$ to $M_{l+1}(\mathcal{A}\rtimes G)$. Let $p_V$ and $p_\C$ be the projections from $V\oplus \C$ onto $V$ and $\C$. We consider the homomorphisms $\varphi_V, \varphi_\C: \mathcal{A} \rtimes G\to (L(V\oplus \C)\otimes \mathcal{A})\rtimes G$ defined by
\[
\varphi_V(a\otimes f)=(p_V\otimes a)\otimes f,\ \varphi_\C(a\otimes f)=(p_\C\otimes a)\otimes f.
\]
By the $l+1$-stability, we have that $\HH(\Phi_V\circ \varphi_\C): \HH(\mathcal{A}\rtimes G)\to \HH\big(M_{l+1}(\mathcal{A}\rtimes G)\big)$ is an isomorphism. We define $[\rho]: \HH(\mathcal{A}\rtimes G)\to \HH(\mathcal{A}\rtimes G)$ to be 
\[
\HH(\Phi_V\circ \varphi_\C)^{-1}\circ \HH(\Phi_V\circ \varphi_V): \HH(\mathcal{A}\rtimes G)\to \HH(\mathcal{A}\rtimes G).
\]
It is straightforward to check that the above definition defines an $R(G)$ module structure on $\HH(\mathcal{A}\rtimes G)$, which is left to the reader. 
 
Now we have the following generalisation of \cite[Proposition 45]{Grensing12}.
 
 \begin{proposition}\label{prop:index} (Proposition \ref{prop:G-index})
 	Let $x=(\mathbb{B}(\mathcal{H}\rtimes G), \phi, F)$ be a locally convex Kasparov $(\C G , \mathcal{K}^\infty\rtimes G )$ module defined above from a $G$-equivariant Kasparov module $(\mathcal{H}, \varphi, \tilde{F})$ for $C^*$-algebras $(\C, \mathcal{K})$, and $\HH$ be a diffotopy invariant, split exact, $\mathcal{K}^\infty$ stable functor. Then  $\HH(Qh(x)) = \operatorname{index}_G \tilde{F} \circ \theta_*$, where $\theta_* : \HH(\C G) \rightarrow \HH(\mathcal{K}^\infty\rtimes G)$ denotes the $G$-stabilisation map, and $\operatorname{index}_G \tilde{F}$ is the $G$-index of the operator $\tilde{F}$ which is an element of $R(G).$
 \end{proposition}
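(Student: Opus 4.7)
The plan is to adapt Grensing's original proof of his non-equivariant Proposition 45 \cite{Grensing12}, using the $R(G)$-module structure on $\HH(\mathcal{K}^\infty\rtimes G)$ constructed earlier in this appendix and the stabilisation-with-product statement of Lemma \ref{lemma:induction_equivariant} and Corollary \ref{sec:product_Grensing}. The key structural observation is that the index of a $G$-equivariant Kasparov module $(\mathcal{H},\varphi,\tilde F)$ for $(\C,\mathcal{K})$ can, after a $G$-equivariant compact perturbation, be localised to a finite-dimensional $\Z/2$-graded $G$-representation $\ker\tilde F_+\ominus\ker\tilde F_-\in R(G)$; the equivariance of all ingredients is built into the definition so such averaging is available.

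First I would $G$-equivariantly modify $\tilde F$ so that $\tilde F=\tilde F^\ast$, $\tilde F$ commutes with $\varphi$, and $\tilde F^2=1$ on the orthogonal complement of a finite-dimensional $G$-invariant graded subspace $V=V_+\oplus V_-=\ker\tilde F_+\oplus\ker\tilde F_-$. This perturbation is a $G$-equivariant compact operator, so passing to the induced Kasparov $(\C G,\mathcal{K}^\infty\rtimes G)$-module preserves the class of $Qh(x)$ in any diffotopy-invariant, split-exact, $\mathcal{K}^\infty$-stable $\HH$. This reduces the problem to analysing a locally convex Kasparov module whose operator $F$ (obtained as above from $\tilde F$ by the descent construction) squares to $1$ off $V\rtimes G\subset\mathcal{H}\rtimes G$.

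Second, I would decompose $(\mathbb{B}(\mathcal{H}\rtimes G),\phi,F)$ as the direct sum of a \emph{degenerate} part on $(V\rtimes G)^\perp$ (where $F^2=1$, so $P_\epsilon\phi$ and $P_\epsilon^\perp\phi$ are intertwined by the unitary $F$) and a finite-dimensional part on $V\rtimes G$. On the degenerate part, a standard rotation/diffotopy between $F$ and $\epsilon F$ together with Proposition \ref{quasirules}(\ref{negativequasi}) and (\ref{orthosum}) shows that the resulting quasihomomorphism vanishes under $\HH$; this is where diffotopy invariance enters. On the finite-dimensional part, the quasihomomorphism becomes the formal difference $(\varphi|_{V_+}, \varphi|_{V_-})$, i.e.\ the pair of $G$-representations $V_\pm$, composed with the inclusion of finite-rank operators into $\mathcal{K}^\infty\rtimes G$, which by the construction of the $R(G)$-module structure is precisely the action of $[V_+]-[V_-]=\operatorname{index}_G\tilde F$ on $\theta_\ast[1]\in\HH(\mathcal{K}^\infty\rtimes G)$.

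The main obstacle is checking that the perturbations and orthogonal splittings used in Grensing's non-equivariant argument survive the descent construction: the unitary implementing $F^2=1$ off $V$, the isotopy to $\epsilon F$, and the isomorphisms identifying $M_l(\mathcal{A}\rtimes G)$ with $(L(V)\otimes\mathcal{A})\rtimes G$ must all be performed $G$-equivariantly. Everything is formally available because $G$ is finite (allowing averaging, and $\C G\cong\bigoplus M_{n_i}(\C)$), but care is required to ensure the locally convex Kasparov module axioms continue to hold at each stage; this is exactly the bookkeeping made available by Lemma \ref{lemma:induction_equivariant} and Corollary \ref{sec:product_Grensing}. Once this is verified, combining the two pieces of the decomposition yields $\HH(Qh(x))=\operatorname{index}_G\tilde F\circ\theta_\ast$ as claimed.
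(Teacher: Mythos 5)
Your proposal follows essentially the same route as the paper: both adapt Grensing's Proposition 45 by isolating the index contribution in a finite-dimensional $G$-invariant graded subspace (the kernel/cokernel of $T$ in $\tilde F = \begin{pmatrix}0&S\\T&0\end{pmatrix}$), observing that the remaining degenerate piece dies under $\HH$, and then identifying the finite part with the action of $[V_+]-[V_-]=\operatorname{index}_G\tilde F$ via the $R(G)$-module structure set up earlier in the appendix. The paper's proof is terser — it keeps Grensing's $1-TT'$ bookkeeping, decomposes into irreducibles $\chi$, and reduces to the trivial-representation case by applying $[\chi]$, using the existence of a $G$-invariant rank-one projection in $\mathcal{K}^\infty$ (a point worth making explicit in your write-up, since it underlies the correctness of $\theta_*$ and the $R(G)$-action) — but the structure of the argument is the same as yours.
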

 
 \begin{proof}
 	 
 	 The proof is essentially same to the proof of \cite[Proposition 45]{Grensing12}.
 Using Grensing's notation, we assume 	 
 	 $\tilde{F}=\left(\begin{array}{cc}0&S\\ T&0\end{array}\right)$. By  the hypothesis, $T$ is Fredholm and hence has closed co-kernel. The co-kernel is a $G$ space, and further assume that $1-TT'$ (using Grensing's notation) be a map from $\C^l \rtimes G$ to $(\C^l\otimes \mathcal{K}^\infty)\rtimes G $ (as a map of Hilbert modules), where $\C^l$ carries the $G$ action via an irreducible representation $\chi$ of $G$. If the action were  trivial we would get $\HH(1-TT')=-l\theta_*$ (using Grensing's non-equivariant version). We observe that since $G$ is finite, there always exists a $G$ invariant rank one projection in $\mathcal{K}^\infty.$ Then we can reduce the proof to the trivial case by applying the element $[\chi]$ on $\HH(\mathcal{K}^\infty \rtimes G).$    

\end{proof}


\section{Connes' pseudo-differential calculus}\label{sec:calculus}

We review briefly in this appendix key results in Connes' psuedodifferential calculus for $\mathbb{R}^n$ actions, \cite{connes80}.  We assume that the reader is familiar with the definition of classical pseudo-differential calculus of $\R^n$ i.e H\"ormander classes of symbols, and refer to \cite{Hoer65} for a thorough discussion of the classical theory. Connes in \cite{connes80} introduced an anisotropic version of H\"ormander classes of symbols, which was studied later by Baaj in \cites{baaj1,baaj2} in detail. 

Suppose that  $\mathbb{R}^n$, an abelian Lie group, acts (with the action denoted by $\alpha$) strongly continuously on a unital (adjoining a unit if necessary) C*-algebra $A$ (possibly graded). Let $A^{\infty}$ be the locally convex algebra of $A$ of smooth vectors for the action with a system of seminorms $(p_i)$. 
For the $C^*$-dynamical system $(A, \R^n, \alpha)$, let $x\mapsto
V_x$ be the canonical representation of $\R^n$ in
$M(A\rtimes_\alpha \R^n)$, the multiplier algebra, with
$V_{x}aV_{x}^*=\alpha_{x}(a)$ ($a\in A$).
 Let $\mathbb{R}_n$ be the Fourier dual of $\mathbb{R}^n$ as before.
 We shall say that $\rho$, a $C^{\infty}$ map from $\mathbb{R}_n$ to $A^{\infty}$, is a symbol of order $m$, $\rho \in S^m(\R_n,A^\infty)$ if the following properties hold:
\begin{enumerate}
\item for all multi-indices $i,j$, there exists $C_{ij} < 
\infty$ such that
$$
p_i \left(\left( \frac{\partial}{\partial \, \xi} \right)^j \, \rho 
(\xi) \right) \leq C_{ij} (1 + \vert \xi \vert)^{m-\vert j \vert} \, ;
$$
\item there exists $s \in C^{\infty} (\mathbb{R}_n \setminus \{ 0 \} , 
A^{\infty})$ such that when $\lambda \rightarrow + \infty$ one has 
$$\lambda^{-m} \, \rho (\lambda 
\, \xi) \rightarrow s (\xi)$$ 
$[$ for the topology of $C^{\infty} (\mathbb{R}_n \setminus \{ 0 
\} , A^{\infty})]$.
\end{enumerate}

When $A$ is $C_0(\R^n)$ and $\R^n$ is acting on $A$ by the translation action, we may think of $\rho$ as a two variable function. In this case we get back to the classical symbols (\cite[Lemma 2.7]{Lein10}).

Connes proved that an order zero symbol gives rise to an element of the multiplier algebra of the crossed product $A\rtimes_\alpha \R^n$. Indeed, if $\rho$ is a symbol of order zero then we can take the Fourier transform (in the sense of distribution): 
\[
\widehat{\rho}(x)=\int_{{\R_n}}\rho(\xi)e(-\langle x, \xi  \rangle)d\xi,
\] which
is a well-defined distribution on $\R^n$ with value in
$A^\infty$ (it will be clear later what a distribution means). Following \cite[Prop. 8]{connes80}, we define the multiplier of $A^\infty \rtimes \R^n$ which extends to an element
$D_\rho\in M(A\rtimes \R^n)$ by
\[
D_\rho:=\int_{\R^n} \widehat{\rho}(x)V_x dx.
\]
Following \cite[Definition 3.1]{baaj1}, { $D_\rho$ acts on the smooth sub-algebra $\mathcal{S}(\R^n,A^\infty)$ of $A\rtimes_\alpha \R^n$ by the oscillatory integral (see \cite[Section 3.3]{Abels12})
\[
D_\rho(u)(t):=\int_{\R^n}\int_{\R_n} \alpha_{-t}({\rho}(\xi))u(s)e(-\langle (t-s), \xi  \rangle)dsd\xi.
\]

To motivate the above equation, let us take $\rho \in \mathcal{S}(\R^n,A^\infty).$ Then  

	\begin{align*}
D_\rho(u)(t) &= \Big(\int_{\R^n} \widehat{\rho}(s)V_s u ds\Big)(t)
\\ &=  \int_{\R^n} \alpha_{-t}(\widehat{\rho}(s))V_s(u(t)) ds		
\\ &= \int_{\R^n} \alpha_{-t}(\widehat{\rho}(s))u(t-s) ds		
\\ &= \int_{\R^n} \alpha_{-t}(\widehat{\rho}(t-s))u(s) ds	
\\ &= \int_{\R^n}\int_{\R_n} \alpha_{-t}({\rho}(\xi))u(s)e(-\langle (t-s), \xi  \rangle)dsd\xi.
\end{align*}
\noindent Note that the above integrals exist in the usual sense.

The set (norm closure) of all multipliers, which come from order zero symbols, is denoted by $\mathcal{D}(A\rtimes \R^n).$ From \cite[Prop. 8]{connes80} and \cite{baaj1}  there is an exact sequence

\[
\begin{CD}
0 @>{}>>A\rtimes_\alpha \R^n@>{\phi}>> \mathcal{D}(A\rtimes_\alpha \R^n) @>{\psi}>>A\otimes C(S^{n-1}) @>{}>>0
\end{CD}.
\]
This exact sequence is often called the pseudo-differential extension.
It is well known that there is a non-degenerate morphism $C^*(\R^n)$ to  $M(A\rtimes_\alpha \R^n)$. So this morphism extends to the multiplier algebra of $C^*(\R^n)$ and in particular to the sub-algebra $\mathcal{D}(C^*(\R^n))$. So if we say $D \in \mathcal{D}(C^*(\R^n)),$ we view $D$ inside  $M(A\rtimes_\alpha \R^n).$

\begin{theorem}\label{pseudopropeerties2}
	For $a \in A \hookrightarrow M(A\rtimes \R^n)$ and $D \in \mathcal{D}(C^*(\R^n))$ , we have $[D,a] \in A\rtimes_\alpha \R^n.$ 
\end{theorem}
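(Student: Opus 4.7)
The strategy is to realize the commutator $[D,a]$ inside Connes' pseudodifferential algebra $\mathcal{D}(A \rtimes_\alpha \R^n)$ and to show that its principal symbol vanishes; the exactness of the pseudodifferential extension
\[
0 \longrightarrow A \rtimes_\alpha \R^n \longrightarrow \mathcal{D}(A \rtimes_\alpha \R^n) \xrightarrow{\ \psi\ } A \otimes C(S^{n-1}) \longrightarrow 0
\]
then identifies $[D,a]$ as an element of the ideal $A \rtimes_\alpha \R^n$.

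The first step is to check that both $D$ and $a$ sit in $\mathcal{D}(A \rtimes_\alpha \R^n)$. A scalar order-zero symbol $\rho \in S^0(\R_n,\C)$ is automatically an element of $S^0(\R_n, A^\infty)$ via the unital embedding $\C \hookrightarrow A^\infty$, and the two resulting versions of $D_\rho$ agree under the canonical morphism $M(C^*(\R^n)) \to M(A \rtimes_\alpha \R^n)$; taking norm closures gives $\mathcal{D}(C^*(\R^n)) \subseteq \mathcal{D}(A \rtimes_\alpha \R^n)$, so $D$ lies there. For $a \in A^\infty$, the constant symbol $\rho_a(\xi) \equiv a$ is in $S^0(\R_n, A^\infty)$ and satisfies $D_{\rho_a} = a$; density of $A^\infty$ in $A$ and norm-closedness of $\mathcal{D}(A \rtimes_\alpha \R^n)$ then yield $A \subseteq \mathcal{D}(A \rtimes_\alpha \R^n)$.

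Since $\mathcal{D}(A \rtimes_\alpha \R^n)$ is an algebra, $[D,a]$ is in it, and multiplicativity of $\psi$ gives $\psi([D,a]) = [\psi(D), \psi(a)]$. Under the above identifications, $\psi(D)$ is the scalar principal symbol $s \in C(S^{n-1})$ embedded in $A \otimes C(S^{n-1})$ as $1 \otimes s$, whereas $\psi(a)$ is the constant function $a \otimes 1$. As $1 \otimes s$ is central in $A \otimes C(S^{n-1})$, these two commute and $\psi([D,a]) = 0$. Exactness of the extension concludes the proof.

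The key subtlety is the compatibility of the two pseudodifferential extensions, namely, that the inclusion $\mathcal{D}(C^*(\R^n)) \hookrightarrow \mathcal{D}(A \rtimes_\alpha \R^n)$ intertwines the scalar symbol map into $C(S^{n-1})$ with the $A$-valued symbol map into $A \otimes C(S^{n-1})$ through the unital embedding $C(S^{n-1}) \hookrightarrow A \otimes C(S^{n-1})$; this is the point that must be verified with care, but once in hand the rest is purely formal. If one prefers a direct approach avoiding this abstract compatibility, one can mirror the argument of Lemma~\ref{lem:commutator}: reduce by density to $a \in A^\infty$ and $D = D_\rho$ with $\rho \in S^0(\R_n,\C)$, write $[D_\rho, a] = \int \hat\rho(x)(\alpha_x(a) - a) V_x\, dx$, and Taylor-expand $\alpha_x(a) - a = \sum_{1 \leq |\beta| \leq N-1} \frac{x^\beta}{\beta!} \delta^\beta(a) + \sum_{|\beta| = N} \frac{x^\beta}{\beta!} R_\beta(x)$; each polynomial term equals $\frac{1}{(2\pi i)^{|\beta|}} \delta^\beta(a) \cdot D_{\partial_\xi^\beta \rho}$ with $D_{\partial_\xi^\beta \rho} \in C^*(\R^n)$ (since $\partial_\xi^\beta \rho$ is of negative order hence has vanishing principal symbol), and for $N > n$ repeated integration by parts in Fourier shows the remainder is an $L^1(\R^n, A)$-function. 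Both types of terms lie in $A \rtimes_\alpha \R^n$, and density in $D$ and $a$ then finishes the argument.
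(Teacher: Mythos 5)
Your proposal is correct; worth noting is that the paper itself gives no self-contained argument for this statement, deferring instead to Baaj (Section~4) and Debord--Skandalis. Your first argument --- locating $[D,a]$ inside the pseudodifferential algebra $\mathcal{D}(A\rtimes_\alpha\R^n)$, noting that $\psi(D)=1\otimes s$ and $\psi(a)=a\otimes 1$ commute in $A\otimes C(S^{n-1})$, and invoking exactness of the pseudodifferential extension --- is the clean conceptual route and is essentially what the cited references provide; you correctly isolate the one point that requires care, namely that the inclusion $\mathcal{D}(C^*(\R^n))\hookrightarrow\mathcal{D}(A\rtimes_\alpha\R^n)$ intertwines the scalar symbol map with the $A$-valued one through the unital embedding $C(S^{n-1})\hookrightarrow A\otimes C(S^{n-1})$. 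Your alternative Taylor-expansion argument, which turns the polynomial terms into negative-order scalar pseudodifferential operators left-multiplied by $\delta^\beta(a)\in A$ and controls the order-$N$ remainder as an $L^1(\R^n,A)$-kernel, is also correct, is more elementary (it can avoid treating the pseudodifferential extension as a black box, since the Fourier transform of a negative-order scalar symbol is directly seen to lie in $L^1$), and closely parallels the paper's own Lemma~\ref{lem:commutator}. Either route suffices; one cosmetic slip in the second is that each polynomial term should carry the factor $1/\beta!$ inherited from the Taylor expansion.
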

\begin{proof}
	See \cite[Section 4]{baaj1}, also \cite[Proposition 4.3]{Skan15} for a more general version of this property.
\end{proof}

Let us recall the definition of asymptotic expansion of a symbol. For a decreasing divergent  sequence $(m_j)_{j \in \{0,1,2,\cdots \}}$, and $\rho_j \in S^{m_j}(\R_n, A^\infty)$, we say $\rho \in S^{m_0}(\R_n, A^\infty)$ admits an asymptotic expansion $\sum\rho_j$ (written as $\rho \sim \sum\rho_j$), if for all integers $k \ge 1,$ 
$$\rho - \sum_{j \le k}\rho_j \in S^{m_k}(\R_n, A^\infty).$$ 
For a multi-index $k$, and $a \in A^\infty$, let us denote the $k$-th derivative of $a$ (with respect to the action of $\R^n$) by $\delta^k(a).$
\begin{theorem}\label{pseudopropeerties3}
	For $\rho_1 \in S^{m_1}(\R_n, A^\infty)$ and  $\rho_2 \in S^{m_2}(\R_n, A^\infty)$, there exists a unique $\rho \in S^{m_1+m_2}(\R_n, A^\infty)$  such that $D_\rho = D_{\rho_1}D_{\rho_2}.$ Also $\rho$ admits an asymptotic expansion:
	
	$$\rho(\xi) \sim \sum_k \frac{ i^{\absv{k}}}{k!}\rho_{1}^{(k)}(\xi)\delta^k(\rho_2(\xi)). $$ 
	
\end{theorem}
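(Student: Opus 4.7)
The plan is to establish the formula by computing the action of $D_{\rho_1}D_{\rho_2}$ directly on test functions $u \in \mathcal{S}(\mathbb{R}^n, A^\infty)$ and then reading off the composed symbol, with the asymptotic expansion obtained via a Taylor expansion in the group variable combined with integration by parts in the frequency variable. This follows the classical strategy for the H\"ormander calculus, adapted to the noncommutative setting as in Baaj's work.

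More concretely, I would first write
\[
(D_{\rho_1}D_{\rho_2})(u)(t)
= \int\!\!\int\!\!\int\!\!\int
\alpha_{-t}(\rho_1(\xi_1))\,\alpha_{-s_1}(\rho_2(\xi_2))\,u(s_2)\,
e(-\langle t-s_1,\xi_1\rangle)\,e(-\langle s_1-s_2,\xi_2\rangle)\,ds_1\,ds_2\,d\xi_1\,d\xi_2,
\]
view this as an oscillatory integral, and rewrite the middle factor using $\alpha_{-s_1} = \alpha_{-t}\circ\alpha_{t-s_1}$. The next step is to Taylor expand $\alpha_{y}(\rho_2(\xi_2))$ in the variable $y=t-s_1$ around $0$, using
\[
\alpha_{y}(\rho_2(\xi_2)) \;=\; \sum_{|k|<N} \frac{y^k}{k!}\,\delta^k(\rho_2(\xi_2)) + R_N(y,\xi_2),
\]
where $\delta^k$ is the $k$-th derivative with respect to the action. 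The monomial $y^k$ can then be produced from the phase via the identity $y^k\, e(-\langle y,\xi_1\rangle) = (-2\pi i)^{-|k|}\,\partial_{\xi_1}^k\, e(-\langle y,\xi_1\rangle)$. Integration by parts in $\xi_1$ moves this derivative onto $\rho_1$, and after integrating out $s_1$ (producing a delta in $\xi_1-\xi_2$) the $k$-th term of the expansion collapses to $\frac{i^{|k|}}{k!}\rho_1^{(k)}(\xi)\,\delta^k(\rho_2(\xi))$ up to the standard normalisation, which matches the claimed formula. Uniqueness of $\rho$ is automatic from the injectivity of the map $\rho\mapsto D_\rho$ on symbols modulo smoothing operators.

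The main obstacles are analytic rather than combinatorial: one must (i) justify the oscillatory integral manipulations in the locally convex setting, controlling the seminorms $p_i$ uniformly, and (ii) show that the remainder $R_N$ gives a symbol of order $m_1 + m_2 - N$, so that the partial sums indeed define an asymptotic expansion in $S^{m_1+m_2}(\R_n, A^\infty)$. Both points require the anisotropic symbol estimates from Connes \cite{connes80} and the detailed bookkeeping carried out by Baaj in \cites{baaj1,baaj2}; in fact in this appendix the cleanest route is to invoke those results directly rather than reprove them. Finally, existence of a symbol $\rho$ with the prescribed asymptotic expansion follows from a standard Borel-type construction: choose a smooth cut-off $\chi(\xi)$ vanishing near $0$ and equal to $1$ at infinity, and set $\rho(\xi) = \sum_k \chi(\xi/c_k)\,\frac{i^{|k|}}{k!}\rho_1^{(k)}(\xi)\,\delta^k(\rho_2(\xi))$ with $c_k\to\infty$ chosen fast enough that each seminorm of the tail is summable.
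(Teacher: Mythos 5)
The paper's proof of Theorem \ref{pseudopropeerties3} is a one-line citation to Baaj \cite[Proposition 3.2]{baaj1} (with \cite[Theorem 2.2]{Lesch16} for the twisted case), so your closing remark that the cleanest route here is to invoke those results directly matches the paper exactly. The additional outline you supply of how Baaj's argument actually goes — writing $D_{\rho_1}D_{\rho_2}$ as a fourfold oscillatory integral, inserting $\alpha_{-s_1}=\alpha_{-t}\circ\alpha_{t-s_1}$, Taylor-expanding $\alpha_y(\rho_2(\xi_2))$ in $y=t-s_1$, converting monomials $y^k$ into $\xi_1$-derivatives of the phase, and integrating by parts to land on $\rho_1^{(k)}\delta^k(\rho_2)$ — is the standard H\"ormander-type derivation and is correct in spirit.

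One point to be careful about: the last paragraph inverts the logic of the existence claim. The symbol $\rho$ with $D_\rho = D_{\rho_1}D_{\rho_2}$ \emph{exactly} is not obtained by Borel-summing the asymptotic series; it is read off directly from the composed oscillatory integral once one has shown that that integral defines an element of $S^{m_1+m_2}(\R_n,A^\infty)$, and the asymptotic expansion is then deduced from that exact symbol together with the remainder estimates. A Borel construction would only produce some symbol whose operator agrees with $D_{\rho_1}D_{\rho_2}$ modulo a smoothing error, which is strictly weaker than the theorem's statement. Relatedly, your phrase ``injectivity of $\rho\mapsto D_\rho$ on symbols modulo smoothing operators'' should be plain injectivity of $\rho\mapsto D_\rho$ on symbols — one recovers $\widehat{\rho}$ from $D_\rho$ by pairing against suitable test functions, so uniqueness is exact, not up to smoothing. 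Finally, with the paper's normalisation $e(t)=e^{2\pi i t}$, the identity $y^k e(-\langle y,\xi_1\rangle) = (-2\pi i)^{-|k|}\partial_{\xi_1}^k\, e(-\langle y,\xi_1\rangle)$ produces powers of $2\pi$ that must be reconciled with the bare $i^{|k|}/k!$ in the displayed formula; the stated coefficients presume the conventions of \cite{baaj1}, which is another reason the citation is the safest route.
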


\begin{proof}
	See \cite[Proposition 3.2]{baaj1}. Also \cite[Theorem 2.2]{Lesch16}, for twisted dynamical systems.
\end{proof}

\begin{theorem}\label{pseudopropeerties4}
		For $\rho \in S^{0}(\R_n, A^\infty)$ the the adjoint of $D_\rho$, $(D_\rho)^*$ exists and  
		$(D_\rho)^* = D_\rho',$ where $\rho'$ admits an asymptotic expansion:
	
	$$\rho'(\xi) \sim \sum_k \frac{ i^{\absv{k}}}{k!}\delta^k((\rho')^{(k)}(\xi)^*). $$ 
\end{theorem}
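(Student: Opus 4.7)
The plan is to compute the adjoint directly from the integral representation of $D_\rho$ and then read off the symbol and its asymptotic expansion by Fourier inversion combined with a Taylor expansion of the action $\alpha$. In spirit, I would mimic the proof of the composition formula (Theorem \ref{pseudopropeerties3}) that Baaj gives in \cite{baaj1}: first carry out the computation formally on a dense class of symbols (e.g.\ $\rho \in \mathcal{S}(\R_n, A^\infty)$), then extend to $S^0(\R_n, A^\infty)$ using the seminorm estimates of Connes' anisotropic calculus.

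First I would fix $\rho \in S^0$ and compute the formal adjoint on test elements $u \in \mathcal{S}(\R^n, A^\infty) \subset A\rtimes_\alpha \R^n$. Using $V_x^* = V_{-x}$ and $V_x a V_x^* = \alpha_x(a)$, a direct manipulation gives
\[
(D_\rho)^* \;=\; \int_{\R^n} V_{-x}\,\widehat{\rho}(x)^*\,dx \;=\; \int_{\R^n} \alpha_{-x}\!\bigl(\widehat{\rho}(x)^*\bigr)\,V_{-x}\,dx,
\]
and after the substitution $x \mapsto -x$ this equals $\int_{\R^n} \alpha_x\!\bigl(\widehat{\rho}(-x)^*\bigr) V_x\,dx$. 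Thus, if the candidate symbol $\rho'$ exists, its inverse Fourier transform must be $\widehat{\rho'}(x) = \alpha_x\bigl(\widehat{\rho}(-x)^*\bigr)$. Fourier inverting,
\[
\rho'(\xi) \;=\; \int_{\R^n} \alpha_x\!\bigl(\widehat{\rho}(-x)^*\bigr)\,e(\langle x,\xi\rangle)\,dx,
\]
understood as an oscillatory integral in the sense of Appendix \ref{sec:calculus}.

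Next, I would extract the asymptotic expansion. Formally Taylor-expanding $\alpha_x(a) \sim \sum_k \frac{x^k}{k!}\delta^k(a)$ with $\delta^k$ the iterated infinitesimal generators of $\alpha$, and recognising that multiplication by $x^k$ under the Fourier transform corresponds, up to $i^{|k|}$, to $\partial_\xi^k$ applied to $\rho(\xi)^*$, the integral above yields the candidate expansion
\[
\rho'(\xi) \;\sim\; \sum_k \frac{i^{|k|}}{k!}\,\delta^k\!\bigl(\rho^{(k)}(\xi)^*\bigr),
\]
which matches the asymptotic series claimed in the statement (the right-hand side of the statement, as written, should be read as involving $\rho$, which is the standard form of the adjoint expansion in Baaj's calculus). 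That each term lies in the correct symbol class $S^{-|k|}$ is immediate from the Leibniz-type symbol estimates of Connes' calculus applied to $\delta^k$ and $\partial_\xi^k$.

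The main obstacle will be the remainder analysis: one must show that the partial sums differ from $\rho'$ by an element of $S^{-N}$ for every $N$, so that the expansion is genuinely asymptotic in the sense defined above. This follows by writing the Taylor remainder of $\alpha_x(a)$ in integral form, substituting it into the oscillatory integral for $\rho'(\xi) - \sum_{|k|<N}\tfrac{i^{|k|}}{k!}\delta^k(\rho^{(k)}(\xi)^*)$, and estimating the resulting oscillatory integral by an integration-by-parts argument in $x$ (using $e(\langle x,\xi\rangle)$ as the oscillating factor and controlling factors of $|\xi|^{-1}$). This is exactly the scheme used by Baaj in \cite{baaj1} for the product formula (Theorem \ref{pseudopropeerties3}), and the estimates needed for the adjoint are strictly simpler because only a single symbol appears. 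Once the remainder bounds are in place, uniqueness of $\rho'$ (up to elements with vanishing symbol, i.e.\ in $S^{-\infty}$) is automatic from the pseudodifferential extension, and the theorem follows.
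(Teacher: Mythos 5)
Your proposal is correct and reconstructs the argument that the paper delegates entirely to a citation: the in-text ``proof'' of Theorem~\ref{pseudopropeerties4} is simply ``See \cite[Proposition 3.3]{baaj1},'' so there is nothing in the paper to compare line-by-line. Your scheme — derive $\widehat{\rho'}(x)=\alpha_x\bigl(\widehat{\rho}(-x)^*\bigr)$ from $V_x^*=V_{-x}$ and $V_x a V_x^*=\alpha_x(a)$, Fourier invert, Taylor-expand $\alpha_x(a)\sim\sum_k\tfrac{x^k}{k!}\delta^k(a)$, convert $x^k$ into $\partial_\xi^k$ up to powers of $i$, and control the Taylor remainder by integration by parts in the oscillatory integral — is precisely Baaj's adjoint computation, and your remark that the remainder estimates are a simplification of those for the product formula (Theorem~\ref{pseudopropeerties3}) is accurate. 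You also correctly flag that the displayed expansion in the statement contains a typo: the right-hand side must involve $\rho^{(k)}(\xi)^*$, not $(\rho')^{(k)}(\xi)^*$, since otherwise $\rho'$ would be defined circularly; your derivation produces the corrected formula. One small point worth stating explicitly if this were to be written out in full: you should justify that $\delta^k$ (acting on $A^\infty$) passes through the $x$-integral and commutes with $\partial_\xi^k$ and with the involution, so that $\int x^k\,\delta^k\bigl(\widehat{\rho^*}(x)\bigr)e(\langle x,\xi\rangle)\,dx = c_k\,\delta^k\bigl(\rho^{(k)}(\xi)^*\bigr)$; this is routine because $\delta$ is independent of $x$ and $\xi$, but it is the step that turns the formal Taylor expansion into the claimed symbol expansion.
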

\begin{proof}
	See \cite[Proposition 3.3]{baaj1}. Also \cite[Theorem 2.2]{Lesch16}, for twisted dynamical systems.
\end{proof}

\begin{remark}
Since the unitisation of $A$, $A'$, sits inside $M(A)$ non-degenerately, we get (\cite[Proposition 3.2]{Skan15})  a non-degenerate morphism from $A' \rtimes \R^n$ to $M(A\rtimes \R^n)$ giving a morphism from $\mathcal{D}(A'\rtimes_\alpha \R^n)$ to $M(A\rtimes \R^n)$. Hence, though we adjoin a unit for the non-unital $A$, ultimately we end up with getting an element in $M(A\rtimes \R^n).$  
\end{remark}

\section{Proof of Lemma \ref{prop:Sigma}}
\label{app:schwarz}
In the following, we outline a proof of Lemma \ref{prop:Sigma} that $\frac{\partial \Sigma}{\partial \xi_j}$, $j=1, ..., n$, is a Schwartz function. 

\begin{lemma}
\[
\begin{split}
e^{isc(\xi)}&=\cos(s|\xi|)+ic(\xi)\frac{\sin(s|\xi|)}{|\xi|}\\
\frac{\partial}{\partial \xi_j} e^{isc(\xi)}&=s\sin(s|\xi|)\frac{\xi_j}{|\xi|}+i \Big( c^j \frac{\sin(s|\xi|)}{|\xi|}+c(\xi)\frac{\partial }{\partial \xi_j}\frac{\sin(s|\xi|)}{|\xi|}\Big).
\end{split}
\]
\end{lemma}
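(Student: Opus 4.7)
The plan is to differentiate the spectral integral representation
\[
\Sigma(\xi) = \int_{\mathbb{R}} \widehat{\chi}(s)\, e^{isc(\xi)}\, ds
\]
under the integral sign using the explicit formula for $\partial_j e^{isc(\xi)}$ provided by the preliminary lemma at the end of the appendix, and then to identify each resulting piece as a Schwartz function of $\xi \in \mathbb{R}^n$ valued in $\mathbb{C}_n$.

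The decisive analytic input will be a Paley--Wiener principle: by hypothesis $s\widehat{\chi}(s)$ is smooth with compact support, so its Fourier transform lies in $\mathcal{S}(\mathbb{R})$; in particular $\chi'$ is Schwartz. More generally, for any bounded smooth family $\varphi(s,\lambda)$ in $s$ depending smoothly on $\lambda \in \mathbb{R}$, the integral $\lambda \mapsto \int s\widehat{\chi}(s)\, \varphi(s,\lambda)\, ds$ is Schwartz in $\lambda$, since one may differentiate under the integral and compactness of the support makes every derivative absolutely convergent. Substituting the preliminary lemma into $\partial_j\Sigma$ and expanding $\partial_j(\sin(s|\xi|)/|\xi|) = \xi_j s\cos(s|\xi|)/|\xi|^2 - \xi_j\sin(s|\xi|)/|\xi|^3$ in closed form, $\partial_j\Sigma$ breaks into three groups of integrands. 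The first, proportional to $\int s\widehat{\chi}(s)\sin(s|\xi|)\xi_j/|\xi|\, ds$, vanishes by the oddness of $\widehat{\chi}$ paired with $\sin$. The second reduces, after using the Clifford identity $c(\xi)^2 = |\xi|^2$ to regroup coefficients, to expressions of the form $(\text{bounded matrix function of }\xi/|\xi|) \cdot \int s\widehat{\chi}(s)\cos(s|\xi|)\, ds$, which by the Paley--Wiener step are Schwartz. The third group consists of residual terms of the form $(\text{bounded matrix}) \cdot \chi(|\xi|)/|\xi|^k$ with $k \ge 1$, obtained from integrals $\int \widehat{\chi}(s)\sin(s|\xi|)/|\xi|^k\, ds$ via Fourier inversion.

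The hard part will be controlling this third group: since $\chi(|\xi|) \to \pm 1$ at infinity rather than decaying, each residual term a priori has only $O(|\xi|^{-1})$ decay, not Schwartz decay. My plan is to assemble the residual terms into the single matrix-valued expression $(\chi(|\xi|)/|\xi|)\cdot c(P_{\xi^\perp} e_j)$, where $P_{\xi^\perp}$ denotes orthogonal projection onto the hyperplane perpendicular to $\xi$ (a structure forced by the Clifford relations), and then split $\chi(|\xi|) = 1 + (\chi(|\xi|) - 1)$ for $|\xi|$ large. The subtracted piece $(\chi(|\xi|) - 1)/|\xi|$ is Schwartz at infinity because $\chi - 1$ is Schwartz there, and smoothness at the origin is automatic from the even power-series expansion $\chi(c(\xi)) = c(\xi)\sum_k \chi^{(2k+1)}(0)|\xi|^{2k}/(2k+1)!$ coming from functional calculus. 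The leading constant-$1$ piece must then be shown to vanish by a further algebraic identity among the Clifford generators, or alternatively reinterpreted through the distributional regularisation of the divergent oscillatory integral introduced in the main text. Verifying this cancellation of the principal $1/|\xi|$ symbol is the central technical step, and is where I would spend the bulk of the work.
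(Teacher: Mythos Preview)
You have misidentified the statement you are asked to prove. The lemma in question is the elementary identity
\[
e^{isc(\xi)}=\cos(s|\xi|)+ic(\xi)\,\frac{\sin(s|\xi|)}{|\xi|}
\]
together with its $\xi_j$-derivative. The paper proves this in two lines: expand $e^{isc(\xi)}=\sum_n (isc(\xi))^n/n!$, use the Clifford relation $c(\xi)^2=|\xi|^2$ to collapse even and odd parts to the cosine and sine series, and then differentiate the resulting closed formula termwise. Your proposal, by contrast, \emph{assumes} this identity (you refer to it as ``the preliminary lemma at the end of the appendix'' and use its formula for $\partial_j e^{isc(\xi)}$ as an input) and sets out to prove that $\partial_j\Sigma$ is a Schwartz function. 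That is a different lemma in the paper (Lemma~\ref{prop:Sigma}), not the statement you were given.

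Even read as an attempt at Lemma~\ref{prop:Sigma}, your plan has a genuine gap at exactly the point you flag as ``the central technical step.'' The residual term $c(P_{\xi^\perp}e_j)/|\xi|$ coming from the constant $1$ in your splitting $\chi(|\xi|)=1+(\chi(|\xi|)-1)$ does \emph{not} cancel by any Clifford identity: for $n\ge 2$ and $\xi$ generic, $P_{\xi^\perp}e_j\neq 0$, so $c(P_{\xi^\perp}e_j)/|\xi|$ is a genuine order $-1$ term that is not Schwartz. The paper avoids this problem altogether by never separating the $s$-integral into a Fourier-inversion piece plus remainder. Instead it writes everything through $H(s,\xi)=\sin(s|\xi|)/(s|\xi|)$, observes that $\partial_s^{\,l}\big(\tfrac{d^k}{dy^k}h\big)(s|\xi|)=|\xi|^l\,\tfrac{d^{k+l}}{dy^{k+l}}h(s|\xi|)$, and integrates by parts in $s$ to trade each factor of $|\xi|$ for a derivative falling on the compactly supported function $s\widehat{\chi}(s)$. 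This gives uniform bounds on $|\xi|^l\,\partial^J_\xi\Sigma$ directly, with no principal-symbol cancellation needed.
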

\begin{proof}
Using the power series, we have 
\[
\begin{split}
e^{isc(\xi)}&=\sum_{n=0}^\infty \frac{\big(isc(\xi)\big)^n}{n!}\\
&=\sum \frac{\big(is c(\xi)\big)^{2n}}{(2n)!}+\sum \frac{\big( is c(\xi)\big)^{2n+1}}{(2n+1)!}\\
&=\sum \frac{(-1)^{n} s^{2n} |\xi|^{2n}}{(2n)!} +i\sum \frac{(-1)^n s^{2n+1}c(\xi)|\xi|^{2n}}{(2n+1)!}\\
&=\cos(s|\xi|)+i\frac{c(\xi)}{|\xi|}\sin(s|\xi|). 
\end{split}
\]

Let $\{e^j\}$ be an orthonormal basis of $\mathbb{R}^n$, and  denote $c^j=c(e^j)$. Then $c(\xi)=\sum_j \xi_jc^j$ for $\xi=\sum_j \xi_j e^j$. Differentiate $e^{isc(\xi)}$ with respect to $\xi_j$. 
\[
\frac{\partial}{\partial \xi_j} e^{isc(\xi)}=s\sin(s|\xi|)\frac{\xi_j}{|\xi|}+i \Big( c^j \frac{\sin(s|\xi|)}{|\xi|}+c(\xi)\frac{\partial }{\partial \xi_j}\frac{\sin(s|\xi|)}{|\xi|}\Big)
\]
\end{proof}

Notice that $\frac{\sin(y)}{y}$ is a smooth even function in term of the variable $y$. Therefore, the function 
\[
\frac{\sin(s|\xi|)}{s|\xi|}
\]
is a smooth function with respect to $s$ and $|\xi|^2$. Therefore, $\frac{\sin(s|\xi|)}{s|\xi|}$ is a smooth function with respect to $s,\xi_j$, for $j=1, \cdots, n$.

Set $H(s, \xi)=\frac{\sin(s|\xi|)}{s|\xi|}$. Then 
\[
\frac{\partial}{\partial \xi_j} e^{isc(\xi)}=s^2H(s, \xi)\xi_j+is\big(c^jH(s, \xi)+ c(\xi)\frac{\partial}{\partial \xi_j} H(s, \xi)\big). 
\]

Recall that the function $\Sigma$ is defined as follows
\[
1\otimes \chi(c(\xi)),\ \text{where}\ \chi(c(\xi))= \int_{\mathbb{R}} \widehat{\chi}(s) e^{isc(\xi)}ds.
\]
The derivative $\frac{\partial}{\partial \xi_j} \chi(c(\xi))$ can be expressed as follows,
\[
\frac{\partial}{\partial \xi_j} \chi(c(\xi))=\int_{\mathbb{R}} \widehat{\chi}(s)s \Big(sH(s, \xi)\xi_j+ i s\big(c^jH(s, \xi)+ c(\xi)\frac{\partial}{\partial \xi_j} H(s, \xi)\big)\Big)ds.
\]
We remark that as $\widehat{\chi}(s)s$ is a compactly supported smooth function, and $H(s, \xi)$ is a smooth function in both $s$ and $\xi$, the above integral formula for $\frac{\partial}{\partial \xi_j} \chi(c(\xi))$ holds true. We can even conclude that $\frac{\partial}{\partial \xi_j} \chi(c(\xi))$ is a smooth function with respect to the variable $\xi$. 

In the following, we show that $\frac{\partial}{\partial \xi_j} \chi(c(\xi))$ is a Schwartz function. 

\begin{lemma}\label{lem:h}
Let $h(y)=\frac{\sin(y)}{y}$. $h(y)$ is a smooth function on $\mathbb{R}$. Furthermore, for any $k$, there are polynomials $\phi_n(y)$ and $\psi_n(y)$ of degree less than or equal to $n$ such that 
\[
\frac{d^n}{dy^n}h(y)=\frac{\sin(y)\phi_n(y)+\cos(y)\psi_n(y)}{y^{n+1}}.
\]
\end{lemma}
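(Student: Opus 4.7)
The plan is a straightforward induction on $n$, with a preliminary observation to dispose of smoothness. For smoothness of $h$ near $0$, I would expand $\sin(y)=\sum_{k\ge 0}(-1)^k y^{2k+1}/(2k+1)!$ termwise so that
\[
h(y)=\sum_{k\ge 0}\frac{(-1)^k y^{2k}}{(2k+1)!},
\]
which is entire; away from $0$ the smoothness is obvious as a quotient.

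For the recursive formula, the base case $n=0$ is given directly by $\phi_0(y)=1$, $\psi_0(y)=0$. For the inductive step, assume
\[
\frac{d^n h}{dy^n}(y)=\frac{\sin(y)\phi_n(y)+\cos(y)\psi_n(y)}{y^{n+1}}
\]
with $\deg\phi_n,\deg\psi_n\le n$. Differentiating once more with the quotient rule and multiplying through by $y^{n+2}$ in the numerator, I would collect terms according to $\sin(y)$ and $\cos(y)$ to obtain
\[
\phi_{n+1}(y)=y\phi_n'(y)-y\psi_n(y)-(n+1)\phi_n(y),\qquad
\psi_{n+1}(y)=y\phi_n(y)+y\psi_n'(y)-(n+1)\psi_n(y).
\]
Each of the four summands in these two expressions has degree at most $n+1$ under the induction hypothesis (the two summands $y\psi_n$ and $y\phi_n$ saturate the bound, while $y\phi_n'$ and $y\psi_n'$ contribute degree $\le n$), so the degree bound is preserved and the induction closes.

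There is no real obstacle here; the only mildly delicate point is tracking the degree bound carefully, since differentiating drops the degree by one but multiplying by $y$ raises it by one, and it is important that these operations are balanced in the recursion so that the degree does not exceed $n+1$.
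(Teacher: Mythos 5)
Your induction argument is correct, and the recursion you derive for $\phi_{n+1}$ and $\psi_{n+1}$ together with the degree bookkeeping is exactly the content the paper is alluding to when it says the lemma "can be proved by induction with direct computation." Your approach matches the paper's.
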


\begin{proof}This can be proved by induction with direct computation. 
\end{proof}

\begin{lemma}\label{lem:H}
For every $J=(j_1,\cdots, j_m)\in \mathbb{N}\times \cdots \mathbb{N}$, there are polynomials  $\Phi_{J,k}$ and $\Psi_{J,k}$ of $n$-variables  with $2(j_1+\cdots+j_m))\geq \text{deg}(\Phi_{J,k})$ and $ (\text{deg}(\Psi_{J,k})+1)$ satisfying
\[
\frac{\partial^{j_1+\cdots +j_m}}{\partial \xi_J}H(s, \xi)=
\sum_{k=0}^{j_1+\cdots+j_m} s^k \frac{d^k}{dy^k}h(s|\xi|) \frac{\Phi_{J,k}+|\xi | \Psi_{J, k}}{|\xi|^{2(j_1+\cdots +j_m)+1}}
\]
\end{lemma}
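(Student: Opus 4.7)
My plan is to prove the identity by induction on $N := j_1+\cdots+j_m$, the total order of differentiation. For the base case $N=0$ the equation reduces to $H(s,\xi)=h(s|\xi|)$, which matches the right-hand side by taking $k=0$, $\Phi_{J,0}=0$, and $\Psi_{J,0}=1$ (so that the numerator $|\xi|\Psi_{J,0}$ cancels the factor $|\xi|$ in the denominator). For $N=1$ one differentiates $h(s|\xi|)$ with respect to one coordinate $\xi_\ell$ and uses $\partial_{\xi_\ell}|\xi|=\xi_\ell/|\xi|$ to obtain $s\,h'(s|\xi|)\,\xi_\ell/|\xi|$, which rewrites as $s\,h'(s|\xi|)\cdot(|\xi|\,\xi_\ell)/|\xi|^{3}$; here $\Phi_{J,1}=0$ and $\Psi_{J,1}=\xi_\ell$ is a degree-$1$ polynomial, matching the claimed bounds.

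For the inductive step, assume the formula at order $N$ and apply $\partial_{\xi_\ell}$ to each summand
\[
s^{k}\,h^{(k)}(s|\xi|)\,\frac{\Phi_{J,k}+|\xi|\,\Psi_{J,k}}{|\xi|^{2N+1}}.
\]
The derivative produces three kinds of contributions: (i) the chain rule applied to $h^{(k)}(s|\xi|)$ gives $s^{k+1}h^{(k+1)}(s|\xi|)\,\xi_\ell/|\xi|$, which feeds into the $(k+1)$-th summand of the new expansion after multiplying numerator and denominator by appropriate powers of $|\xi|$; (ii) the derivatives $\partial_{\xi_\ell}\Phi_{J,k}$ and $\partial_{\xi_\ell}\Psi_{J,k}$ lower polynomial degree by one; and (iii) the derivative of $|\xi|^{-(2N+1)}$ and $|\xi|^{-2N}$ produce $-(2N+1)\xi_\ell/|\xi|^{2N+3}$ and $-2N\xi_\ell/|\xi|^{2N+2}$ respectively. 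The key algebraic identity that lets us reassemble everything in the desired form is $|\xi|^{2}=\xi_1^{2}+\cdots+\xi_n^{2}\in\mathbb{C}[\xi_1,\ldots,\xi_n]$: any residual odd power of $|\xi|$ in a numerator is absorbed either into the factor $|\xi|$ explicit in $|\xi|\Psi_{J',k}$, or traded for the polynomial $|\xi|^{2}$ times a single $|\xi|$.

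With this bookkeeping one reads off candidates $\Phi_{J',k}$ and $\Psi_{J',k}$ for the new multi-index $J'$ of weight $N+1$. Contribution (i) adds $\xi_\ell\cdot(\Phi_{J,k}+|\xi|\Psi_{J,k})$ to the numerator of the $(k+1)$-term at denominator $|\xi|^{2(N+1)+1}$, which after splitting into the $\Phi'+|\xi|\Psi'$ pattern uses $\xi_\ell\Phi_{J,k}$ (degree $\leq 2N+1$) and $\xi_\ell|\xi|^{2}\Psi_{J,k}$ reorganized as $|\xi|\cdot\xi_\ell|\xi|\Psi_{J,k}$ etc. Contributions (ii)–(iii) are treated analogously, and in each case a direct degree count shows $\deg \Phi_{J',k}\leq 2(N+1)$ and $\deg \Psi_{J',k}+1\leq 2(N+1)$, closing the induction.

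The main obstacle, and the bulk of the real work, is the combinatorial bookkeeping in the inductive step: each of the three sources of terms generates both a $\Phi$-type and a $\Psi$-type contribution after the $|\xi|^{2}=\sum\xi_i^{2}$ trick, and one must verify that all degree inequalities $\deg\Phi_{J',k}\leq 2(N+1)$ and $\deg\Psi_{J',k}+1\leq 2(N+1)$ are preserved simultaneously for every $k$ in the range $0\leq k\leq N+1$, including the endpoints $k=0$ (where only the derivatives of the rational factor contribute) and $k=N+1$ (where only the chain-rule contribution (i) appears). Once the bounds are confirmed for all three sources uniformly, the inductive step, and hence the lemma, follows.
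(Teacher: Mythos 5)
Your approach --- induction on $N=j_1+\cdots+j_m$ --- is exactly what the paper's one-line proof indicates, and the idea of using $|\xi|^{2}=\sum_i\xi_i^{2}$ to sort terms into the $\Phi+|\xi|\Psi$ pattern is the right one. However, your low-order verifications contain an arithmetic error that, once corrected, shows the stated degree bounds cannot hold with the denominator $|\xi|^{2N+1}$. At $N=1$ you assert $\xi_\ell/|\xi|=(|\xi|\xi_\ell)/|\xi|^{3}$, but the right-hand side equals $\xi_\ell/|\xi|^{2}$, off by a factor of $|\xi|$. The correct rewriting over $|\xi|^{3}$ is $(\xi_\ell|\xi|^{2})/|\xi|^{3}$, and since the decomposition $f=\Phi+|\xi|\Psi$ with polynomial $\Phi,\Psi$ is unique, this forces $\Phi_{J,1}=\xi_\ell|\xi|^{2}$ of degree $3>2=2N$. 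Similarly your base-case choice $\Psi_{J,0}=1$ gives $\deg\Psi_{J,0}+1=1>0=2N$.

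Tracking the recursion carefully (a chain-rule step raises the numerator degree by one and the denominator exponent by one; a derivative hitting the rational factor raises them by one and two respectively) shows the $k$-th term is naturally $s^{k}h^{(k)}(s|\xi|)\,P_{J,k}(\xi)/|\xi|^{2N-k}$ with $\deg P_{J,k}\le N$. Clearing to the common denominator $|\xi|^{2N}$ (not $|\xi|^{2N+1}$) is what makes the bounds $\deg\Phi_{J,k}\le 2N$ and $\deg\Psi_{J,k}+1\le 2N$ come out; with $|\xi|^{2N+1}$ the top term $k=N$ forces degree $2N+1$. This is in fact the version your choices $\Phi_{J,1}=0$, $\Psi_{J,1}=\xi_\ell$ are secretly verifying. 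Either normalization yields the boundedness of $(\Phi_{J,k}+|\xi|\Psi_{J,k})/|\xi|^{2N+1}$ that the paper actually uses, so the application is unaffected, but the lemma as printed is not self-consistent and your proposal asserts the stated bounds without catching that they fail. Finally, your inductive step only enumerates the three sources of terms without carrying out the degree count; given the base case already breaks as written, that count needs to be done explicitly with the corrected exponent before the induction closes.
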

\begin{proof}
This can be proved directly by induction on the total order of derivatives $\nu=j_1+\cdots +j_m$. 
\end{proof}

We now look at the function $\frac{\partial}{\partial \xi_j} \chi(c(\xi))$.  It is a sum of three terms 
\[
\int_{\mathbb{R}} \widehat{\chi}(s)s^2H(s, \xi)\xi_j ds, ic^j \int_{\mathbb{R}} \widehat{\chi}(s)s^2H(s, \xi)ds, i\int_{\mathbb{R}}\widehat{\chi}(s)s c(\xi)\frac{\partial}{\partial \xi_j} H(s, \xi)ds.
\]
To prove that $\frac{\partial}{\partial \xi_j} \chi(c(\xi))$ is a Schwartz function, it suffices to prove that each of them is a Schwartz function. As they are all similar, it is enough to prove that for a Schwartz function $\kappa(s)$, the following function 
\begin{equation}\label{eq:function}
|\xi|^l \int_{\mathbb{R}}\kappa(s) \frac{\partial^{j_1+\cdots +j_m}}{\partial \xi_J}H(s, \xi)ds
\end{equation}
is bounded for every fixed $j_1,\cdots, j_m$ and $l$. 

By Lemma \ref{lem:H}, we are reduced to prove for each $k$, the following function 
\begin{equation}\label{eq:integration}
|\xi|^l \int_{\mathbb{R}}\kappa(s) s^k \frac{d^k}{dy^k}h(s|\xi|) \frac{\Phi_{J,k}+|\xi | \Psi_{J, k}}{|\xi|^{2(j_1+\cdots +j_m)+1}}ds =\frac{\Phi_{J,k}+|\xi | \Psi_{J, k}}{|\xi|^{2(j_1+\cdots +j_m)+1}} |\xi|^l \int_{\mathbb{R}}\kappa(s) s^k \frac{d^k}{dy^k} h(s|\xi|)ds.
\end{equation}

Notice that $\frac{\partial^l}{\partial s^l} \frac{d^k}{dy^k} h(s|\xi|)= |\xi|^l\frac{d^k}{dy^k} h(s|\xi|)$. We have the following equation
\[
\frac{\Phi_{J,k}+|\xi | \Psi_{J, k}}{|\xi|^{2(j_1+\cdots +j_m)+1}}|\xi|^l \int_{\mathbb{R}}\kappa(s) s^k \frac{d^k}{dy^k} h(s|\xi|)ds
=\frac{\Phi_{J,k}+|\xi | \Psi_{J, k}}{|\xi|^{2(j_1+\cdots +j_m)+1}}\int_{\mathbb{R}}\kappa(s) s^k \frac{\partial^l}{\partial s^l} \frac{d^k}{dy^k} h(s|\xi|)ds.
\]
Integration by parts gives that the right hand side of the equation can be written as
\begin{equation}\label{eq:newfunction}
\frac{\Phi_{J,k}+|\xi | \Psi_{J, k}}{|\xi|^{2(j_1+\cdots +j_m)+1}} (-1)^l \int_{\mathbb{R}} \frac{d^l}{d s^l}\big(\kappa(s) s^k \big)\frac{d^k}{dy^k} h(s|\xi |)ds.
\end{equation}

By the degree counting, when $|\xi|$ is sufficiently large, 
\[
\frac{\Phi_{J,k}+|\xi | \Psi_{J, k}}{|\xi|^{2(j_1+\cdots +j_m)+1}}
\]
is uniformly bounded. 

By Lemma \ref{lem:h}, the function $\frac{d^k}{dy^k}h$ is uniformly bounded again for all $y$. Therefore, $\frac{d^k}{dy^k} h(s|\xi|)$ is uniformly bounded. Finally, as $\kappa$ is assumed to be a Schwartz functions, $\frac{d^l}{ds^l} \big(\kappa s^k\big)$ is again a Schwartz function. Therefore, the integral 
\[
\int_{\mathbb{R}} \frac{d^l}{d s^l}\big(\kappa(s) s^k \big)\frac{d^k}{dy^k} h(s|\xi |)ds
\]
is uniformly bounded. Hence, we summarize from the above discussion that the whole function
\[
\frac{\Phi_{J,k}+|\xi | \Psi_{J, k}}{|\xi|^{2(j_1+\cdots +j_m)+1}} (-1)^l \int_{\mathbb{R}} \frac{d^l}{d s^l}\big(\kappa(s) s^k \big)\frac{d^k}{dy^k} h(s|\xi |)ds.
\]
introduced in Equation (\ref{eq:newfunction}) is bounded, and therefore the function introduced in Equation (\ref{eq:function}) is bounded via Equation (\ref{eq:integration}).  From this property, we can conclude that the function $\frac{\partial}{\partial \xi_j} \chi(c(\xi))$ is a Schwartz function. And it follows that $\frac{\partial \Sigma}{\partial \xi_j}$ is a Schwartz function.

\begin{bibdiv}
\begin{biblist}

\bib{Abels12}{book}{
   author={Abels, Helmut},
   title={Pseudodifferential and singular integral operators},
   series={De Gruyter Graduate Lectures},
   note={An introduction with applications},
   publisher={De Gruyter, Berlin},
   date={2012},
   pages={x+222},
   isbn={978-3-11-025030-5},
   review={\MR{2884718}},
}

\bib{baaj1}{article}{
   author={Baaj, Saad},
   title={Calcul pseudo-diff\'{e}rentiel et produits crois\'{e}s de $C^*$-alg\`ebres.
   I},
   language={French, with English summary},
   journal={C. R. Acad. Sci. Paris S\'{e}r. I Math.},
   volume={307},
   date={1988},
   number={11},
   pages={581--586},
   issn={0249-6291},
   review={\MR{967366}},
}
\bib{baaj2}{article}{
   author={Baaj, Saad},
   title={Calcul pseudo-diff\'{e}rentiel et produits crois\'{e}s de $C^*$-alg\`ebres.
   II},
   language={French, with English summary},
   journal={C. R. Acad. Sci. Paris S\'{e}r. I Math.},
   volume={307},
   date={1988},
   number={12},
   pages={663--666},
   issn={0249-6291},
   review={\MR{967808}},
}
\bib{MR1353312}{article}{
   author={Block, Jonathan},
   author={Getzler, Ezra},
   author={Jones, John D. S.},
   title={The cyclic homology of crossed product algebras. II. Topological
   algebras},
   journal={J. Reine Angew. Math.},
   volume={466},
   date={1995},
   pages={19--25},
   issn={0075-4102},
   review={\MR{1353312}},
}
\bib{chak19}{article}{
   author={Chakraborty, Sayan},
   author={Yamashita, Makoto},
   title={Tracing cyclic homology pairings under twisting of graded algebras},
   journal={Letters in Mathematical Physics},
   volume={109},
   date={2019},
   number={7},
   pages={1625--1664},
   issn={1573-0530},
   doi={10.1007/s11005-018-01147-7},
}

\bib{connes80}{article}{
   author={Connes, Alain},
   title={$C\sp{\ast} $ alg\`ebres et g\'{e}om\'{e}trie diff\'{e}rentielle},
   language={French, with English summary},
   journal={C. R. Acad. Sci. Paris S\'{e}r. A-B},
   volume={290},
   date={1980},
   number={13},
   pages={A599--A604},
   issn={0151-0509},
   review={\MR{572645}},
}

\bib{MR605351}{article}{
   author={Connes, A.},
   title={An analogue of the Thom isomorphism for crossed products of a
   $C^{\ast} $-algebra by an action of ${\bf R}$},
   journal={Adv. in Math.},
   volume={39},
   date={1981},
   number={1},
   pages={31--55},
   issn={0001-8708},
   review={\MR{605351}},
   doi={10.1016/0001-8708(81)90056-6},
}

 \bib{MR823176}{article}{
      author={Connes, Alain},
       title={Noncommutative differential geometry},
        date={1985},
        ISSN={0073-8301},
     journal={Inst. Hautes \'Etudes Sci. Publ. Math.},
      volume={62},
       pages={257\ndash 360},
      review={\MR{MR823176 (87i:58162)}},
}

\bib{MR866491}{article}{
   author={Connes, A.},
   title={Cyclic cohomology and the transverse fundamental class of a
   foliation},
   conference={
      title={Geometric methods in operator algebras},
      address={Kyoto},
      date={1983},
   },
   book={
      series={Pitman Res. Notes Math. Ser.},
      volume={123},
      publisher={Longman Sci. Tech., Harlow},
   },
   date={1986},
   pages={52--144},
   review={\MR{866491}},
}

\bib{MR1303779}{book}{
   author={Connes, Alain},
   title={Noncommutative geometry},
   publisher={Academic Press, Inc., San Diego, CA},
   date={1994},
   pages={xiv+661},
   isbn={0-12-185860-X},
   review={\MR{1303779}},
}

\bib{Skan15}{article}{
   author={Debord, Claire},
   author={Skandalis, Georges},
   title={Pseudodifferential extensions and adiabatic deformation of smooth
   groupoid actions},
   journal={Bull. Sci. Math.},
   volume={139},
   date={2015},
   number={7},
   pages={750--776},
   issn={0007-4497},
   review={\MR{3407514}},
   doi={10.1016/j.bulsci.2014.12.001},
}

\bib{Ech10}{article}{
      author={Echterhoff, Siegfried},
      author={L{\"u}ck, Wolfgang},
      author={Phillips, N.~Christopher},
      author={Walters, Samuel},
       title={The structure of crossed products of irrational rotation algebras
  by finite subgroups of {$\mathrm{SL}_2(\mathbb{Z})$}},
        date={2010},
        ISSN={0075-4102},
     journal={J. Reine Angew. Math.},
      volume={639},
       pages={173\ndash 221},
      eprint={\href{http://arxiv.org/abs/math/0609784}{\texttt{arXiv:math/0609784}}},
         url={http://dx.doi.org/10.1515/CRELLE.2010.015},
         doi={10.1515/CRELLE.2010.015},
      review={\MR{2608195 (2011c:46127)}},
}

\bib{MR731772}{incollection}{
      author={Elliott, G.~A.},
       title={On the {$K$}-theory of the {$C\sp{\ast} $}-algebra generated by a
  projective representation of a torsion-free discrete abelian group},
        date={1984},
   booktitle={Operator algebras and group representations, {V}ol. {I}
  ({N}eptun, 1980)},
      series={Monogr. Stud. Math.},
      volume={17},
   publisher={Pitman},
     address={Boston, MA},
       pages={157\ndash 184},
      review={\MR{731772 (85m:46067)}},
}

\bib{MR945014}{article}{
   author={Elliott, G. A.},
   author={Natsume, T.},
   author={Nest, R.},
   title={Cyclic cohomology for one-parameter smooth crossed products},
   journal={Acta Math.},
   volume={160},
   date={1988},
   number={3-4},
   pages={285--305},
   issn={0001-5962},
   review={\MR{945014}},
   doi={10.1007/BF02392278},
}

\bib{Getzler}{article}{
   author={Getzler, Ezra},
   title={Cartan homotopy formulas and the Gauss-Manin connection in cyclic
   homology},
   conference={
      title={Quantum deformations of algebras and their representations},
      address={Ramat-Gan, 1991/1992; Rehovot},
      date={1991/1992},
   },
   book={
      series={Israel Math. Conf. Proc.},
      volume={7},
      publisher={Bar-Ilan Univ., Ramat Gan},
   },
   date={1993},
   pages={65--78},
   review={\MR{1261901}},
}

\bib{Goodwillie}{article}{
    author={Goodwillie, T. G.},
    title={Cyclic homology, derivations, and the free loopspace},
    journal={Topology},
    volume={24},
    date={1985},
    number={2},
    pages={187-215},
    review={\MR{0793184}},
}

\bib{Grensing12}{article}{
   author={Grensing, Martin},
   title={Universal cycles and homological invariants of locally convex
   algebras},
   journal={J. Funct. Anal.},
   volume={263},
   date={2012},
   number={8},
   pages={2170--2204},
   issn={0022-1236},
   review={\MR{2964680}},
   doi={10.1016/j.jfa.2012.06.012},
}

\bib{Higson-Roe:analytic-K}{book}{
   author={Higson, Nigel},
   author={Roe, John},
   title={Analytic $K$-homology},
   series={Oxford Mathematical Monographs},
   note={Oxford Science Publications},
   publisher={Oxford University Press, Oxford},
   date={2000},
   pages={xviii+405},
   isbn={0-19-851176-0},
   review={\MR{1817560}},
}

\bib{Hoer65}{article}{
   author={H\"{o}rmander, Lars},
   title={Pseudo-differential operators},
   journal={Comm. Pure Appl. Math.},
   volume={18},
   date={1965},
   pages={501--517},
   issn={0010-3640},
   review={\MR{0180740}},
   doi={10.1002/cpa.3160180307},
}

\bib{MR1388299}{article}{
   author={Kasparov, G. G.},
   title={$K$-theory, group $C^*$-algebras, and higher signatures
   (conspectus)},
   conference={
      title={Novikov conjectures, index theorems and rigidity, Vol. 1},
      address={Oberwolfach},
      date={1993},
   },
   book={
      series={London Math. Soc. Lecture Note Ser.},
      volume={226},
      publisher={Cambridge Univ. Press, Cambridge},
   },
   date={1995},
   pages={101--146},
   review={\MR{1388299}},
   doi={10.1017/CBO9780511662676.007},
}

\bib{Lein10}{article}{
   author={Lein, Max},
   author={M\u{a}ntoiu, Marius},
   author={Richard, Serge},
   title={Magnetic pseudodifferential operators with coefficients in
   $C^\ast$-algebras},
   journal={Publ. Res. Inst. Math. Sci.},
   volume={46},
   date={2010},
   number={4},
   pages={755--788},
   issn={0034-5318},
   review={\MR{2791006}},
}

\bib{Lesch16}{article}{
   author={Lesch, Matthias},
   author={Moscovici, Henri},
   title={Modular curvature and Morita equivalence},
   journal={Geom. Funct. Anal.},
   volume={26},
   date={2016},
   number={3},
   pages={818--873},
   issn={1016-443X},
   review={\MR{3540454}},
   doi={10.1007/s00039-016-0375-6},
}

\bib{Nesh14}{article}{
   author={Neshveyev, Sergey},
   title={Smooth crossed products of Rieffel's deformations},
   journal={Lett. Math. Phys.},
   volume={104},
   date={2014},
   number={3},
   pages={361--371},
   issn={0377-9017},
   review={\MR{3164614}},
   doi={10.1007/s11005-013-0675-9},
}

\bib{Perrot08}{article}{
   author={Perrot, Denis},
   title={Secondary invariants for Frechet algebras and quasihomomorphisms},
   journal={Doc. Math.},
   volume={13},
   date={2008},
   pages={275--363},
   issn={1431-0635},
   review={\MR{2520476}},
}

\bib{MR911880}{book}{
   author={Phillips, N. Christopher},
   title={Equivariant $K$-theory and freeness of group actions on
   $C^*$-algebras},
   series={Lecture Notes in Mathematics},
   volume={1274},
   publisher={Springer-Verlag, Berlin},
   date={1987},
   pages={viii+371},
   isbn={3-540-18277-2},
   review={\MR{911880}},
   doi={10.1007/BFb0078657},
}

\bib{rieffel-irr-rot-pres}{misc}{
      author={Rieffel, Marc~A.},
       title={Irrational rotation {C$^*$}-algebras},
         how={short communication},
        date={1978},
        note={presented at International Congress of Mathematicians, Helsinki},
}

\bib{Rieffel93}{article}{
   author={Rieffel, Marc A.},
   title={Deformation quantization for actions of ${\R}^d$},
   journal={Mem. Amer. Math. Soc.},
   volume={106},
   date={1993},
   number={506},
   pages={x+93},
   issn={0065-9266},
   review={\MR{1184061}},
   doi={10.1090/memo/0506},
}

\bib{Soll05}{article}{
   author={Solleveld, Maarten},
   title={Some {F}r\'echet algebras for which the {C}hern character is an isomorphism},
   journal={$K$-Theory. An Interdisciplinary Journal for the Development, Application, and Influence of $K$-Theory in the Mathematical Sciences},
   volume={36},
   date={2005},
   number={3-4},
   pages={275--290},
   issn={0920-3036},
   doi={10.1007/s10977-006-7106-y},
}
\bib{tang-yao:K-equi-quan}{article}{
   author={Tang, Xiang},
   author={Yao, Yi-Jun},
   title={$K$-theory of equivariant quantization},
   journal={J. Funct. Anal.},
   volume={266},
   date={2014},
   number={2},
   pages={478--486},
   issn={0022-1236},
   review={\MR{3132719}},
   doi={10.1016/j.jfa.2013.10.005},
}

\bib{Williams07}{book}{
   author={Williams, Dana P.},
   title={Crossed products of $C{^\ast}$-algebras},
   series={Mathematical Surveys and Monographs},
   volume={134},
   publisher={American Mathematical Society, Providence, RI},
   date={2007},
   pages={xvi+528},
   isbn={978-0-8218-4242-3},
   isbn={0-8218-4242-0},
   review={\MR{2288954}},
   doi={10.1090/surv/134},
}

\bib{MR3669113}{article}{
   author={Yashinski, Allan},
   title={The Gauss-Manin connection for the cyclic homology of smooth
   deformations, and noncommutative tori},
   journal={J. Noncommut. Geom.},
   volume={11},
   date={2017},
   number={2},
   pages={581--639},
   issn={1661-6952},
   review={\MR{3669113}},
   doi={10.4171/JNCG/11-2-5},
}

\end{biblist}
\end{bibdiv}

\end{document}